\documentclass[12pt]{amsart}

\title[Every model embeds into its constructible universe]{Every countable model of set theory embeds into its own constructible universe}

\author{Joel David Hamkins}

\address{Mathematics, Philosophy, Computer Science,
         The Graduate Center of The City University of New York,
         365 Fifth Avenue, New York, NY 10016, U.S.A. \&
         Department of Mathematics,
         The College of Staten Island of CUNY,
         Staten Island, NY 10314, U.S.A.}

\email{jhamkins@gc.cuny.edu}
\urladdr{http://jdh.hamkins.org}

\thanks{My research has been supported in part by NSF grant DMS-0800762, PSC-CUNY grant 64732-00-42 and Simons
Foundation grant 209252. In addition, I gratefully acknowledge the generous support provided to me as a visiting fellow in Spring 2012 at the Isaac Newton Institute for Mathematical Sciences in Cambridge, U.K., where most of this research was undertaken. I thank Philip Welch for helpful discussions undertaken whilst strolling the Churchill College grounds in Cambridge and a math-filled excursion to Wells in southern England, with cream tea, a Saxon church and mathematics in Bradford-on-Avon. Thanks also to Ali Enayat, Victoria Gitman and Emil \Jerabek\ for helpful comments.}

\usepackage{amssymb}
\usepackage[figuresright]{rotating}
\usepackage{tikz} 
\usetikzlibrary{arrows}
\usepackage{fullpage}
%
%
%
%
\newtheorem{theorem}{Theorem}
\newtheorem{maintheorem}[theorem]{Main Theorem}
\newtheorem{corollary}[theorem]{Corollary}
\newtheorem{sublemma}{Lemma}[theorem]
\newtheorem{lemma}[theorem]{Lemma}

\newtheorem{question}[theorem]{Question}

\newtheorem{observation}[theorem]{Observation}
\newtheorem{proposition}[theorem]{Proposition}

\newtheorem{definition}[theorem]{Definition}

\newcommand{\QED}{\end{proof}}

\def\proclaim[#1]{{\bf #1}}
\def\BF#1.{{\bf #1.}}

%
%

\newcommand{\Fraisse}{Fra\"\i ss\'e}
\newcommand{\Godel}{G\"odel}

\newcommand{\Jerabek}{Je\v r\'abek}

%
%


\newcommand{\N}{{\mathbb N}}
\renewcommand{\P}{{\mathbb P}}
\newcommand{\Q}{{\mathbb Q}}



%
%

%
%

\newcommand{\of}{\subseteq}

\newcommand{\set}[1]{\{\,{#1}\,\}}
\newcommand{\singleton}[1]{\left\{{#1}\right\}}

\newcommand{\rank}{\mathop{\rm rank}}

\newcommand{\Add}{\mathop{\rm Add}}

\newcommand{\restrict}{\upharpoonright} 
\newcommand{\satisfies}{\models}



\newcommand{\union}{\cup}

\newcommand{\Union}{\bigcup}

\newcommand{\intersect}{\cap}

\newcommand{\trianglelt}{\lhd}

\newcommand{\smalllt}{\mathrel{\mathchoice{\raise2pt\hbox{$\scriptstyle<$}}{\raise1pt\hbox{$\scriptstyle<$}}{\raise0pt\hbox{$\scriptscriptstyle<$}}{\scriptscriptstyle<}}}
\newcommand{\smallleq}{\mathrel{\mathchoice{\raise2pt\hbox{$\scriptstyle\leq$}}{\raise1pt\hbox{$\scriptstyle\leq$}}{\raise1pt\hbox{$\scriptscriptstyle\leq$}}{\scriptscriptstyle\leq}}}

\newcommand{\ltkappa}{{{\smalllt}\kappa}}

\newcommand{\boolval}[1]{\mathopen{\lbrack\!\lbrack}\,#1\,\mathclose{\rbrack\!\rbrack}}
\def\[#1]{\boolval{#1}}
\newbox\gnBoxA
\newdimen\gnCornerHgt
\setbox\gnBoxA=\hbox{\tiny$\ulcorner$}
\global\gnCornerHgt=\ht\gnBoxA
\newdimen\gnArgHgt
\def\gcode #1{%
\setbox\gnBoxA=\hbox{$#1$}%
\gnArgHgt=\ht\gnBoxA%
\ifnum     \gnArgHgt<\gnCornerHgt \gnArgHgt=0pt%
\else \advance \gnArgHgt by -\gnCornerHgt%
\fi \raise\gnArgHgt\hbox{\tiny$\ulcorner$} \box\gnBoxA %
\raise\gnArgHgt\hbox{\tiny$\urcorner$}}
\newcommand{\UnderTilde}[1]{{\setbox1=\hbox{$#1$}\baselineskip=0pt\vtop{\hbox{$#1$}\hbox to\wd1{\hfil$\sim$\hfil}}}{}}
\newcommand{\Undertilde}[1]{{\setbox1=\hbox{$#1$}\baselineskip=0pt\vtop{\hbox{$#1$}\hbox to\wd1{\hfil$\scriptstyle\sim$\hfil}}}{}}
\newcommand{\undertilde}[1]{{\setbox1=\hbox{$#1$}\baselineskip=0pt\vtop{\hbox{$#1$}\hbox to\wd1{\hfil$\scriptscriptstyle\sim$\hfil}}}{}}
\newcommand{\UnderdTilde}[1]{{\setbox1=\hbox{$#1$}\baselineskip=0pt\vtop{\hbox{$#1$}\hbox to\wd1{\hfil$\approx$\hfil}}}{}}
\newcommand{\Underdtilde}[1]{{\setbox1=\hbox{$#1$}\baselineskip=0pt\vtop{\hbox{$#1$}\hbox to\wd1{\hfil\scriptsize$\approx$\hfil}}}{}}

\newcommand{\st}{\mid}
\renewcommand{\th}{{\hbox{\scriptsize th}}}

\newcommand{\Implies}{\mathrel{\Rightarrow}}
\renewcommand{\iff}{\mathrel{\leftrightarrow}}
\newcommand{\Iff}{\mathrel{\Leftrightarrow}}

\newcommand{\iso}{\cong}
\def\<#1>{\langle#1\rangle}

\newcommand{\Ord}{\mathop{{\rm Ord}}}
\newcommand{\No}{\mathop{{\rm No}}} 

\newcommand{\HC}{\mathop{{\rm HC}}}

\newcommand{\ZFC}{{\rm ZFC}}
\newcommand{\ZF}{{\rm ZF}}


\newcommand{\GB}{{\rm GB}}

\newcommand{\KP}{{\rm KP}}

\newcommand{\HOD}{{\rm HOD}}

\newcommand{\HF}{{\rm HF}}

\newcommand{\PA}{{\rm PA}}

%
%

\newcommand{\cell}[1]{\boxit{\hbox to 17pt{\strut\hfil$#1$\hfil}}}
\newcommand{\head}[2]{\lower2pt\vbox{\hbox{\strut\footnotesize\it\hskip3pt#2}\boxit{\cell#1}}}
\newcommand{\boxit}[1]{\setbox4=\hbox{\kern2pt#1\kern2pt}\hbox{\vrule\vbox{\hrule\kern2pt\box4\kern2pt\hrule}\vrule}}
\newcommand{\Col}[3]{\hbox{\vbox{\baselineskip=0pt\parskip=0pt\cell#1\cell#2\cell#3}}}
\newcommand{\tapenames}{\raise 5pt\vbox to .7in{\hbox to .8in{\it\hfill input: \strut}\vfill\hbox to
.8in{\it\hfill scratch: \strut}\vfill\hbox to .8in{\it\hfill output: \strut}}}
\newcommand{\Head}[4]{\lower2pt\vbox{\hbox to25pt{\strut\footnotesize\it\hfill#4\hfill}\boxit{\Col#1#2#3}}}
\newcommand{\Dots}{\raise 5pt\vbox to .7in{\hbox{\ $\cdots$\strut}\vfill\hbox{\ $\cdots$\strut}\vfill\hbox{\
$\cdots$\strut}}}
%
%
%
%
\newcommand{\df}{\it} 
\hyphenation{su-per-com-pact-ness}\hyphenation{La-ver}

\newcommand{\toward}{\rightharpoonup}
\newcommand{\ttoward}{\twoheadrightarrow}
\newcommand{\proj}{\mathrel{\;\lower2pt\hbox{\turnbox{45}{$\leftharpoonup$}}\;\;}}

\newcommand{\Hg}{\mathop{{\rm Hg}}}

\begin{document}

\maketitle

\begin{abstract}
The main theorem of this article is that every countable model of set theory $\<M,{\in^M}>$, including every well-founded model, is isomorphic to a submodel of its own constructible universe $\<L^M,{\in^M}>$ by means of an embedding $j:M\to L^M$. It follows from the proof that the countable models of set theory are linearly pre-ordered by embeddability: if $\<M,{\in^M}>$ and $\<N,{\in^N}>$ are countable models of set theory, then either $M$ is isomorphic to a submodel of $N$ or conversely. Indeed, these models are pre-well-ordered by embeddability in order-type exactly $\omega_1+1$. Specifically, the countable well-founded models are ordered under embeddability exactly in accordance with the heights of their ordinals; every shorter model embeds into every taller model; every model of set theory $M$ is universal for all countable well-founded binary relations of rank at most $\Ord^M$; and every ill-founded model of set theory is universal for all countable acyclic binary relations. Finally, strengthening a classical theorem of Ressayre, the proof method shows that if $M$ is any nonstandard model of \PA, then every countable model of set theory---in particular, every model of \ZFC\ plus large cardinals---is isomorphic to a submodel of the hereditarily finite sets $\<\HF^M,{\in^M}>$ of $M$. Indeed, $\<\HF^M,{\in^M}>$ is universal for all countable acyclic
binary relations.
\end{abstract}


\section{Introduction}

In this article, I shall prove that every countable model of set theory $\<M,{\in^M}>$, including every well-founded model, is isomorphic to a submodel of its own constructible universe $\<L^M,{\in^M}>$. In other words, there is an embedding
$j:\<M,{\in^M}>\to \<L^M,{\in^M}>$.
$$\hbox{\begin{tikzpicture}[xscale=.07,yscale=.3]
 \draw (-0,0) --(12,5) --(-12,5) --(0,0);
 \draw[dotted] (0,0) --(9,6);
 \draw[dotted] (0,0) --(-9,6);
 \node[anchor=south west] at (-1,5) {$L^M$};
 \draw (0,1) --(1,2) --(-1,2) --(0,1);
 \draw (1.4,2.4) --(2.1,3.1) --(-2.1,3.1) --(-1.4,2.4) --(1.4,2.4);
 \draw (2.5,3.5) --(3,4) --(-3,4) --(-2.5,3.5) --(2.5,3.5);
 \draw (3.5,4.5) --(4,5) --(-4,5) --(-3.5,4.5) --(3.5,4.5);
 \draw[->] (-8,3.33) to [out=190,in=150] (-10,2.5) to [out=-20,in=190] (-1.8,2.8);
 \node[anchor=north east] at (-9,2.5) {$j$};
 \node[anchor=north west] at (8,4) {$M$};
\end{tikzpicture}}\qquad\quad\raise 25pt\hbox{$x\in y\ \longleftrightarrow\ j(x)\in j(y)$}$$

\begin{maintheorem}\label{MainTheorem.MsubmodelL^M}
 Every countable model of set theory $\<M,{\in^M}>$ is isomorphic to a submodel of its own constructible universe $\<L^M,{\in^M}>$.
\end{maintheorem}

The proof uses universal digraph combinatorics, including an acyclic version of the countable random digraph, which I call the countable random $\Q$-graded digraph, and higher analogues arising as uncountable \Fraisse\ limits, leading eventually to what I call the hypnagogic digraph, a set-homogeneous, class-universal, surreal-numbers-graded acyclic class digraph, which is closely connected with the surreal numbers. The proof shows that $\<L^M,{\in^M}>$ contains a submodel that is a universal acyclic digraph of rank $\Ord^M$, and so in fact this model is universal for all countable acyclic binary relations of this rank. When $M$ is ill-founded, this includes all acyclic binary relations. The method of proof also establishes the following theorem, thereby answering a question posed by Ewan Delanoy \cite{MSE64095Delanoy:ComparingCountableModelsOfZFC}.

\begin{maintheorem}\label{MainTheorem.CountableModelsLinearlyOrdered}
 The countable models of set theory are linearly pre-ordered by embeddability: if $\<M,{\in^M}>$ and $\<N,{\in^N}>$ are countable models of set theory, then either $M$ is isomorphic to a submodel of $N$ or conversely. Indeed, the countable models of set theory are pre-well-ordered by embeddability in order type exactly $\omega_1+1$.
\end{maintheorem}

The proof shows that the embeddability relation on the countable models of set theory conforms with their ordinal heights, in that any two countable models with the same ordinals are bi-embeddable; any shorter model embeds into any taller model; and the ill-founded models are all bi-embeddable and universal.

The proof method arises most easily in {\it finite} set theory, showing that the nonstandard hereditarily finite sets $\HF^M$ coded in any nonstandard model $M$ of \PA\ or even of $I\Delta_0$ are similarly universal for all acyclic binary relations. This strengthens a classical theorem of Ressayre, while simplifying the proof, replacing a partial saturation and resplendency argument with a soft appeal to graph universality.

\begin{maintheorem}\label{MainTheorem.NonstandardModelsOfFiniteSetTheoryAreUniversal}
If $M$ is any nonstandard model of \PA, then every countable model of set theory is isomorphic to a
submodel of the hereditarily finite sets $\<\HF^M,{\in^M}>$ of $M$. Indeed, $\<\HF^M,{\in^M}>$ is universal for all countable acyclic
binary relations.
\end{maintheorem}

In particular, every countable model of \ZFC\ arises as a submodel of $\<\HF^M,{\in^M}>$. Thus, inside any nonstandard model of finite set theory, we may cast out some of the finite sets and thereby arrive at a copy of any desired model of infinite set theory, having infinite sets, uncountable sets or even large cardinals of whatever type we like.

A structure $M$ is {\df universal} for a class $\Delta$ of structures, if every structure in $\Delta$ is isomorphic to a substructure of $M$. In this article I use the term {\df model of set theory} to mean a first-order structure $\<M,{\in^M}>$ satisfying at least the Kripke-Platek \KP\ axioms of set theory, a very weak fragment of \ZF. Although typically for the models of set theory I have in mind the models of \ZFC\ or even \ZFC\ plus large cardinals, it turns out that the results of this article go through for the much weaker theory \KP, and indeed still weaker theories will suffice, but for definiteness I shall use \KP, leaving the determination of how weak we may go for a later project.

The language of set theory has only the set-membership relation $\in$, and so a {\df submodel} of a model $\<M,{\in^M}>$ of set theory is simply a subset $N\of M$, where one restricts the relation to form the structure \hbox{$\<N,{\in^M}\restrict N>$}. An {\df embedding} of one model $\<M,{\in^M}>$ into another $\<N,{\in^N}>$ is an isomorphism of $M$ with a submodel of $N$, that is, a function $j:M\to N$ for which $x\in^M y$ if and only if $j(x)\in^N j(y)$. Since a model of set theory is a set with a binary relation, it is technically a certain special kind of directed graph, and many of the arguments of this article will proceed from this graph-theoretic perspective. For example, a submodel of a model of set theory, viewed as a directed graph, is just an induced subgraph.

The three main theorems reappear in this article as theorems \ref{Theorem.MisSubmodelOfL^M}, \ref{Theorem.CountableModelsLinearlyOrdered} and \ref{Theorem.NonstandardModelsAreUniversal}, respectively.

\section{The countable random $\Q$-graded digraph}

The main theorems will be proved by finding copies of certain universal digraphs among the submodels of the models of set theory under consideration. So let me begin by developing a little of this universal digraph theory. A {\df digraph}, or directed graph, is a structure $\<G,{\toward}>$ where $G$ is a set of vertices, or nodes, and $\toward$ is a binary relation on $G$. A digraph is {\df acyclic} if there is no finite directed path from a vertex to itself. That is, an acyclic digraph is one with no directed cycles. Note that the undirected version of an acyclic digraph may be far from a tree, since there can be undirected cycles.

Let $\Q$ be the endless dense linear order of the rational numbers. A digraph $G$ is {\df $\Q$-graded} if every vertex $a$ in $G$ is assigned a rational value $q_a$ in such a way that $q_a<q_b$ whenever $a\toward b$. Such a graph must be acyclic, since the values increase along any directed path. The grading of a digraph is in effect a laying-out of its vertices on levels, in such a way that every directed edge points from a lower-level node to a higher-level node. Similarly, for any linear order $\ell$ we may consider the $\ell$-graded digraphs, with values in $\ell$, and these also are acyclic.

We may similarly grade an acyclic digraph using only relative grading values, defining that a {\df graded digraph} is a first-order structure of the form $\<G,{\toward},{\leq}>$, where $\<G,\toward>$ is a digraph and $\leq$ is a linear pre-order on the vertices whose corresponding strict order $<$ includes the edge relation, so that $a\toward b$ implies $a<b$. Every such structure can naturally be regarded as an $\ell$-graded digraph, where $\ell$ is the induced linear order on the equivalence classes of $\leq$, determined by the relation $a\equiv b\Iff a\leq b\leq a$. The graded digraph $\<G,\toward,\leq>$ is said to be {\df strictly} graded when $\leq$ is a linear order, rather than merely a pre-order, and similarly, an $\ell$-graded digraph is strictly $\ell$-graded when it has at most one node of any given value.

\begin{lemma}\label{Lemma.CycleFreeImpliesQgraded}
 Every acyclic digraph can be graded, and indeed, strictly graded. Every countable acyclic digraph can be strictly $\Q$-graded.
\end{lemma}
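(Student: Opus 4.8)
The plan is to reduce both claims to two classical facts about linear orders: the order-extension principle (Szpilrajn's theorem), and Cantor's theorem that $\<\Q,{<}>$ is universal for countable linear orders.

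First I would note that acyclicity is exactly what forces the transitive closure of the edge relation to be a strict partial order. Given an acyclic digraph $\<G,{\toward}>$, let $\toward^+$ denote its transitive closure, so that $a\toward^+ b$ holds precisely when there is a finite directed path from $a$ to $b$ in $G$. This relation is transitive by construction, and it is irreflexive exactly because $G$ has no directed cycle --- here one uses the full strength of acyclicity, not merely the absence of loops. Hence $\toward^+$ is a strict partial order on $G$ extending $\toward$. By the order-extension principle, $\toward^+$ extends further to a strict linear order $<$ on $G$; letting $\leq$ be the associated non-strict order, the structure $\<G,{\toward},{\leq}>$ is then a graded digraph whose grading relation is a genuine linear order. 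So $\<G,{\toward}>$ is strictly graded, and in particular graded, which gives the first two assertions.

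For the third assertion, assume in addition that $G$ is countable. Then $\<G,{<}>$ is a countable linear order, so by Cantor's theorem there is an embedding $a\mapsto q_a$ of $\<G,{<}>$ into $\<\Q,{<}>$ as linear orders; in particular this map is injective and $a<b$ implies $q_a<q_b$. Since $<$ extends $\toward^+$, which in turn extends $\toward$, we obtain $q_a<q_b$ whenever $a\toward b$; thus $a\mapsto q_a$ is a $\Q$-grading of $\<G,{\toward}>$, and it is a strict one because distinct vertices are assigned distinct rationals.

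There is no genuine obstacle here --- the argument is entirely soft --- and the only points needing a moment's care are the irreflexivity of $\toward^+$ and the bookkeeping separating ``graded'' from ``strictly graded.'' If one prefers to avoid the order-extension principle in the countable case, one can instead grade $\<G,{\toward}>$ directly by recursion: fix an enumeration $G=\set{a_0,a_1,\dots}$ and, at stage $n$, assign to $a_n$ any rational that is strictly above every previously assigned $q_{a_i}$ with $a_i\toward^+ a_n$, strictly below every previously assigned $q_{a_j}$ with $a_n\toward^+ a_j$, and distinct from the finitely many rationals already used. Such a choice is available because the relevant lower set lies entirely below the relevant upper set --- a vertex below and a vertex above $a_n$ in $\toward^+$ are joined by a directed path, hence were already assigned values in the correct order by the induction hypothesis --- and because $\Q$ is dense without endpoints; an easy induction then confirms that the resulting assignment is an injective strict $\Q$-grading.
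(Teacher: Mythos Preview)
Your proof is correct and follows essentially the same route as the paper: take the transitive closure of the edge relation to obtain a strict partial order, extend it to a linear order via the order-extension principle, and in the countable case embed that linear order into $\Q$ by Cantor's theorem. The alternative recursive construction you sketch at the end is a pleasant addition not present in the paper, but the main argument matches the paper's proof almost line for line.
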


\begin{proof}
Suppose that $G$ is an acyclic digraph. Let $\trianglelt$ be the reachability relation on $G$, the transitive closure of the edge relation of $G$. Because $G$ is acyclic, it follows that $\trianglelt$ is a partial order on the vertices. Since every partial order extends to a
linear order, there is a linear order relation $\leq$ on $G$ such that $a\toward b\Implies a\leq b$. So we have found the desired strict grading $\<G,\toward,\leq>$. Since $\Q$ is universal for countable linear orders, if $G$ is countable then we may find a copy of the linear order $\leq$ in $\Q$ and thereby injectively assign the vertices of $G$ to rational numbers in such a way that $a\trianglelt b$ implies $q_a<q_b$. In particular, this is a strict $\Q$-grading of $G$.
\end{proof}

My interest in graded digraphs arises from the observation that every model of set theory $\<M,{\in}^M>$ is an acyclic digraph with a natural grading defined by $a\leq b\Iff\rank(a)\leq\rank(b)$, where $\rank(a)$ is the von Neumann rank, the smallest ordinal $\alpha$ for which $a\in V_{\alpha+1}^M$. This is a grading because $a\in b$ implies $\rank(a)<\rank(b)$. Thus, every model of set theory $M$ is naturally an $\Ord^M$-graded digraph.

\begin{observation}\label{Observation.ModelOfSetTheoryAsOrdGradedDigraph}
Every model of set theory  $\<M,{\in^M}>$ is an $\Ord^M$-graded digraph, using von Neumann rank as values.
\end{observation}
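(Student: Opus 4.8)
The plan is to unpack the definition of an $\ell$-graded digraph and verify that the von Neumann rank provides exactly such a structure on $\<M,{\in^M}>$, taking $\ell=\Ord^M$. Recall that a digraph is $\ell$-graded, for a linear order $\ell$, when each vertex $a$ carries a value $q_a\in\ell$ with $a\toward b\Implies q_a<q_b$. The digraph in question is $\<M,{\in^M}>$ itself, which is literally a set with a binary relation; the linear order is $\Ord^M$, the ordinals of $M$ as $M$ sees them; and the value attached to a vertex $a\in M$ is $q_a=\rank^M(a)$, the ordinal that $M$ believes is the von Neumann rank of $a$. Note that $\Ord^M$ is a genuine linear order from the external standpoint even when $M$ is ill-founded, since linearity of $\Ord$ is a first-order assertion holding in $M$; so the notion of an $\Ord^M$-graded digraph is meaningful in every case.

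First I would confirm that this value assignment is everywhere defined. Inside $M$ the rank function is given by $\in$-recursion, $\rank(a)=\sup\set{\rank(b)+1\mid b\in a}$, and even the weak theory $\KP$ proves that $\Sigma$-recursion along membership yields total solutions, so that $\KP$ proves every set has an ordinal rank; equivalently, $\KP$ proves that the cumulative hierarchy $\<V_\alpha>_{\alpha\in\Ord}$ is defined and that every set lies in some $V_{\alpha+1}$. Hence $M$ correctly assigns to each element $a$ an ordinal $\rank^M(a)\in\Ord^M$, and $q_a$ is well-defined.

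Next I would check the grading inequality. Arguing inside $M$: if $a\in b$ then $\rank(a)+1$ is among the ordinals over which the supremum defining $\rank(b)$ is taken, so $\rank(a)<\rank(a)+1\leq\rank(b)$. Externalizing, $a\in^M b$ implies that $\rank^M(a)$ precedes $\rank^M(b)$ in $\Ord^M$, which is precisely the defining condition for an $\Ord^M$-grading. This completes the argument; the grading is in general not strict, since $M$ has many elements of each successor rank.

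There is essentially no obstacle here. The only step that is not completely formal is the appeal to $\KP$-provable $\Sigma$-recursion to know that the rank function is total and ordinal-valued, which is a standard fact about $\KP$; and one should not overlook the small point, already noted above, that $\Ord^M$ remains externally linear for ill-founded $M$, so that the conclusion retains its meaning in that case too.
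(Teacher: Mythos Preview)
Your proposal is correct and follows the same approach as the paper, which treats the observation as essentially self-evident: the paper simply notes in the paragraph preceding the observation that $a\in b$ implies $\rank(a)<\rank(b)$, so the von Neumann rank furnishes an $\Ord^M$-grading. Your write-up is more careful---explicitly verifying that the rank function is total in $\KP$ and that $\Ord^M$ is externally linear even for ill-founded $M$---but the underlying idea is identical.
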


Since these graded digraphs are never strict---and they are the relevant digraphs to consider for the main theorem---we shall henceforth in this article be more interested in non-strict gradings. In particular, the universal graded digraphs constructed later in this article will have infinitely many nodes of each value.

A structure $A$ is {\df homogeneous} if every isomorphism of finitely generated substructures of $A$ extends to an automorphism of $A$.

\begin{theorem}\label{Theorem.TheCountableRandomQgradedDigraph}
There is a countable homogeneous $\Q$-graded digraph $\Gamma$, which is universal for all countable $\Q$-graded digraphs. Furthermore,
 \begin{enumerate}
  \item There is a unique such digraph up to isomorphism, even when restricting universality to finite $\Q$-graded digraphs.
  \item Every countable acyclic digraph arises as an isomorphic copy of an induced subgraph of $\Gamma$.
  \item $\Gamma$ admits a computable presentation.
 \end{enumerate}
\end{theorem}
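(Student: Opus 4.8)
The plan is to obtain $\Gamma$ as a \Fraisse\ limit. I regard a $\Q$-graded digraph as a first-order structure carrying the edge relation $\toward$ together with the grading — say, as a unary predicate for each rational value, or equivalently as a map to the fixed countable linear order $\Q$ — so that an embedding of $\Q$-graded digraphs is a value-preserving induced-subgraph embedding. Let $\mathcal{K}$ be the class of all \emph{finite} $\Q$-graded digraphs. The first step is to check that $\mathcal{K}$ is a \Fraisse\ class. It is hereditary, since an induced subgraph of a finite $\Q$-graded digraph, with the inherited values, is again one. It has the joint embedding property, witnessed by disjoint union with no edges across the two parts. It has the amalgamation property, indeed free amalgamation: if $A\of B$ and $A\of C$ lie in $\mathcal{K}$ with $B\intersect C=A$, then the structure on $B\union C$ whose edge relation is the union of those of $B$ and $C$ and whose grading is their common one is again a finite $\Q$-graded digraph, because every edge still points from a smaller rational value to a larger one and hence no directed cycle is created. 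And $\mathcal{K}$ has only countably many isomorphism types, a finite $\Q$-graded digraph being determined up to isomorphism by a finite multiset of rationals together with an edge pattern among them. By \Fraisse's theorem there is then a unique countable homogeneous structure $\Gamma$ whose age — the class of finite $\Q$-graded digraphs embeddable into $\Gamma$ — is exactly $\mathcal{K}$; equivalently, $\Gamma$ is the countable $\Q$-graded digraph with the one-point extension property: whenever $A$ is a finite induced subgraph of $\Gamma$ and $A\of A\union\{b\}$ is a one-point extension in $\mathcal{K}$ (so $b$ has been assigned a rational value and edges to $A$ respecting the grading), some vertex of $\Gamma$ realizes this configuration over $A$. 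Being homogeneous with age $\mathcal{K}$, this $\Gamma$ is already universal for all finite $\Q$-graded digraphs.

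Universality for \emph{all} countable $\Q$-graded digraphs follows by the usual step-by-step argument. Given such a digraph $H$, enumerate its vertices as $h_0,h_1,\dots$ and build an increasing chain of finite partial isomorphisms $\sigma_n\colon\{h_0,\dots,h_{n-1}\}\to\Gamma$: at stage $n$ the induced subgraph of $H$ on $\{h_0,\dots,h_n\}$ is a one-point extension in $\mathcal{K}$ of the one on $\{h_0,\dots,h_{n-1}\}$, so the extension property of $\Gamma$ supplies an image for $h_n$ extending $\sigma_{n-1}$; the union $\bigcup_n\sigma_n$ embeds $H$ into $\Gamma$. This proves the main assertion. Applying the extension property with $A=\emptyset$, and then with $A$ the set of vertices of a fixed value produced so far and $b$ joined to none of them, shows moreover that every rational value occurs in $\Gamma$ and occurs infinitely often, so the grading of $\Gamma$ is non-strict, as desired for later use.

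The three supplementary claims now come quickly. For (1), if $\Gamma'$ is any countable homogeneous $\Q$-graded digraph universal merely for the \emph{finite} $\Q$-graded digraphs, then $\mathcal{K}$ is contained in the age of $\Gamma'$ by that universality, while the age of $\Gamma'$ is automatically contained in $\mathcal{K}$, since every finite induced subgraph of $\Gamma'$ is a finite $\Q$-graded digraph; so $\Gamma'$ and $\Gamma$ have the same age, and two countable homogeneous structures with the same age are isomorphic by back-and-forth using the one-point extension property on each side, whence $\Gamma'\iso\Gamma$. For (2), let $D$ be a countable acyclic digraph; by Lemma~\ref{Lemma.CycleFreeImpliesQgraded} it admits a strict $\Q$-grading, so $D$ is the underlying digraph of a countable $\Q$-graded digraph, which by the universality just proved embeds into $\Gamma$ by a value-preserving digraph embedding, and the image is an induced subgraph of $\Gamma$ isomorphic to $D$. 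For (3), the construction is effective: membership in $\mathcal{K}$ is decidable, $\mathcal{K}$ is computably enumerable, and adjoining one new vertex realizing a prescribed one-point extension type is a computable operation; so the standard stagewise construction of the \Fraisse\ limit — maintaining a finite $\Q$-graded digraph and, in a fixed computable bookkeeping order, successively adjoining a vertex realizing the next pending one-point extension over a previously-built finite subset — produces a computable presentation of $\Gamma$.

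I do not expect a serious obstacle: the theorem is essentially an instance of \Fraisse's construction. The one step that is genuine content rather than bookkeeping is the amalgamation property, which holds here because free amalgamation of finite $\Q$-graded digraphs neither creates a directed cycle nor disturbs the grading; and for (3) one must observe that the one-point extension types over a given finite subset — of which there are infinitely many, since the new vertex may receive any rational value — can nonetheless be enumerated so that each is eventually realized, which is exactly what pins down the isomorphism type of the limit.
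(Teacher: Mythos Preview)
Your proof is correct and follows essentially the same \Fraisse-limit approach as the paper's primary construction; the only minor technical difference is that you work directly with $\Q$-graded digraphs (encoding values as unary predicates), whereas the paper first takes the \Fraisse\ limit of finite \emph{graded} digraphs carrying only the relative pre-order and then observes that the resulting order is countable dense without endpoints, hence identifiable with $\Q$. The paper also supplies forcing, explicit stagewise, and probabilistic constructions of the same object, but these are supplementary rather than a different route.
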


Let us call this $\Gamma$ the \emph{countable random $\Q$-graded digraph}.

\begin{proof}
There are several independent constructions of this highly canonical object. Let me begin with the abstract realization of $\Gamma$ as a
\Fraisse\ limit. Let $\mathcal{Q}$ be the collection of finite graded digraphs. Note that $\mathcal{Q}$ enjoys the {\df hereditary} property, that any induced subgraph of a graph in $\mathcal{Q}$ is in $\mathcal{Q}$, and also the {\df joint embedding} property, asserting that any two members of $\mathcal{Q}$ embed into a third. Furthermore, $\mathcal{Q}$ has the {\df amalgamation} property, meaning that whenever $A$, $B$ and $C$ are in $\mathcal{Q}$, where $A$ embeds into $B$ and also into $C$, then there is a $D$ into which both $B$ and $C$ embed while agreeing on the image of $A$. These three properties are sufficient that $\mathcal{Q}$ has a \Fraisse\ limit $\Gamma$, a countable, homogeneous, directed graded graph, whose finite induced graded subdigraphs are up to isomorphism precisely the graded digraphs in $\mathcal{Q}$ (see \cite[thm\,7.1.2]{Hodges1993:ModelTheory}). The universal property of the \Fraisse\ limit ensures that $\Gamma$ is universal for all countable graded digraphs, since any such graph $G$ is the direct limit of its finite induced subgraphs, which are all in $\mathcal{Q}$, and one may therefore amalgamate the limit construction of $G$ as occurring inside $\Gamma$. Note that the pre-order on $\Gamma$ will be a countable dense linear pre-order, and so we may regard $\Gamma$ as $\Q$-graded. Furthermore, as a $\Q$-graded digraph, a simple homogeneity argument shows that $\Gamma$ remains universal for all countable $\Q$-graded digraphs. That is, we may in the universal property assume that the embeddings respect not only the relative grading, but also respect the absolute grading value in $\Q$ that we have just fixed. In particular, any countable homogeneous graded digraph universal for all finite graded digraphs can be assigned a $\Q$-grading for which it is also universal for all countable $\Q$-graded digraphs in this stronger sense, and furthermore it is homogeneous not only as a graded digraph but as a $\Q$-graded digraph, where the automorphisms respect the grading values.

Consider alternatively the following construction of $\Gamma$ via forcing, which amounts essentially to the forcing construction of the \Fraisse\ limit. Let $\P$ be the partial order of all finite $\Q$-graded digraphs with nodes coming from a fixed countably infinite set, ordering the digraphs of $\P$ by the induced subgraph relation. If $A\of B$ and $C$ are any such finite $\Q$-graded digraphs, with an embedding $f:A\to C$, then let $D_{A,B,C,f}$ be the set of graphs $H\in\P$ such that if $C\of H$, then $f$ extends to an embedding $g:B\to H$. This is a dense collection of conditions $H$, since for any $H_0\in \P$, if $C\of H_0$ then we can extend $H_0$ to an $H$ that includes a copy of $B$ over $f(A)$, and thus extend $f$ to $g:B\to H$, as desired. Since there are only countably many such dense sets, we may build a chain of graphs $H_0\of H_1\of \cdots$ such that $\Gamma=\Union_n H_n$ meets every such dense set. It follows that $\Gamma$ is {\df weakly homogeneous}, which means that whenever $A\of B$ and $f:A\to \Gamma$, then there is an extension to $g:B\to \Gamma$. This implies that $\Gamma$ is universal for all countable $\Q$-graded digraphs, since any such graph $G$ is the union $\Union_n G_n$ of finite graphs, and weak homogeneity allows us to build up a chain of embeddings $f_n:G_n\to \Gamma$, which realizes $G$ as an induced subgraph of $\Gamma$. It also implies that $\Gamma$ is fully homogeneous, by a similar back-and-forth argument that systematically extends a finite partial isomorphism to a full automorphism of $\Gamma$.

Let me now describe a more concrete construction, which will realize $\Gamma$ as a computable graded digraph. I shall build $\Gamma$ as the union of a sequence of finite graded digraphs $\Gamma_n\in Q$. Namely, let $\Gamma_0$ be the empty graph, and at each
stage $n$, consider all possible ways (up to isomorphism) to extend $\Gamma_n$ by the addition of one vertex---there are only finitely many possibilities---and let $\Gamma_{n+1}$ have points realizing all such types of connectivity patterns and grading relations with respect to the points in $\Gamma_n$. Since we can carry out this process by a computational procedure, the limit $\Gamma=\Union_{n\in\omega}\Gamma_n$ is a computable graded digraph. Furthermore, $\Gamma$ is clearly universal by the ``forth'' construction: given any countable graded digraph $G$, we enumerate the vertices of $G$ as $v_0$, $v_1$, and so on, and at stage $n$ we have a copy of the vertices of $G$ before $a_n$ inside $\Gamma_n$. The next vertex $v_n$ has a certain type of relation to the previous vertices with respect to the edge and relative value relations, and we added a point to $\Gamma_{n+1}$ exhibiting precisely that pattern of connectivity and grading relation to the corresponding images of those points. Thus, we may extend our embedding one more step and ultimately construct an embedding of $G$ into $\Gamma$, as desired.

%
%
%
Finally, let me give a probabilistic account of $\Gamma$. Let $\Gamma$ have infinitely many vertices of every rational value, and for each pair of nodes $a$ and $b$ with $a$ having lower value than $b$, flip a coin to determine whether we place an edge between them. Thus, $\Gamma$ is the random $\Q$-graded digraph. With probability one, every finite $\Q$-graded digraph arises as an induced sub graph, and
furthermore, any particular finite subgraph $A$ will find with probability one an extension inside $\Gamma$ realizing any particular
pattern of connectivity for a new vertex having a particular value (subject to the constraints on values). Thus, almost surely the
resulting graph $\Gamma$ will exhibit the finite-pattern property that allows us to extend any partial isomorphism via a back-and-forth argument to a full automorphism.
\end{proof}

The need to consider graded digraphs, rather than mere digraphs, arises from the fact that there simply are no nontrivial homogeneous acyclic digraphs:

\begin{observation}\label{Observation.NoHomogeneousDigraphWithoutGrading}
 If an acyclic digraph $G$ has $a\toward b\toward c$ as an induced subgraph, then it is not homogeneous as a digraph (without any grading structure).
\end{observation}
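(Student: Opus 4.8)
The plan is to argue by contradiction, exploiting that the two‑element set $\{a,c\}$ looks like a bare pair of isolated vertices inside $G$, even though $G$ secretly distinguishes $a$ from $c$ via reachability.

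Suppose toward contradiction that $G$ is homogeneous. First I would observe that the induced subgraph of $G$ on $\{a,c\}$ has no edges at all: there is no edge $a\toward c$, since $a\toward b\toward c$ is an induced subgraph; and there is no edge $c\toward a$, since together with $a\toward b\toward c$ this would form a directed path from $a$ to itself, contrary to acyclicity. Hence the transposition $\tau$ of $\{a,c\}$ swapping $a$ and $c$ is an isomorphism of this (edgeless) finitely generated induced subgraph onto itself. By homogeneity, $\tau$ extends to an automorphism $\Phi\in\Aut(G)$ with $\Phi(a)=c$ and $\Phi(c)=a$.

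To finish, I would use the standard fact that any automorphism of a digraph carries directed paths to directed paths, and hence preserves reachability, the transitive closure of $\toward$. Since $a\toward b\toward c$ is a directed path, $c$ is reachable from $a$; applying $\Phi$, we get that $\Phi(c)=a$ is reachable from $\Phi(a)=c$, so there is a directed path from $c$ to $a$ in $G$. Concatenating such a path with $a\toward b\toward c$ produces a directed path from $a$ to itself, contradicting the acyclicity of $G$. Therefore $G$ is not homogeneous as a digraph.

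I do not anticipate any real obstacle in this argument; its entire content is the remark that $\{a,c\}$ is symmetric as an abstract digraph but is distinguished by reachability in $G$. The only points needing a line of justification are the two ``no edge'' claims for $\{a,c\}$ and the fact that automorphisms carry directed paths to directed paths, both of which are immediate from the definitions.
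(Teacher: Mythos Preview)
Your argument is correct and follows essentially the same approach as the paper: observe that $\{a,c\}$ is an edgeless induced subgraph, extend the transposition to a full automorphism, and derive a directed cycle. The only cosmetic difference is that the paper exhibits the cycle explicitly as $a\toward b\toward c=\pi(a)\toward\pi(b)\toward\pi(c)=a$, whereas you phrase the same step via preservation of reachability; and note that your separate argument ruling out $c\toward a$ via acyclicity is redundant, since the induced-subgraph hypothesis already excludes that edge.
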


\begin{proof}
Suppose $G$ is an acyclic digraph with $a\toward b\toward c$ as an induced subgraph, meaning here that $a$ and $c$ share no edge. It
follows that the subgraph $\{a,c\}$ is discrete and therefore has an automorphism swapping $a$ and $c$. But this finite automorphism cannot
extend to an automorphism $\pi$ of the whole graph, since this would cause a directed cycle: $a\toward b\toward c=\pi(a)\toward
\pi(b)\toward \pi(c)= a$, a contradiction.
\end{proof}

Essentially the same point is made by observing that the collection of acyclic digraphs does not enjoy the amalgamation property, since the discrete graph with two vertices embeds into $a\toward b\toward c$ in two ways, but these cannot be amalgamated in any acyclic graph, since any amalgamation would give rise to a directed cycle.

The purpose of using graded digraphs is to overcome the obstacle posed by observation \ref{Observation.NoHomogeneousDigraphWithoutGrading} and enable the theory of universality and homogeneity in the context of acyclic digraphs. The situation here is that the countable random $\Q$-graded digraph is homogeneous as an $\Q$-graded digraph, where the automorphisms respect the edge relation and the values of the $\Q$-grading, but not as an unlabeled digraph, where the automorphism need only respect the edge relation. Note that for $a\toward b\toward c$ in the case of an $\Q$-graded graph, the value of $a$ must be strictly smaller than the value of $c$, which exactly prevents us in swapping $a$ and $c$ when we must preserve the values. Meanwhile, if one forgets both the values and the direction of edges, then in fact the undirected graph underlying the countable random $\Q$-graded digraph is the same as the countable random graph.

\begin{observation}\label{Observation.CountableRandomDigraphIsCountableRandomGraph}
 The graph obtained by ignoring the values and direction of the edges in the countable random $\Q$-graded digraph is isomorphic to the countable random graph.
\end{observation}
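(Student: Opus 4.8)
The claim is that the undirected reduct of the countable random $\Q$-graded digraph $\Gamma$ — obtained by forgetting the grading values and the orientation of edges — is isomorphic to the countable random graph $R$. The plan is to verify that this reduct satisfies the well-known extension property (the \emph{finite-pattern} or \emph{alice's restaurant} property) that characterizes $R$ up to isomorphism among countable graphs: for any two disjoint finite sets of vertices $U$ and $W$, there is a vertex $z$ adjacent to every vertex of $U$ and to no vertex of $W$. Since the reduct is clearly countable, establishing this extension property suffices, because any two countable graphs with the extension property are isomorphic by a back-and-forth argument.

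First I would recall, using the probabilistic or \Fraisse-limit description from Theorem~\ref{Theorem.TheCountableRandomQgradedDigraph}, that $\Gamma$ has infinitely many vertices of each rational value and realizes every finite connectivity-and-grading pattern over any finite subgraph. Given disjoint finite $U,W\of\Gamma$, I would pick a rational number $q$ strictly larger than all the grading values appearing in $U\cup W$ — this is possible since $\Q$ has no largest element and $U\cup W$ is finite. Now consider the finite graded digraph on $U\cup W$ together with a single new node $z$ of value $q$, where we direct an edge from each $u\in U$ to $z$ (legitimate, since $q_u<q$) and put no edge between $z$ and any $w\in W$. This is a finite $\Q$-graded digraph extending the induced subgraph on $U\cup W$ by one vertex, so by the extension property of $\Gamma$ there is an actual vertex $z\in\Gamma$ realizing exactly this pattern. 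In the undirected reduct, $z$ is then adjacent to precisely the vertices of $U$ among $U\cup W$, which is what the extension property for graphs demands.

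Having verified the extension property, I would conclude by invoking the standard categoricity of the countable random graph: any countable graph satisfying the extension property is isomorphic to $R$ via the usual back-and-forth construction, alternately extending a finite partial isomorphism by one vertex on each side. Hence the undirected reduct of $\Gamma$ is isomorphic to the countable random graph.

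I do not expect any serious obstacle here; the only point requiring a moment's care is the choice of the value $q$ for the new vertex. One might worry that to make $z$ adjacent to all of $U$ and none of $W$ in the \emph{undirected} sense, the orientation of the edges and the position of $q$ relative to the values in $U\cup W$ could interfere — but taking $q$ above everything lets all the desired edges point uniformly from $U$ into $z$, so there is no tension. (Equally, one could place $q$ below everything and orient edges from $z$ into $U$; either choice works.) Thus the verification is entirely routine once the right value is selected, and the isomorphism with the countable random graph follows immediately.
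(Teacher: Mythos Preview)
Your proof is correct and follows essentially the same approach as the paper: verify the extension property characterizing the countable random graph by extending any finite $U\cup W$ with a new vertex at a suitably chosen rational value, then invoke the back-and-forth categoricity. The only cosmetic difference is that you place the new value $q$ above all values in $U\cup W$, whereas the paper merely takes $q$ distinct from the values in $U$ and then appeals explicitly to universality plus homogeneity; both choices work for the same reason.
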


\begin{proof}
It suffices to show that the graph underlying the countable random $\Q$-graded digraph $\Gamma$ exhibits the finite-pattern property characterizing the countable random graph, namely, that for any two disjoint sets $A$ and $C$, there is a node connected to every edge in $A$ and to no edge in $C$. Such a node exists in $\Gamma$ by the universality and homogeneity of $\Gamma$: simply select a rational value not used by any node in $A$ and observe that there is a larger graded graph realizing the desired pattern with respect to $A$ and $C$ and having that value; by universality there is a copy of this digraph inside $\Gamma$; by homogeneity, we may translate this copy to align with the given nodes of $A$ and $C$. So the undirected ungraded digraph underlying $\Gamma$ has the finite-pattern property, and consequently is isomorphic to the countable random graph, since the countable random graph has this property and a simple back-and-forth argument shows that any two countable graphs with this property are isomorphic.
\end{proof}

\section{A strengthening of Ressayre's theorem}

Every model of arithmetic comes along with a model of finite set theory via the Ackerman coding, obtained by defining the
following relation on the natural numbers:
$$n\mathrel{E}m\quad\Iff\quad\text{ the }n^\th\text{ binary bit of }m\text{ is }1.$$
In the standard model $\N$, this relation is well-founded and extensional, and it is an often-assigned elementary exercise to prove that
the Mostowski collapse of $E$ is precisely the hereditary finite sets.
 $$\<\N,E>\quad \iso\quad \<\HF,\in>$$
The relation $E$ can be defined inside any model of arithmetic $M$, leading to the corresponding model of finite set theory $\<\HF^M,{\in^M}>$, and conversely, the natural numbers of $\HF^M$ are isomorphic to $M$ again. In this way, the models of arithmetic are in a natural one-to-one correspondence with the models of finite set theory (properly understood), and the two theories are bi-interpretable. Specifically, \cite{KayeWong2007:OnInterpreationsOfArithmeticAndSetTheory} shows that the models of \PA\ give rise to all the models of the finite set theory $\ZFC^{\neg\infty}=\ZFC-\text{Inf}+\neg\text{Inf}+\text{TC}$, where $\text{TC}$ is the assertion that every set has a transitive closure, and conversely the natural numbers of any such model satisfies \PA, in inverse fashion, making for a bi-interpretation of these two theories. The curious anomaly here is that while the axiom $\text{TC}$ is provable and may be omitted if one replaces the usual foundation axiom in \ZFC\ with the axiom scheme of $\in$-induction, nevertheless results in \cite{EnayatSchmerlVisser2011:OmegaModelsOfFiniteSetTheory} show, quite interestingly, that $\text{TC}$ is not provable if one uses only the usual foundation axiom.

The theme of this article begins in earnest with the following remarkable theorem of Ressayre's.

\begin{theorem}[Ressayre \cite{Ressayre1983:IntroductionToRecursivelySaturatedModels}]\label{Theorem.Ressayre'sTheorem}
If $M$ is any nonstandard model of \PA, with $\<\HF^M,{\in^M}>$ the corresponding nonstandard hereditary finite sets of $M$, then for any
consistent computably axiomatized theory $T$ extending $\ZF$ in the language of set theory, there is a submodel $N\of\<\HF^M,{\in^M}>$ such that $N\satisfies T$.
\end{theorem}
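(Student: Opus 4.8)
The plan is to deduce Ressayre's theorem directly from the universality of the countable random $\Q$-graded digraph $\Gamma$ of Theorem~\ref{Theorem.TheCountableRandomQgradedDigraph}, avoiding the saturation and resplendency machinery entirely. First I would observe that it is enough to embed $\Gamma$, as an induced subgraph, into $\<\HF^M,{\in^M}>$. Indeed, since $T$ is consistent the completeness theorem gives a countable model $\mathcal N=\<N,{\in^{\mathcal N}}>$ of $T$, and because $T$ extends $\ZF$ and hence proves the foundation axiom, the digraph $\<N,{\in^{\mathcal N}}>$ has no directed cycle --- applying foundation to the set $\set{b_0,\dots,b_{n-1}}$ refutes any path $b_0\in^{\mathcal N}\cdots\in^{\mathcal N}b_n=b_0$ --- so $\mathcal N$ is a countable acyclic digraph and is therefore isomorphic to an induced subgraph of $\Gamma$ by Theorem~\ref{Theorem.TheCountableRandomQgradedDigraph}(2). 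Composing $\mathcal N\into\Gamma\into\<\HF^M,{\in^M}>$ then produces the desired submodel $N\of\<\HF^M,{\in^M}>$ with $N\iso\mathcal N\satisfies T$.

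To embed $\Gamma$ into $\<\HF^M,{\in^M}>$ I would exploit the computable presentation $\Gamma=\Union_{n\in\omega}\Gamma_n$ of Theorem~\ref{Theorem.TheCountableRandomQgradedDigraph}(3). The map $n\mapsto\Gamma_n$ is primitive recursive, and the facts ``$\Gamma_n$ is a finite $\Q$-graded digraph'' and ``$\Gamma_n$ is an induced subgraph of $\Gamma_{n+1}$'' are provable in $\PA$. So $M$ carries out an internal version of this construction, producing finite digraphs $\Gamma_n^M$ agreeing with the genuine $\Gamma_n$ at every standard $n$, and satisfying $\forall n<c\ (\Gamma_n$ is an induced subgraph of $\Gamma_c)$ for each nonstandard $c$. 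Fixing one nonstandard $c$, the real $\Gamma=\Union_{n\in\omega}\Gamma_n$ is then (isomorphic to) an induced subgraph of the single object $\Gamma_c^M$, which $M$ views as a finite $\Q$-graded --- hence acyclic --- digraph coded by an element of $\HF^M$.

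It remains to realize $\Gamma_c^M$ as an induced subgraph of $\<\HF^M,{\in^M}>$, and for this I would prove the finitary lemma that $\ZFC^{\neg\infty}$ proves ``every finite acyclic digraph is isomorphic to an induced subgraph of $\<\HF,{\in}>$''. Given finite acyclic $D$ on vertices $v_1,\dots,v_k$, list them so every predecessor of $v_j$ precedes $v_j$ (possible by the finite form of Lemma~\ref{Lemma.CycleFreeImpliesQgraded}) and recursively set $a_j=\set{a_i\st v_i\toward v_j}\union\set{t_j}$, with tags $t_j$ of strictly increasing, large enough rank that the $a_j$ are distinct and create no spurious $\in$-edge. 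Since $\HF^M$ satisfies finite set theory and internally sees $\Gamma_c^M$ as a finite acyclic digraph, $\HF^M$ furnishes such an isomorphism, and because ``isomorphic to an induced subgraph'' is absolute this yields a genuine induced embedding $\Gamma_c^M\into\<\HF^M,{\in^M}>$. Composing the three embeddings finishes the proof; note that the computability of $T$ was never used, and that the same reasoning shows $\<\HF^M,{\in^M}>$ is universal for all countable acyclic binary relations.

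The step I expect to be the main obstacle is the standard/nonstandard bookkeeping in the middle: making precise that the primitive recursive construction internalized by $M$ genuinely extends the external one at standard stages, that the increasing union of the standard-stage graphs sits inside the nonstandard-stage object $\Gamma_c^M$ as an \emph{induced} subgraph rather than merely a subgraph, and that acyclicity, the induced-subgraph relation, and the collapsing isomorphism of the last step are all absolute between $\HF^M$ and the surrounding world. The finitary embedding lemma is routine once the tag ranks are handled carefully, and the rest is an application of what is already in hand.
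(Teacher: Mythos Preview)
Your proposal is correct and follows essentially the same route as the paper. The paper does not prove Ressayre's theorem directly but instead deduces it from the stronger universality statement (Theorem~\ref{Theorem.NonstandardModelsAreUniversal}), whose proof is exactly your argument: run the computable construction of $\Gamma=\Union_n\Gamma_n$ inside $M$ to a nonstandard stage $k$, observe that the genuine $\Gamma$ sits as an induced subgraph of $\Gamma_k^M$, apply inside $M$ the lemma that every finite acyclic digraph is isomorphic to a set under $\in$ (Lemma~\ref{Lemma.FiniteDigraphsAsSets}), and then restrict externally. The only cosmetic difference is in the finitary embedding lemma: the paper makes the digraph extensional by adjoining a copy of $(n{+}1,\in)$ and then Mostowski-collapses, whereas you use rank tags in the spirit of Lemma~\ref{Lemma.RealizingWellfoundedDigraphsAsSets}; both work and are interchangeable. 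Your anticipated ``main obstacle'' --- the absoluteness of the standard stages and of the induced-subgraph relation --- is genuine bookkeeping but not a real difficulty; the paper dispatches it in a single sentence, and your concluding remark that computability of $T$ is irrelevant and that full universality follows is precisely the paper's point.
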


In particular, we may find models of \ZFC\ and even of \ZFC\ + large cardinals as submodels of $\HF^M$, a structure whose theory is all about the finite and which thinks every object is finite.
Incredible! How can this be? Ressayre's proof uses partial saturation and resplendency to prove that one can find the submodel of the desired theory $T$.

Theorem \ref{Theorem.NonstandardModelsAreUniversal} strengthens Ressayre's theorem, while simplifying the proof, by replacing the use of resplendency with a soft appeal to digraph universality. In particular, the role of the theory $T$ is omitted: we need not assume that $T$ is computable, and we don't just get one model of $T$, but rather all countable models of $T$, for the theorem shows that the nonstandard models of finite set theory are universal for all countable acyclic binary relations. In particular, every model of set theory arises as a submodel of $\<\HF^M,{\in^M}>$.

\goodbreak
\begin{theorem}[Main theorem \ref{MainTheorem.NonstandardModelsOfFiniteSetTheoryAreUniversal}]\label{Theorem.NonstandardModelsAreUniversal}
If $M$ is any nonstandard model of \PA\ or indeed merely of $IE_1$, then every countable model of set theory is isomorphic to a
submodel of the hereditarily finite sets $\<\HF^M,{\in^M}>$ of $M$. Indeed, $\<\HF^M,{\in^M}>$ is universal for all countable acyclic
binary relations.
\end{theorem}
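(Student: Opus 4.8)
The plan is to reduce the statement to the combinatorics already developed in the preceding section, so that the only real work lies in building a sufficiently rich finite-approximation structure inside $\<\HF^M,{\in^M}>$. By Theorem \ref{Theorem.TheCountableRandomQgradedDigraph}, it suffices to find an isomorphic copy of the countable random $\Q$-graded digraph $\Gamma$ as an induced subgraph of $\<\HF^M,{\in^M}>$, since $\Gamma$ is already universal for all countable acyclic digraphs, and by Lemma \ref{Lemma.CycleFreeImpliesQgraded} every countable model of set theory, viewed as an acyclic digraph via Observation \ref{Observation.ModelOfSetTheoryAsOrdGradedDigraph}, embeds into $\Gamma$. So the entire theorem comes down to: the nonstandard $\HF^M$ contains a copy of $\Gamma$.

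To produce that copy, first I would work inside $M$ to define, by an internal recursion, a sequence $\<\Gamma_k : k < c>$ of finite graded digraphs coded as elements of $\HF^M$, where $c$ is some fixed nonstandard natural number of $M$. In the standard metatheory this is exactly the concrete computable construction of $\Gamma$ given in the proof of Theorem \ref{Theorem.TheCountableRandomQgradedDigraph}: $\Gamma_0$ is empty, and $\Gamma_{k+1}$ extends $\Gamma_k$ by adjoining one new vertex for every possible isomorphism type of connectivity-and-grading pattern over $\Gamma_k$. The key point is that this recursion is $\Delta_0$-definable with the appropriate parameters and that the theory $IE_1$ (bounded induction with $E_1$ formulas, a fragment of $I\Delta_0$, hence available in any model of \PA) proves the totality of this recursion up to any $k$. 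Therefore $M$ believes there is such a sequence of length $c$ for its nonstandard $c$, and the union $\Gamma^M = \bigcup_{k<c}\Gamma_k$ is a single object of $\HF^M$ — a ``finite'' (from $M$'s viewpoint) graded digraph of nonstandard size.

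Next I would verify that the externally-viewed structure $\<\Gamma^M, {\toward}, {\leq}>$, with its $M$-membership edges and the $M$-coded grading pre-order, actually is (isomorphic to) the countable random $\Q$-graded digraph. For this I invoke the characterization in Theorem \ref{Theorem.TheCountableRandomQgradedDigraph}(1): any countable homogeneous $\Q$-graded digraph universal for finite $\Q$-graded digraphs is unique up to isomorphism, and in fact it suffices to check the finite-pattern extension property. Given any finite induced subgraph $A \subseteq \Gamma^M$, all of its vertices lie below some standard stage — no, more carefully, they lie in $\Gamma_k$ for some (possibly nonstandard) $k$, but since $A$ is genuinely finite in the metatheory its vertices sit inside $\Gamma_k$ for some $k$ that is below $c$; then at stage $k+1$ the construction inside $M$ placed a vertex realizing any prescribed finite connectivity-and-value pattern over those vertices, and that vertex is a genuine element of $\HF^M$. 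Because the grading pre-order of $\Gamma^M$ is a nonstandard dense linear order without endpoints and countably many classes from the outside, it is order-isomorphic to $\Q$, and one checks the new vertex can be chosen with any desired rational value. Thus $\Gamma^M$ has the finite-pattern property, is countable (as $\HF^M$ is countable, $M$ being a countable model), and hence $\Gamma^M \iso \Gamma$. A final back-and-forth then embeds any given countable acyclic binary relation into $\Gamma^M \of \<\HF^M,{\in^M}>$, and in particular every countable model of set theory embeds, since it is such a relation.

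The main obstacle I anticipate is the formalization at the boundary between internal and external reasoning: making sure the recursion defining $\Gamma_k$ is provably total in the weak theory $IE_1$ rather than merely in \PA, and controlling how the nonstandard stages interact with the genuinely finite subgraphs one wants to extend. One must be careful that a genuinely finite external set $A$ of vertices of $\Gamma^M$ really does appear inside some stage $\Gamma_k$ with $k$ a legitimate index below $c$ — this uses that each vertex of $\Gamma^M$ is born at some stage and that finitely many externally-chosen stages have an external supremum below any nonstandard $c$ (using that the standard cut is not definable, so $M$ does not see this bound, but externally it exists). The grading-value bookkeeping — arranging that the internally-built grading, read externally, is a dense linear order and that new vertices can be inserted at prescribed values — is routine but must be set up so that $M$'s ``rational labels'' (say, dyadic rationals coded in $\HF^M$ up to stage $k$) externally realize all of $\Q$.
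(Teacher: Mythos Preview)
Your strategy is essentially the paper's---run the computable construction of $\Gamma$ inside $M$ to a nonstandard stage---but there are two genuine gaps and one unnecessary complication.

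\textbf{The missing step.} You build $\Gamma^M$ as a \emph{coded} digraph, an element of $\HF^M$ that $M$ regards as a finite graph with its own internal edge relation $\toward$. But that edge relation is not $\in^M$, so $\Gamma^M$ is not yet an induced substructure of $\<\HF^M,{\in^M}>$; writing ``$\Gamma^M \of \<\HF^M,{\in^M}>$ with its $M$-membership edges'' conflates the two. The paper closes this gap with Lemma~\ref{Lemma.FiniteDigraphsAsSets}: since $M$ thinks $\Gamma_k$ is a finite acyclic digraph, apply that lemma \emph{inside} $M$ to obtain $A\in\HF^M$ with $M\models\<\Gamma_k,{\toward}>\iso\<A,{\in}>$, and then restrict externally. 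You need this (or an equivalent modified-Mostowski step) somewhere.

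\textbf{The $IE_1$ claim.} Asserting that the recursion $\Gamma_{k+1}$ from $\Gamma_k$ is provably total in $IE_1$ is not justified and is almost certainly false: $IE_1$ does not even prove that exponentiation is total, and your construction has at least exponential growth in $k$. The paper does not attempt this; instead it invokes Paris's theorem that every countable nonstandard model of $IE_1$ has an initial segment modeling \PA, and runs the entire argument inside that segment.

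\textbf{The complication.} You work to show that the \emph{full} nonstandard $\Gamma^M$ has the finite-pattern property, worrying about nonstandard stages and whether the grading is externally $\Q$-like. This has a boundary issue (a pattern over vertices born at stage $c-1$ needs $\Gamma_c$, which you did not build), and it is unnecessary: since the construction is computable, each standard stage $\Gamma_n^M$ agrees with the true $\Gamma_n$, so the \emph{standard part} of $\Gamma_k^M$ is literally $\Gamma$, as an induced subgraph. The paper uses this direct observation and avoids the back-and-forth entirely.
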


One may equivalently cast the theorem entirely in terms of finite set theory: every nonstandard model $\<M,{\in}^M>$ of finite
set theory $\ZFC^{\neg\infty}$ is universal for all countable acyclic binary relations. In particular, every model of set theory, of \ZFC\
or whatever theory, is isomorphic to a submodel of $\<M,{\in}^M>$, even though this latter model believes that every set is finite.

In order to prove theorem \ref{Theorem.NonstandardModelsAreUniversal}, I shall begin with a simple link between acyclic digraphs and sets with the set-membership relation.

\begin{lemma}\label{Lemma.FiniteDigraphsAsSets}
Every finite acyclic digraph is isomorphic to a hereditarily finite set with the set-membership relation $\in$.
\end{lemma}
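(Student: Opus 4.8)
The plan is to proceed by induction on the number of vertices of a finite acyclic digraph $\<G,\toward>$, building an injective assignment $a\mapsto \hat a$ of vertices to hereditarily finite sets that faithfully represents the edge relation, i.e. $a\toward b$ if and only if $\hat a\in\hat b$. First I would invoke Lemma \ref{Lemma.CycleFreeImpliesQgraded} (or just the acyclicity directly) to fix a linear order $a_0,a_1,\ldots,a_{n-1}$ of the vertices extending the reachability partial order $\trianglelt$, so that every edge $a_i\toward a_j$ has $i<j$. I then define $\hat a_j$ by recursion along this enumeration: having defined $\hat a_0,\ldots,\hat a_{j-1}$, set $\hat a_j$ to encode exactly the set of predecessors of $a_j$, namely $\hat a_j = \set{\hat a_i : a_i\toward a_j}$, but with a correction to guarantee injectivity. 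Since each $\hat a_i$ for $i<j$ is already hereditarily finite and there are only finitely many of them, and since we have infinitely many hereditarily finite sets available (for instance the sets $\set{\hat a_0,\ldots,\hat a_{j-1},s}$ for varying hereditarily finite $s$ not among those already used), we can always arrange $\hat a_j$ to differ from all previously assigned values while still having exactly the prescribed elements among $\set{\hat a_0,\ldots,\hat a_{j-1}}$ — for instance by throwing in one extra ``tag'' set of sufficiently large rank that does not equal any $\hat a_i$ and is not $\in$-below any vertex we care about.

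The verification then splits into the two directions of the biconditional. If $a_i\toward a_j$ then $i<j$ and $\hat a_i\in\hat a_j$ by construction, so that direction is immediate. Conversely, suppose $\hat a_i\in\hat a_j$; I must rule out the case that $\hat a_i$ sneaks in either as a tag set or as an element coming from a vertex $a_k$ with $k\geq j$. The tag set is chosen of rank exceeding that of all the $\hat a_i$, so it is never equal to any vertex-set; and no $\hat a_k$ with $k>j$ can be an element of $\hat a_j$ because $\hat a_j$ was constructed before $\hat a_k$ and every element of $\hat a_j$ is either a tag or some $\hat a_i$ with $i<j$. Hence $\hat a_i\in\hat a_j$ forces $i<j$ and $a_i\toward a_j$, as required. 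Finally, injectivity of $a\mapsto\hat a$ was built into the recursion step, and the range $\set{\hat a_0,\ldots,\hat a_{n-1}}$ together with the restriction of $\in$ is then a hereditarily finite set (indeed its transitive closure is finite) isomorphic to $\<G,\toward>$ via this map.

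The only delicate point — and the step I expect to require the most care — is the injectivity bookkeeping: one must be sure that adjusting $\hat a_j$ by a tag to make it distinct from earlier values does not accidentally change its $\in$-predecessors among the relevant sets, nor create a spurious membership later on. This is why I would choose the tag to be, say, a single set of rank strictly greater than $\rank(\hat a_i)$ for all $i<j$ and distinct from every $\hat a_i$ assigned so far (such a set exists since there are hereditarily finite sets of arbitrarily large finite rank); then $\hat a_j$ has rank one more than that tag, which forces $\hat a_j\neq\hat a_i$ for $i<j$, and the tag itself can never coincide with any vertex-set nor be an element of one of lower rank. With that single design choice the rest is routine.
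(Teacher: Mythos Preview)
Your argument is correct. The decisive observation is that by choosing each tag to have rank strictly exceeding every previously built $\hat a_i$, the ranks $\rank(\hat a_0)<\rank(\hat a_1)<\cdots$ become strictly increasing; this simultaneously gives injectivity and rules out both $\hat a_i=\text{tag}_j$ (for any $i$) and $\hat a_i\in\hat a_j$ for $i\geq j$, so the backward direction of the biconditional goes through cleanly.

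The paper takes a different route to this particular lemma. Rather than building the sets directly by a tagged recursion, it first enlarges $G$ to an \emph{extensional} acyclic digraph $G^+$ by adjoining a copy of $(n{+}1,{\in})$ as a spine together with edges $k\toward v_k$, so that distinct vertices of $G$ acquire distinct predecessor sets; the ordinary Mostowski collapse of $G^+$ is then an isomorphism onto a transitive hereditarily finite set, and restricting to the image of $G$ yields the desired embedding. Your approach is in effect a \emph{modified} Mostowski collapse, with a distinguishing tag inserted into each image set --- precisely the device the paper introduces later in lemma~\ref{Lemma.RealizingWellfoundedDigraphsAsSets} for arbitrary well-founded digraphs (there using the fixed tag $\{\emptyset,x\}$ rather than a set of high rank). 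The paper's spine-and-collapse proof has the conceptual virtue of reducing to the unmodified Mostowski theorem; your direct construction has the virtue of anticipating the general method and avoiding the auxiliary graph $G^+$ altogether.
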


\begin{proof}
Suppose that $G$ is a finite acyclic digraph with $n$ vertices $v_1$,\ldots$v_n$. I shall first extend $G$ to a larger finite extensional
digraph $G^+$ as follows. Let $N$ be the transitive closure of the digraph $$0\toward 1\toward \cdots\toward n,$$ which is the same as
$(n+1,\in)$, if we use the finite ordinals as natural numbers, and let $G^+$ be the graph containing the disjoint union of $G$
and $N$, together with edges $k\toward v_k$ for $k\geq 1$. Note that (i) any two nodes of $G$ have different predecessors in $N$; (ii)
different nodes of $N$ have different predecessors in $N$; and (iii) every nonzero node in $N$ has $0$ as a predecessor, but no node in $G$ has $0$
as a predecessor. Thus, no two nodes of $G^+$ have the same predecessors and so $G^+$ is extensional. Furthermore, since $G$ and $N$ are
both acyclic and I added edges pointing only from $N$ to $G$, it follows that $G^+$ is acyclic. In other words, the edge relation of $G^+$
is an extensional well-founded relation and therefore by the Mostowski collapse $\pi(v)=\set{\pi(w)\st w\toward v}$ is isomorphic to a
unique transitive set under the set membership relation $\in$.
 $$w\toward v\qquad\text{ if and only if }\qquad \pi(w)\in\pi(v).$$
This isomorphism associates each node in $G^+$ with the set of sets associated with its children. The isomorphism $\pi$ carries the
directed graph $(G,\toward)$ to the hereditarily finite set $(A,\in)$, where $A=\set{\pi(v)\st v\in G}$, as desired.
\end{proof}

Note that this lemma is provable in finite set theory $\ZFC^{\neg\infty}$. It will be generalized by lemma \ref{Lemma.RealizingWellfoundedDigraphsAsSets}, which handles the general case of well-founded digraphs.

\begin{proof}[Proof of theorem \ref{Theorem.NonstandardModelsAreUniversal}]
Suppose that $M$ is a nonstandard model of \PA, and that $\<\HF^M,{\in^M}>$ is the corresponding nonstandard model of finite set theory
$\ZFC^{\neg\infty}$ built via the Ackerman coding in $M$. Consider the computable construction of the countable random $\Q$-graded digraph $\Gamma$, as carried out inside $M$. Let $k$ be a nonstandard integer of $M$, and consider the $k^\th$ stage of this construction $\Gamma_k^M$. Since the
standard stages of construction will be the same inside $M$ as externally, it follows that the standard part of $\Gamma_k^M$ is the actual countable random $\Q$-graded digraph $\Gamma$ itself. That is, $\Gamma$ is an induced subgraph of $\Gamma_k^M$. Since $M$ thinks $\Gamma_k$
is finite, we may apply lemma \ref{Lemma.FiniteDigraphsAsSets} inside $M$ to find a set $A\in\HF^M$ such that $M$ thinks
$\<\Gamma_k,{\toward}>\iso\<A,{\in^M}>$. Since $\Gamma$ is an induced subgraph of $\Gamma_k^M$, we may restrict this isomorphism (externally)
to find $\<\Gamma,{\toward}>\iso\<B,{\in^M}>$ for some $B\of \set{a\in\HF^M\st M\satisfies a\in A}$. Thus, we have realized the countable
random $\Q$-graded digraph as isomorphic to a collection of sets in $\HF^M$. But since $\Gamma$ is universal among all countable acyclic digraphs,
it follows that every countable set with an acyclic binary relation is isomorphic to an induced subgraph of $\Gamma$, and hence to a subcollection
of $\HF^M$ under $\in^M$, as desired.

Lastly, let me discuss the issue of weakening the theory \PA\ to $I\Delta_0$, which allows induction only for $\Delta_0$ assertions, and
even down to $IE_1$, which has induction only for bounded existential assertions. The reason is that every countable nonstandard model of
$IE_1$ has an initial segment that is a model of \PA, by a result of Paris \cite{Paris1984:OnTheStructureOfBoundedE1Induction(Cech)}, whose
proof relies on an earlier corresponding result for $I\Delta_0$, proved by McAloon \cite{McAloon1982:OnTheComplexityOfModelsOfArithmetic}.
So if we begin with a model $M\satisfies IE_1$, therefore, we may simply cut down to the $\PA$ initial segment $M_0\satisfies\PA$, where we
will find the universal structure by the argument of the previous paragraph. Thus, every model of an acyclic binary relation embeds into
$\<\HF^{M_0},{\in^{M_0}}>$, which is an initial segment of $\<\HF^M,{\in^M}>$, and so every model of an acyclic binary relation embeds into $\HF^M$,
as desired.
\end{proof}

\begin{corollary}
The countable models of set theory, up to isomorphism, have a universal object under the submodel relation. Namely, if $\<M,{\in^M}>$ is any $\omega$-nonstandard
model of set theory, then every countable model of set theory is isomorphic to a submodel of $\<M,{\in^M}>$.
\end{corollary}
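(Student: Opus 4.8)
The plan is to deduce this essentially for free from Theorem~\ref{Theorem.NonstandardModelsAreUniversal}. The key observation is that when $\<M,{\in^M}>$ is an $\omega$-nonstandard model of set theory, the natural numbers $\omega^M$ of $M$ form a nonstandard model of \PA\ (indeed $M$ satisfies \KP, which suffices to verify the \PA\ axioms hold in $\<\omega^M,{+^M},{\cdot^M},{<^M}>$, and $\omega^M$ is nonstandard precisely because $M$ is $\omega$-nonstandard). Thus $\<\omega^M,\ldots>$ is a nonstandard model of \PA\ in the sense required by Theorem~\ref{Theorem.NonstandardModelsAreUniversal}.

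Next I would identify the hereditarily finite sets of $M$ with the structure obtained from $\omega^M$ by the Ackerman coding. Since $M$ satisfies at least \KP, $M$ correctly computes $\HF^M = V_\omega^M$, and $M$ verifies that the Ackerman relation $E$ on $\omega^M$ is isomorphic (via Mostowski collapse inside $M$) to $\<\HF^M,{\in^M}>$. Hence $\<\HF^M,{\in^M}>$ is exactly the model of finite set theory built from the nonstandard \PA-model $\omega^M$ to which Theorem~\ref{Theorem.NonstandardModelsAreUniversal} applies, so $\<\HF^M,{\in^M}>$ is universal for all countable acyclic binary relations. In particular, since every countable model of set theory $\<N,{\in^N}>$ is an acyclic digraph (as noted in Observation~\ref{Observation.ModelOfSetTheoryAsOrdGradedDigraph}, or simply because $\in^N$ is well-founded-or-ill-founded-but-at-least-acyclic by extensionality and foundation), it embeds into $\<\HF^M,{\in^M}>$, which is itself a submodel of $\<M,{\in^M}>$. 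Composing, $N$ is isomorphic to a submodel of $\<M,{\in^M}>$.

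The main obstacle, such as it is, is bookkeeping about which weak set theory is in play: one must check that \KP\ is enough for $M$ to define $\HF^M$ correctly, to carry out the Ackerman coding, and to verify internally the statement of Lemma~\ref{Lemma.FiniteDigraphsAsSets} needed in the proof of Theorem~\ref{Theorem.NonstandardModelsAreUniversal} (namely, that every finite acyclic digraph is isomorphic to a hereditarily finite set)---but all of this is routine, since these are $\Delta_0$ or low-level recursive constructions well within \KP. One should also remark that the hypothesis of $\omega$-nonstandardness is genuinely used: a well-founded (or merely $\omega$-standard) model of set theory cannot be universal in this way, since for instance it cannot contain a copy of itself as a proper submodel together with all taller well-founded models, and more pointedly its $\HF$ is just the genuine $\<\HF,\in>$, which is not universal for countable acyclic relations. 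Thus the corollary is sharp, and the word ``$\omega$-nonstandard'' cannot be dropped.
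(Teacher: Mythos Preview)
Your main argument is correct and is exactly the paper's approach: apply Theorem~\ref{Theorem.NonstandardModelsAreUniversal} to $\omega^M$, noting that $\<\HF^M,{\in^M}>$ sits inside $\<M,{\in^M}>$. The paper's own proof is the one-line version of what you wrote.

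However, your closing sharpness remark is wrong. You claim that a merely $\omega$-standard (but possibly ill-founded) model cannot be universal, and that therefore the hypothesis ``$\omega$-nonstandard'' cannot be dropped. In fact the paper immediately remarks after this corollary that every countable ill-founded model is universal, even when the ill-foundedness appears only above $\omega$; this is proved later as Corollary~\ref{Corollary.IllfoundedModelsAreUniversal} using the main theorem rather than the $\HF^M$ argument. Your justification is also off: the claim that a well-founded model ``cannot contain a copy of itself as a proper submodel'' contradicts the main theorem of the paper (every countable $M$ embeds into its own $L^M$), and the observation that $\HF^M$ is the genuine $\HF$ for $\omega$-standard $M$ only shows that \emph{this particular method} fails, not that universality fails. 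What is true is that a \emph{well-founded} countable model cannot be universal for all countable models of set theory, since any submodel of a well-founded model is itself well-founded and hence cannot be isomorphic to an ill-founded model.
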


\begin{proof}
Indeed, theorem \ref{Theorem.NonstandardModelsAreUniversal} shows that every countable model $\<N,E>$ of any binary relation embeds already
into the hereditary finite sets $\<\HF^M,{\in^M}>$ of $M$.
\end{proof}

I will show later that in fact {\it every} countable nonstandard model of set theory is universal in this way, even if the ill-foundedness appears only higher than $\omega$.

\section{$\ell$-graded digraphs and the finite-pattern property}

It will be convenient to consider grading notions other than $\Q$, aiming eventually at the well-founded acylic digraphs, which are precisely
the $\alpha$-graded digraphs for some ordinal $\alpha$, or in the case of proper classes, the $\Ord$-graded digraphs, as well as the $\No$-graded digraphs, using the surreal numbers $\No$. To assist with this
analysis, consider the general case of an arbitrary linear order $\ell$ and the $\ell$-graded digraphs $G$, which have a value assignment $v\mapsto
\alpha_v$ of the nodes $v\in G$ with elements $\alpha_v\in \ell$ in such a way that $v\toward w$ in $G$ implies $\alpha_v< \alpha_w$ in $\ell$.
Any such graph is acyclic, since a directed cycle in $G$ would give rise to a violation of asymmetry in $\ell$.

\begin{theorem}\label{Theorem.CountableRandomL-gradedDigraph}
For any countable linear order $\ell$, there is a countable homogeneous $\ell$-graded digraph that is universal for all countable $\ell$-graded
digraphs. This digraph is unique up to isomorphism (even when restricting universality to the finite $\ell$-graded digraphs), and has a
computable presentation if $\ell$ is computable.
\end{theorem}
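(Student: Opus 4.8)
The plan is to mimic the three constructions already given for the case $\ell=\Q$ in the proof of Theorem \ref{Theorem.TheCountableRandomQgradedDigraph}, checking that nothing used $\Q$ except its being a countable linear order. The cleanest route is the \Fraisse-limit argument. Let $\mathcal{Q}_\ell$ be the class of \emph{finite} $\ell$-graded digraphs, that is, finite digraphs whose vertices carry values in $\ell$ compatibly with the edge relation, with embeddings required to preserve edges, non-edges, and the exact $\ell$-value of each vertex. First I would verify that $\mathcal{Q}_\ell$ is a countable class (there are only countably many finite subsets of $\ell$ and finitely many digraph structures on each) with the hereditary, joint-embedding, and amalgamation properties. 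Hereditary is immediate. For joint embedding and amalgamation one simply takes the appropriate (amalgamated) union of vertex sets, keeps all existing values and edges, and adds no new edges between the newly-juxtaposed parts: any such disjoint-over-$A$ union is again $\ell$-graded, since the value assignment on the union is still order-preserving along edges — the only edges are the old ones. Thus the amalgam is a legitimate member of $\mathcal{Q}_\ell$, exactly as in the $\Q$ case. By the \Fraisse\ theorem \cite[thm\,7.1.2]{Hodges1993:ModelTheory} there is a countable homogeneous structure $\Gamma_\ell$ whose finite substructures are, up to isomorphism, precisely the members of $\mathcal{Q}_\ell$.

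Next I would read off universality. Any countable $\ell$-graded digraph $G$ is the direct limit of its finite induced sub-$\ell$-graded-digraphs, each a member of $\mathcal{Q}_\ell$; the universal (weak-homogeneity) property of the \Fraisse\ limit lets one build a chain of value-preserving, edge-preserving embeddings of these finite pieces into $\Gamma_\ell$ that cohere, producing an embedding $G\to\Gamma_\ell$. Here it matters that we set up $\mathcal{Q}_\ell$ with \emph{absolute} $\ell$-values built into the morphisms, so that the embeddings automatically respect the $\ell$-grading and not merely a relative grading; this is the same remark made after the $\Q$-case. Uniqueness up to isomorphism — even when universality is only demanded against finite $\ell$-graded digraphs — follows from the standard back-and-forth argument: any two countable $\ell$-graded digraphs that are universal for finite $\ell$-graded digraphs and have the one-point extension property (which universality for finite structures forces, since for any finite configuration the required one-point extension is itself a finite $\ell$-graded digraph) are isomorphic, and $\Gamma_\ell$ has this property by construction.

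For the computable-presentation clause I would reproduce the concrete stage-by-stage construction: if $\ell$ is computable, enumerate $\ell$ and build $\Gamma_\ell$ as $\bigcup_n (\Gamma_\ell)_n$, where $(\Gamma_\ell)_0$ is empty and $(\Gamma_\ell)_{n+1}$ adds, for each of the finitely many ways to extend $(\Gamma_\ell)_n$ by one vertex assigned one of the finitely many $\ell$-values appearing so far (or one fresh value witnessing each ``gap type'' in $\ell$ relative to the used values, which is decidable from a computable presentation of $\ell$), a vertex realizing that connectivity-and-value pattern. Each stage is a finite, effectively computable object, so the union is a computable $\ell$-graded digraph, and the ``forth'' half of the back-and-forth shows it is universal, hence isomorphic to $\Gamma_\ell$. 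Finally, conclusion (2)-style remarks carry over verbatim: since every countable acyclic digraph can, by Lemma \ref{Lemma.CycleFreeImpliesQgraded}, be graded by \emph{some} countable linear order, one does not get every countable acyclic digraph inside a single $\Gamma_\ell$, which is precisely why the later sections must pass to richer grading orders such as $\Ord$ and $\No$. The main obstacle — really the only point needing care rather than transcription — is confirming the amalgamation property with the values handled as absolute labels: one must check that amalgamating two finite $\ell$-graded digraphs over a common part, \emph{without} introducing spurious cross-edges, never forces two vertices to collide or forces an order violation, and that when a vertex of $B$ and a vertex of $C$ happen to share an $\ell$-value they may simply remain distinct non-adjacent vertices of the same value in $D$ (the gradings here being non-strict, this is allowed). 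Once that is in hand, every clause of the theorem follows by the arguments already in place for $\Q$.
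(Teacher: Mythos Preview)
Your \Fraisse-limit argument for existence, homogeneity, universality and uniqueness is correct and is exactly the first construction the paper gives. The paper then offers a second, slicker route you do not mention: since any countable linear order $\ell$ sits inside $\Q$, one may simply take the countable random $\Q$-graded digraph $\Gamma$ already built in Theorem~\ref{Theorem.TheCountableRandomQgradedDigraph} and restrict it to the vertices whose value lies in $\ell$; this $\Gamma\restrict\ell$ inherits homogeneity and universality at once. The payoff of this alternative shows up in the computability clause: if $\ell$ is computable, it embeds computably into $\Q$, and restricting the already-computable $\Gamma$ gives a computable presentation of $\Gamma\restrict\ell$ with no further work.

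Your own computability argument has a small wrinkle worth flagging. You write that at each finite stage one adds a vertex for ``each gap type in $\ell$ relative to the used values, which is decidable from a computable presentation of $\ell$.'' But for a general computable linear order the question of whether a given cut is realized is only $\Sigma_1$, not decidable. The fix is easy---enumerate $\ell$ in parallel and at stage $n$ admit only the first $n$ enumerated values---but as written the decidability claim is not justified, and the paper's restriction-from-$\Q$ trick sidesteps the issue entirely.
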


\begin{proof}
The collection $\mathcal{Q}_\ell$ of all finite $\ell$-graded directed graphs has the hereditary property, the joint embedding property and the
amalgamation property, and consequently admits a \Fraisse\ limit, which is up to isomorphism the unique countable, homogeneous, $\ell$-graded
digraph of its age. This graph is universal for all countable $\ell$-graded directed graphs by essentially the same argument as in theorem
\ref{Theorem.TheCountableRandomQgradedDigraph}.

Alternatively, it may be easier simply to view $\ell$ as a suborder of the rational order $\Q$, and then consider the restriction $\Gamma\restrict\ell$ of the countable random $\Q$-graded digraph $\Gamma$ to the vertices having value in $\ell$. The subgraph $\Gamma\restrict\ell$ is $\ell$-graded and
inherits the homogeneity property from $\Gamma$, since any finite partial automorphism of $\Gamma\restrict\ell$ is also a finite partial
automorphism of $\Gamma$, which therefore extends to an automorphism of $\Gamma$, which respects the values and thus provides an
automorphism of $\Gamma\restrict\ell$. The digraph $\Gamma\restrict\ell$ remains universal for $\ell$-graded directed graphs, because any such
graph embeds into $\Gamma$ in a way that uses only the labels in $\ell$ and hence embeds into $\Gamma\restrict\ell$.

If $\ell$ is computable, then $\ell$ has a computable copy inside $\Q$, and using the computable presentation of $\Gamma$ from theorem
\ref{Theorem.TheCountableRandomQgradedDigraph}, we obtain a computable presentation $\Gamma\restrict\ell$, as desired.
\end{proof}

Let us call this graph the {\df countable random $\ell$-graded digraph}. There is also a finite-pattern property for these graphs analogous
to the $\omega$-categorical first-order characterization of the countable random graph, mentioned in observation \ref{Observation.CountableRandomDigraphIsCountableRandomGraph}.

\begin{definition}\label{Definition.FinitePatternProperty}\rm
An $\ell$-graded digraph $\Gamma$ satisfies the {\df finite-pattern property}, if for any disjoint finite sets of
vertices $A$, $B$ and $C$ and any $\alpha\in \ell$, such that every vertex in $A$ has value less than $\alpha$ and every vertex in $B$ has value greater than
$\alpha$, then there is a vertex $v$ in $\Gamma$ with value exactly $\alpha$ such that $a\toward v$
 and $v\toward b$ for every $a\in A$ and $b\in B$, but $v$ has no edges with any vertex in $C$.
\end{definition}

\begin{figure}[ht]
\begin{tikzpicture}[-left to]
  \draw (0,0) circle (1);
  \draw (0,4) circle (1);
  \draw (3,2) ellipse (1.3);
  \node[fill=white,anchor=north east] at (0,0) {$A$};
  \node[fill=white,anchor=south east] at (0,4) {$B$};
  \node[fill=white,anchor=west] at (3,2) {$C$};
  \node at (0,2) (v) {$\circ$};
  \node[anchor=south west] at (0,2) {$v$};
  \node[fill=white] at (-2,2) (alpha) {$\alpha$};
  \draw[dotted,-] (alpha) -- (-.5,2);
  \node at (-.4,.4) (a) {$\bullet$};
  \node at (.4,.1) (a') {$\bullet$};
  \node at (-.4,3.6) (b) {$\bullet$};
  \node at (.5,4.2) (b') {$\bullet$};
  \node at (2.5,1.5) {$\bullet$};
  \node at (2.8,2.2) {$\bullet$};
  \draw (a) -> (v);
  \draw (a') to (v);
  \draw (v) to (b);
  \draw (v) to (b');
\end{tikzpicture}
\caption{The finite-pattern property}
\end{figure}
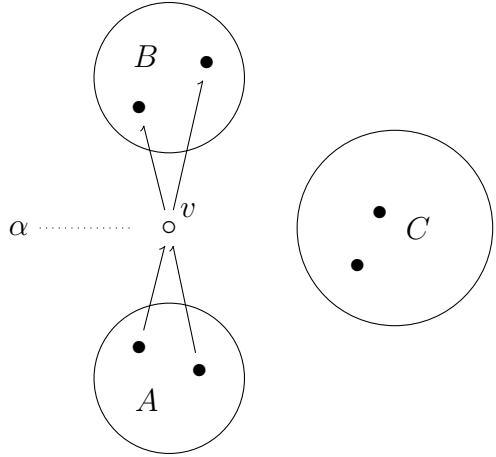

Equivalently, an $\ell$-graded digraph has the finite-pattern property just in case it is existentially closed in the class of all
$\ell$-graded digraphs.

\begin{theorem}\label{Theorem.FinitePatternPropertyIFF}
If\/ $\Gamma$ is a countable $\ell$-graded digraph for a countable linear order $\ell$, then the following are equivalent:
 \begin{enumerate}
  \item $\Gamma$ has the finite-pattern property.
  \item $\Gamma$ is isomorphic to the countable random $\ell$-graded digraph.
 \end{enumerate}
\end{theorem}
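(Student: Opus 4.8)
The plan is to prove the equivalence of (1) and (2) by a standard back-and-forth argument, using the already-established existence and uniqueness of the countable random $\ell$-graded digraph from Theorem~\ref{Theorem.CountableRandomL-gradedDigraph}. The direction (2)$\implies$(1) should be the easy one: I would verify that the countable random $\ell$-graded digraph $\Gamma_\ell$, realized for instance as $\Gamma\restrict\ell$ inside the countable random $\Q$-graded digraph, has the finite-pattern property. Given disjoint finite sets $A$, $B$, $C$ and a value $\alpha\in\ell$ with every vertex of $A$ below $\alpha$ and every vertex of $B$ above $\alpha$, one simply observes that there is a one-vertex extension of the finite induced subgraph on $A\cup B\cup C$ adding a new vertex $v$ at value $\alpha$ connected to everything in $A$ (as a successor), to everything in $B$ (as a predecessor), and to nothing in $C$; this extension is itself a finite $\ell$-graded digraph, so by the universality and homogeneity of $\Gamma_\ell$ it is realized over the given copies of $A$, $B$, $C$. (The constraint that $A$-vertices lie below $\alpha$ and $B$-vertices above $\alpha$ is exactly what makes this one-vertex extension a legitimate $\ell$-graded digraph; otherwise the edge relation would violate the grading.)

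For the substantive direction (1)$\implies$(2), I would argue that any two countable $\ell$-graded digraphs with the finite-pattern property are isomorphic, and then since $\Gamma_\ell$ itself has the property by the first part, it follows that any $\Gamma$ with the finite-pattern property is isomorphic to $\Gamma_\ell$. So let $\Gamma$ and $\Gamma'$ both be countable $\ell$-graded digraphs with the finite-pattern property, enumerate their vertices as $\<a_n\st n\in\omega>$ and $\<b_n\st n\in\omega>$, and build an isomorphism by back-and-forth. At each stage I maintain a finite partial isomorphism $p\colon \Gamma\to\Gamma'$ that respects edges in both directions and respects the grading values (so $a$ and $p(a)$ always have the same value in $\ell$). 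To extend $p$ to include a new vertex $a$ from $\Gamma$ (the ``forth'' step; the ``back'' step is symmetric), let $\alpha$ be the value of $a$, let $A$ be the set of images $p(a')$ for $a'$ in the current domain with $a'\toward a$, let $B$ be the set of images $p(a')$ for $a'$ with $a\toward a'$, and let $C$ be the images of the remaining domain vertices. Since $p$ respected values, every vertex of $A$ has value below $\alpha$ and every vertex of $B$ has value above $\alpha$, so the finite-pattern property of $\Gamma'$ supplies a vertex $v$ at value $\alpha$ with precisely the connectivity to $A$, $B$, $C$ matching that of $a$ to the domain; set $p(a)=v$. Alternating forth and back steps through the two enumerations produces a bijection that is the desired isomorphism.

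The only genuinely delicate point—and the one I would be most careful about—is the bookkeeping in the extension step: one must check that the edge-pattern of the new vertex $a$ against the \emph{entire} current domain (not just its predecessors and successors) is reproduced, which is why $C$ must be included in the finite-pattern requirement with ``$v$ has no edges with any vertex in $C$,'' and one must check that values are preserved so that the hypotheses of the finite-pattern property are actually met at each stage. There is also a routine verification that an $\ell$-graded digraph has the finite-pattern property if and only if it is existentially closed among $\ell$-graded digraphs, which gives the slicker reformulation mentioned before the theorem, but for the proof of the equivalence itself the explicit back-and-forth is cleanest and I would present it that way. No new combinatorics beyond Theorem~\ref{Theorem.CountableRandomL-gradedDigraph} is needed; this theorem is essentially the $\ell$-graded analogue of the classical first-order characterization of the countable random graph recalled in Observation~\ref{Observation.CountableRandomDigraphIsCountableRandomGraph}.
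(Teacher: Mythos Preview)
Your proposal is correct and follows essentially the same approach as the paper: the easy direction $(2)\Rightarrow(1)$ via universality and homogeneity, and the substantive direction $(1)\Rightarrow(2)$ via a back-and-forth argument using the finite-pattern property on both sides. Your write-up is more explicit about the bookkeeping (tracking values and the role of $C$) than the paper's terse version, but the underlying argument is identical.
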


\begin{proof}
It is clear that the countable random $\ell$-graded digraph has the finite-pattern property, since each instance with particular $A$, $B$ and
$C$ is clearly realizable in a finite $\ell$-graded digraph, which must therefore embed into the countable random $\ell$-graded digraph by universality, thereby providing a realizing node $v$ by homogeneity.

Conversely, suppose that $\Gamma$ is a countable $\ell$-graded digraph with the finite-pattern property. It is clear by successive
applications of this property that every $\ell$-graded directed graph occurs as an induced subgraph of $\Gamma$. Now, one simply applies a
back-and-forth construction to realize that $\Gamma$ is isomorphic to the countable random $\ell$-graded digraph. At each stage, one applies
the finite-pattern property either in $\Gamma$ or in the countable random $\ell$-graded digraph to extend the isomorphism one more step.
\end{proof}

Although the random digraphs that we have constructed are homogenous as $\Q$-graded or as $\ell$-graded digraphs, as in observation \ref{Observation.NoHomogeneousDigraphWithoutGrading} they are
definitely not homogeneous merely as digraphs, without the extra structure provided by the value assignment. Meanwhile, if one should forget the values and the directionality of the edges, then the argument of observation \ref{Observation.CountableRandomDigraphIsCountableRandomGraph} shows that the graph underlying the countable random $\ell$-graded digraph, for any infinite linear order $\ell$, is isomorphic to the countable random graph.

\section{The surreal numbers and the hypnagogic digraph}

One may undertake an uncountable and even a class-sized analogue of the \Fraisse\ limit construction to produce a set-homogeneous graded class digraph that is universal for all graded digraphs, and assuming the global axiom of choice, it will also be universal for all proper class graded digraphs. In this section, I would like to describe how this graph also arises by a canonical construction intimately connected with the construction of the surreal numbers and what I call the hypnagogic digraph.

Let us first recall the standard construction of the class of surreal numbers, denoted $\No$. In this article, I am concerned with the surreal numbers only as a linear order; the ordered field structure and other structure on the surreals will not be relevant here.\footnote{Considerably pre-dating the surreal numbers, the study of what we now call the saturated linear orders was undertaken first by Hausdorff \cite{Hausdorff2002:GesammelteWerke}, who introduced the notation $\eta$ for the countable dense linear order and $\eta_\alpha$ for the saturated dense linear order of size $\aleph_\alpha$, when $\aleph_\alpha^{\smalllt\aleph_\alpha}=\aleph_\alpha$. In this notation, the surreal number line is simply $\eta_{\Ord}$.} To construct the surreal number line, one begins with nothing and then proceeds relentlessly, transfinitely, to fill all possible cuts in the order created so far. One defines the class $\No$ of all surreal numbers and their order in a simultaneous recursion, for which a surreal number is born at an ordinal stage if it is represented by a pair $\set{A\st B}$, where $A$ and $B$ are sets of previously-born surreal numbers, where every number in $A$ is smaller than every number in $B$. The idea is that $\set{A\st B}$ will be a surreal number filling the cut between $A$ and $B$, so that it is larger than every element of $A$ and smaller than every element of $B$. The very first surreal number to be created is $0=\set{\ \st\ }$, meaning by this notation that we have the empty set on each side, and then immediately afterwards the cut above $1=\set{0\st\ }$, meaning the singleton set on the left and the empty set on the right, and similarly the cut below $-1=\set{\ \st 0}$. Just  as with the rational numbers, however, the left-set/right-set $\set{A\st B}$ representation of the surreal numbers will not be unique, for different pairs of sets can determine the same cut, and so we will quotient by an underlying equivalence relation. Specifically, for surreal numbers  $x=\set{X_L\st X_R}$ and $y=\set{Y_L\st Y_R}$, we define $x\leq y$ if and only if no obvious obstacle prevents it, namely, there is no $x_L\in X_L$ with $y\leq x_L$ and no $y_R\in Y_R$ with $y_R\leq x$. Two numbers $x$ and $y$ are equivalent when $x\leq y\leq x$, and in this case we say that $x$ and $y$ are equal as surreal numbers. It is easy to prove that a surreal number $\set{A\st B}$ is equivalent to the first-born surreal number that is bigger than every element of $A$ and smaller than every element of $B$.

One sees immediately the similarity in the construction of the surreal numbers and the \Fraisse\ limit construction, and indeed, the surreal numbers are simply the proper class \Fraisse\ limit of the class of all finite linear orders. The surreal numbers are set-homogeneous and universal for all proper class linear orders (provided global choice holds, but one can construct them and get universality for set linear orders just in \ZFC). In this sense, the surreal number line is for proper class linear orders what the rational number line $\Q$ is for the countable linear orders. What I aim to do in this section is to continue this analogy by finding the corresponding proper class analogue of the countable random $\Q$-graded digraph. The answer is what I call the {\df hypnagogic digraph}, denoted $\Hg$. To explain the terminology, the hypnagogic state is the dream-like, sometimes hallucinatory state at the boundary between wakefulness and sleep, and its usage here is meant to evoke both the universal property of the graph as well as its homogeneity: peering into the hypnagogic digraph, one may see all the various dancing visions of any given graded digraph.

\begin{theorem}\label{Theorem.HynagogicDigraph}
There is surreal-numbers-graded class digraph $\Hg$, the hypnagogic digraph, such that:
\begin{enumerate}
 \item $\Hg$ is universal for all graded digraphs using any linear order, as a suborder of $\No$.
 \item $\Hg$ is set-homogeneous, meaning that any isomorphism of two induced graded subgraphs extends to an automorphism of $\Hg$.
 \item If global choice holds, then $\Hg$ is universal for all proper class graded digraphs, using any class linear order as a suborder of $\No$, and furthermore is uniquely determined up to isomorphism by (1) and (2).
 \item $\Hg$ exhibits the set-pattern property.
\end{enumerate}
\end{theorem}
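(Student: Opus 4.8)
The plan is to realize $\Hg$ as a transfinite, proper-class version of the constructions behind Theorems~\ref{Theorem.TheCountableRandomQgradedDigraph} and~\ref{Theorem.CountableRandomL-gradedDigraph}, built as an increasing union $\Hg=\bigcup_{\gamma\in\Ord}\Hg_\gamma$ of set-sized graded digraphs in which every successor stage amalgamates in all one-point extensions over the previous stage, with grading values drawn from the surreals exactly as the surreal line itself is being built. Concretely, let $\Hg_0=\emptyset$, take unions at limit stages, and at stage $\gamma+1$ do the following: for every pair of disjoint subsets $P,Q\of\Hg_\gamma$ and every surreal $\alpha\in\No_{\gamma+1}$ (the surreals born by stage $\gamma+1$) lying strictly above every value occurring in $P$ and strictly below every value occurring in $Q$, adjoin a fresh vertex $v$ of value $\alpha$ whose set of predecessors in $\Hg_{\gamma+1}$ is exactly $P$ and whose set of successors is exactly $Q$ (so $v$ has no edge to any other vertex, old or new). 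Since $\Hg_\gamma$ is a set this adjoins only set-many vertices, so each $\Hg_\gamma$ is a set; since every surreal is eventually available as a value and unboundedly many subsets $P,Q$ become available, the set of values occurring in $\Hg=\bigcup_\gamma\Hg_\gamma$ is exactly $\No$ with a proper class of vertices at each value; and since every adjoined edge runs from a strictly lower to a strictly higher surreal value, $\Hg$ is an $\No$-graded, hence acyclic, class digraph.

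The set-pattern property of item~(4) --- the evident analogue for set-sized $A,B,C$ of Definition~\ref{Definition.FinitePatternProperty} --- then falls out of the construction: given disjoint set-sized $A,B,C\of\Hg$ and $\alpha\in\No$ with every value occurring in $A$ below $\alpha$ and every value occurring in $B$ above $\alpha$, choose $\gamma$ large enough that $A\cup B\cup C\of\Hg_\gamma$ and $\alpha\in\No_{\gamma+1}$; then at stage $\gamma+1$, applied with $P=A$, $Q=B$ and value $\alpha$, we adjoined a vertex $v$ of value $\alpha$ with $a\toward v$ for every $a\in A$, with $v\toward b$ for every $b\in B$, and with no edge to any member of $C$, which is exactly the witness required.

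Items~(1), (2) and~(3) are obtained by the familiar back-and-forth and forth arguments, carried out over $\Ord$ rather than $\omega$ and using the set-pattern property in place of the finite-pattern property (exactly as Theorem~\ref{Theorem.FinitePatternPropertyIFF} is used in the countable case). For universality, given a graded digraph $G$, first realize its grading order as a suborder of $\No$ --- possible for any set linear order in \ZFC, and for any class linear order under global choice, since $\No=\eta_{\Ord}$ is saturated and hence class-universal for linear orders --- so that $G$ becomes $\No$-graded; well-order the vertices of $G$ in order type $\Ord$ (using global choice when $G$ is a proper class), and recursively build a value-preserving embedding $h\colon G\to\Hg$, at stage $\xi$ applying the set-pattern property with $A,B,C$ the $h$-images of the predecessors, successors and non-neighbors among $\{g_\eta:\eta<\xi\}$ of the next vertex $g_\xi$ and with $\alpha$ its surreal value; the embedding built so far has set-sized domain, so the property applies, and the union of the stages is the desired embedding. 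For set-homogeneity, an isomorphism of two set-sized induced graded subgraphs is extended to an automorphism by an $\Ord$-length back-and-forth over an enumeration of $\Hg$ (via global choice), matching each new vertex using the set-pattern property together with the saturation of $\No$, which supplies a surreal value lying in the cut matching that of the vertex being matched. The uniqueness clause of~(3) is the parallel $\Ord$-length back-and-forth between $\Hg$ and any other structure having properties~(1) and~(2): both have as age the class of all finite graded digraphs, are set-homogeneous, and are graded by all of $\No$, which is precisely what the back-and-forth needs at each of its set-sized stages.

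I expect the main obstacle to be the bookkeeping and size analysis of the transfinite construction: one must verify that handling all set-sized (not merely finite) subconfigurations at each of the $\Ord$-many stages keeps each $\Hg_\gamma$ a set, while still delivering enough amalgamation to extend partial isomorphisms of arbitrary set-sized domain --- as set-homogeneity requires --- and to exhaust all of $\No$ as grading values with a proper class of vertices at each. The only genuinely new ingredient beyond the countable case is global choice, used in~(3) both to well-order proper-class structures in order type $\Ord$ for the recursions and to obtain the class-universality of $\No$ among linear orders; without it the construction still yields $\Hg$ with properties~(1), (2) and~(4) and with universality for all set-sized graded digraphs.
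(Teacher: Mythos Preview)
Your proposal is correct and follows essentially the same transfinite amalgamation strategy as the paper: build $\Hg$ in $\Ord$-many stages so that the set-pattern property holds by construction, then derive universality, homogeneity and uniqueness by the class-length back-and-forth arguments you describe. The paper packages the construction more succinctly by observing that $\Hg$ is literally the term algebra of surreal-number representations $\{A\mid B\}$, unquotiented, with edges $a\toward\{A\mid B\}\toward b$ and value the surreal so represented; this is the same stage-by-stage idea, but tying the amalgamation directly to the birthday recursion for $\No$ itself. One small divergence worth noting: the paper remarks that set-homogeneity does \emph{not} require global choice, because an automorphism of the nodes up to a given birthday canonically induces an automorphism of the nodes at the next birthday via $\{A\mid B\}\mapsto\{\pi[A]\mid\pi[B]\}$; your back-and-forth argument for (2) invokes global choice to enumerate $\Hg$, so as stated it proves slightly less than the paper claims, though your closing sentence correctly asserts that (2) survives without global choice.
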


\begin{proof}
Just as for the countable random $\Q$-graded digraph in theorem \ref{Theorem.TheCountableRandomQgradedDigraph}, there are numerous constructions of this highly canonical object. For example, the hypnagogic digraph is the \Fraisse\ limit of the class of all finite $\No$-graded digraphs. It is simply the $\Ord$-saturated $\No$-graded class digraph. But meanwhile, there is also a highly canonical construction of the hypnagogic digraph, closely connected with the construction of the surreal numbers, which I prefer to take as the official definition of the class $\Hg$. The idea is simply to use the left-set/right-set representation $\set{A\st B}$ of the surreal numbers, {\it but do not quotient by the equivalence relation!} Rather, the hypnagogic digraph is the term algebra of the terms representing surreal numbers. That is, the nodes in $\Hg$ are all the various representations $\set{A\st B}$ of surreal numbers, counted as different nodes even if they are equal as surreal numbers. For a given node $v=\set{A\st B}$, I place edges $a\toward v$ and $v\toward b$, for every $a\in A$ and $b\in B$, and define the value of $v$ to be simply the surreal number that it represents. Thus, the meaning of $\set{A\mid B}$ as a node in $\Hg$ is that it is pointed at from every $a\in A$; it  points at every $b\in B$; and its value is $\set{A\mid B}$ as a surreal number. The hypnagogic digraph is thereby the result of systematically adding nodes with a specified edge connectivity with the already constructed nodes. Under this description, it is clear that $\Hg$ satisfies the set-pattern property, which in turn implies the universality property and thus the set-homogeneity by a class-length back and forth property. (Note: one need not use the global choice principle for the set-homogeneity of $\Hg$, since any automorphism of the nodes up to a given birthday induces a canonical automorphism of the cuts created at that birthday.) One gets universality for all class $\ell$-graded digraphs, using any linear order $\ell$, simply because under the global choice principle every linear order embeds as a suborder of $\No$, and then one can systematically build an embedding of the given $\ell$-graded digraph by mapping to a node representing the desired value and connectivity. Uniqueness follows similarly by a class-length back-and-forth construction.
\end{proof}

One way to view the construction of the hypnagogic digraph is that when a surreal number $v=\set{A\st B}$ is born, the meaning of this term is that
$v$ is definitely larger than every element of $A$ and smaller than every element of $B$. The hypnagogic digraph preserves this information by adding digraph edges exactly to record it. In this way, the hypnagogic digraph underlies the surreal numbers.

To further support the idea that the hypnagogic digraph $\Hg$ is the proper class analogue of the countable random $\Q$-graded digraph, observe that the hypnagogic digraph $\Hg^M$ as computed inside a countable model $M\satisfies\ZFC$ is externally isomorphic to the countable random $\Q$-graded digraph. This is because the surreal numbers $\No^M$ as computed in $M$ are a countable dense linear order without endpoints, and thus isomorphic to the rational line $\Q$, and $\Hg^M$ is thus essentially a countable $\Q$-graded digraph with the finite-pattern property, which is isomorphic to the countable random $\Q$-graded digraph by theorem \ref{Theorem.FinitePatternPropertyIFF}.

The surreal number line $\No$ is universal for all class linear orders. For any such suborder $\ell\of\No$, the {\df hypnagogic $\ell$-graded digraph} $\Hg\restrict\ell$ is formed simply by restricting the hypnagogic digraph to the nodes with value in $\ell$.

\begin{theorem}\label{Theorem.HypnagogicLgradedDigraph}
For any class linear order $\ell\of\No$, the hypnagogic $\ell$-graded digraph $\Hg\restrict\ell$ is set-homogeneous and universal for all $\ell$-graded digraphs (including class digraphs if global choice holds). Furthermore, it is uniquely determined by these properties.
\end{theorem}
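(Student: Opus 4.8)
The plan is to reduce this theorem to the already-established machinery, essentially mimicking the argument for Theorem~\ref{Theorem.CountableRandomL-gradedDigraph}, but now at the class level using the hypnagogic digraph $\Hg$ in place of the countable random $\Q$-graded digraph $\Gamma$. First I would observe that $\Hg\restrict\ell$, as defined by restricting $\Hg$ to the nodes whose surreal value lies in the class suborder $\ell\of\No$, is visibly an $\ell$-graded digraph: the grading is inherited from $\Hg$, and since $\ell$ carries the order induced from $\No$, every directed edge $a\toward v$ of $\Hg$ lying between two nodes of $\Hg\restrict\ell$ still satisfies that the value of $a$ is strictly below that of $v$ in $\ell$.

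Next I would establish set-homogeneity. This should follow exactly as in the proof of Theorem~\ref{Theorem.CountableRandomL-gradedDigraph}: any isomorphism $\pi$ between two induced graded subgraphs of $\Hg\restrict\ell$ is in particular a (set-sized) isomorphism of induced graded subgraphs of $\Hg$ itself, since $\ell$-graded substructures of $\Hg\restrict\ell$ are the same objects as the corresponding graded substructures of $\Hg$. By the set-homogeneity of $\Hg$ from Theorem~\ref{Theorem.HynagogicDigraph}, $\pi$ extends to an automorphism $\Pi$ of $\Hg$. Because automorphisms of $\Hg$ preserve the surreal value of each node, $\Pi$ maps nodes with value in $\ell$ to nodes with value in $\ell$, and hence restricts to an automorphism of $\Hg\restrict\ell$ extending $\pi$.

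For universality, given any $\ell$-graded digraph $G$ (a set digraph, or a class digraph under global choice), we may regard $\ell$ as a suborder of $\No$, so $G$ is itself an $\No$-graded digraph. By the universality of $\Hg$ for all graded digraphs using any suborder of $\No$ (Theorem~\ref{Theorem.HynagogicDigraph}, parts (1) and (3)), there is an embedding of $G$ into $\Hg$ that respects the grading values; since these values all lie in $\ell$, the image lies inside $\Hg\restrict\ell$, and thus $G$ embeds into $\Hg\restrict\ell$. Finally, uniqueness follows by the standard class-length back-and-forth argument: given two set-homogeneous $\ell$-graded class digraphs $\Gamma_1,\Gamma_2$ that are each universal for all $\ell$-graded digraphs, one alternately extends a partial isomorphism one vertex at a time, using the set-pattern property (which holds of each, since universality plus set-homogeneity yields it, paralleling Theorem~\ref{Theorem.FinitePatternPropertyIFF}) to find the required image vertex of the correct value and connectivity at each step.

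I do not anticipate a genuine obstacle here; the content was already done in Theorem~\ref{Theorem.HynagogicDigraph}. The one point that requires a little care is the restriction step in the back-and-forth for uniqueness under global choice: one must confirm that enumerating a proper class of vertices in order type $\Ord$ and extending the isomorphism through all of them actually exhausts both digraphs, which is where global choice is used to well-order the vertex classes; without global choice this is precisely why uniqueness is asserted only in that case, exactly as in Theorem~\ref{Theorem.HynagogicDigraph}.
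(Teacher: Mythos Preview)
Your proposal is correct and follows essentially the same approach as the paper: you inherit set-homogeneity from $\Hg$ by noting that automorphisms of $\Hg$ preserve surreal values and hence restrict to $\Hg\restrict\ell$, you obtain universality by viewing any $\ell$-graded digraph as $\No$-graded and embedding it into $\Hg$ via value-preserving maps, and you handle uniqueness by the class-length back-and-forth under global choice. The paper's own proof is precisely this argument, stated more tersely.
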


\begin{proof}
Clearly $\Hg\restrict\ell$ is an $\ell$-graded digraph, and it inherits the set-homogeneity of $\Hg$, since any isomorphism of induced subgraphs of $\Hg\restrict\ell$ is also an isomorphism of induced subgraphs of $\Hg$, which therefore extends to an automorphism of all of $\Hg$, which provides an automorphism of $\Hg\restrict\ell$ since these automorphisms respect the values of nodes. Furthermore, $\Hg\restrict\ell$ is universal for all $\ell$-graded digraphs, since $\Hg$ is, by embeddings that respect the values of nodes. Uniqueness follows (assuming the global choice principle) by the usual back-and-forth argument.
\end{proof}

In particular, since the ordinals are a suborder of the surreal numbers, $\Ord\of\No$, we have the hypnagogic $\Ord$-graded digraph $\Hg\restrict\Ord$, which is set-homogeneous and universal for all $\Ord$-graded digraphs. One can view the hypnagogic $\Ord$-graded digraph as constructed in a grand recursion: at each stage, for any ordinal $\alpha$ up to that stage and any sets $A$ and $B$ of previously constructed nodes, with those in $A$ having value less than $\alpha$ and those in $B$ above, one forms a new node $v$ with value $\alpha$ and creates edges $a\toward v$ and $v\toward b$ for every $a\in A$ and $b\in B$.

Note that if $M$ is a countable transitive model of set theory, with height $\lambda=\Ord^M$, then the hypnagogic $\Ord$-graded digraph $(\Hg\restrict\Ord)^M$ as constructed in $M$ is isomorphic by theorem \ref{Theorem.FinitePatternPropertyIFF} to the countable random $\lambda$-graded digraph.

\section{Realizing well-founded digraphs as sets}\label{Section.RealizingWfDigraphsAsSets}

The following basic lemma, a modification of the Mostowski collapse construction, shows how to realize well-founded acyclic digraphs as sets. Similar modified Mostowski collapse maps have arisen for diverse purposes, such as in the non-well-founded set theory of Aczel \cite{Aczel1988:Non-well-foundedSets} and in the finite set theory of Kirby \cite{Kirby2010:SubstandardModelsOfFiniteSetTheory} and elsewhere.

\begin{lemma}\label{Lemma.RealizingWellfoundedDigraphsAsSets}
Every well-founded digraph $(G,\toward)$ is isomorphic to a set $(A,{\in})$ under the set-membership relation $\in$.
Furthermore,
\begin{enumerate}
 \item If $G$ is finite, then there is a hereditarily finite such set $A$.
 \item If $G$ is countable and $\lambda$-graded for some ordinal $\lambda$, then $A$ can be found with $A\of V_{\omega+\lambda}$. In particular, if $\omega^2\leq\lambda$, then there is such $A\of V_\lambda$.
 \item If $G$ is $\lambda$-graded and $|G|\leq|V_\beta|$, then $A$ can be found with $A\of V_{\beta+2+\lambda}$. In particular, if
     $\beta^2\leq\lambda$, then there is such $A\of V_\lambda$.
 \item If $G$ is countable and $\lambda$-graded, with $\lambda$ infinite, then $A$ can be found with $A\of L_{\lambda+\lambda}$.
\end{enumerate}
\end{lemma}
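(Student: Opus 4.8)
The plan is to prove this by a modification of the Mostowski collapse, generalizing Lemma~\ref{Lemma.FiniteDigraphsAsSets}. The only obstruction to collapsing a well-founded digraph $(G,\toward)$ directly onto a transitive set is a possible failure of \emph{extensionality}---distinct vertices can share the same set of $\toward$-predecessors---and I would remove it by adjoining to $G$ a small well-founded extensional ``tagging apparatus.'' Concretely, form $G^+$ as the disjoint union of $G$ with an auxiliary well-founded extensional digraph $N$ having a distinguished bottom node $z$ (destined to collapse to $\emptyset$) and at least $|G|$ many \emph{tag nodes}, each of which has $z$ among its predecessors, and then attach to every $v\in G$ its own private tag node by a single edge into $v$. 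As in Lemma~\ref{Lemma.FiniteDigraphsAsSets}, one checks three things: distinct vertices of $G$ acquire distinct predecessor sets in $G^+$ (the attached tags differ); distinct nodes of $N$ already have distinct predecessors (as $N$ is extensional); and $z$ is a predecessor of every tag node but of no vertex of $G$, so that under the collapse $\pi(v)=\{\pi(w)\mid w\toward v\}$ no $\pi$-image of a $G$-vertex equals $\emptyset$ or the $\pi$-image of an $N$-node. Thus $G^+$ is extensional and well-founded, its Mostowski collapse $\pi$ is an isomorphism onto a transitive set, and---since the restriction of a digraph isomorphism to an induced subgraph is an isomorphism onto its image---the set $A=\{\pi(v)\mid v\in G\}$ satisfies $(G,\toward)\cong(A,\in)$.

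Items (1)--(3) then amount to choosing $N$ economically and bounding ranks. For (1), take $N$ finite (the transitively closed path $0\toward 1\toward\cdots\toward n$, exactly as in Lemma~\ref{Lemma.FiniteDigraphsAsSets}), so $A$ is hereditarily finite. For (2), with $G$ countable and $\lambda$-graded, take $N=(\omega,{\in})$: the tag nodes collapse to finite ordinals, the bottom node to $\emptyset$. For (3), with $|G|\le|V_\beta|$, take for $N$ a transitive set of cardinality at least $|G|$ whose designated tag nodes are sets of rank at most $\beta+1$ each containing $\emptyset$. In all cases the rank bound follows by induction along the grading: $v\toward w$ forces $\alpha_v<\alpha_w$, whence the $\toward$-rank of $v$ is at most $\alpha_v$; writing $\rho=\omega$ in case~(2) and $\rho=\beta+2$ in case~(3) for one more than the supremum of the tag ranks, induction on $\toward$-rank gives $\rank(\pi(v))\le\rho+\alpha_v$, since $\pi(v)$ is the image of the tag (rank $<\rho$) together with the $\pi(w)$ for $w\toward v$, each of which by hypothesis has $\rank(\pi(w))+1\le(\rho+\alpha_w)+1\le\rho+\alpha_v$. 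Hence $A\of V_{\rho+\lambda}$, i.e.\ $A\of V_{\omega+\lambda}$ in~(2) and $A\of V_{\beta+2+\lambda}$ in~(3); the ``in particular'' clauses are then the ordinal-arithmetic identities $\omega+\lambda=\lambda$ for $\lambda\ge\omega^2$ and $(\beta+2)+\lambda=\lambda$ for $\beta^2\le\lambda$ (when $\beta$ is finite the hypothesis $|G|\le|V_\beta|$ already puts us in case~(1)).

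Item~(4) is the delicate one, and I expect it to be the main obstacle. The naive collapse of $G$ will not land in $L$: the sets $\pi(v)$ recognizably encode the edge relation of $G$, so for a $G$ coding a non-constructible real they are themselves non-constructible, even though $G$ is countable and of bounded rank. The remedy is to route through a \emph{constructible universal} digraph: working inside $L$, first embed $G$ as an induced subgraph of a $\lambda$-graded well-founded digraph $U$ that is universal for all countable $\lambda$-graded digraphs---for instance a set-sized ``finite-pattern'' variant of the hypnagogic $\lambda$-graded digraph $\Hg\restrict\lambda$ of Theorem~\ref{Theorem.HypnagogicLgradedDigraph} (or, when $\lambda$ is countable in $L$, the countable random $\lambda$-graded digraph of Theorem~\ref{Theorem.CountableRandomL-gradedDigraph}), whose canonical construction from the ordinals below $\lambda$ runs by a recursion of length about $\lambda$ and so yields a constructible digraph all of whose nodes lie in $L_{\lambda+\lambda}$. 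Now apply the tagged collapse of the first paragraph to $U$ in place of $G$, again inside $L$; since $U$, its tagging apparatus and the collapse map are all definable in $L$ and constructed below stage $\lambda+\lambda$, the realization $\{\pi(u)\mid u\in U\}$ consists of constructible sets and lies inside $L_{\lambda+\lambda}$. Taking $A$ to be the image of (the copy of) $G$ under this composite gives $A\of L_{\lambda+\lambda}$ with $(G,\toward)\cong(A,\in)$; here the embedding $G\hookrightarrow U$ is carried out externally and need not be constructible, but the many ``extra'' vertices of $U$ through which it passes are precisely what prevents the resulting sets from decoding $G$, so that $A$, though possibly not a constructible set itself, is a subset of $L_{\lambda+\lambda}$. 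The two points I would take most care over are arranging $U$ to be a set rather than a proper class while retaining universality for all countable $\lambda$-graded digraphs (this works because, by the set-pattern property, $U$ need only realize \emph{finite} connectivity patterns to previously placed nodes), and verifying that both $U$ and its collapse are genuinely produced before stage $\lambda+\lambda$ of the constructible hierarchy.
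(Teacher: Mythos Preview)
Your approach is correct and matches the paper's in all essential respects: a modified Mostowski collapse to overcome non-extensionality, rank bounds read off from the grading for (1)--(3), and for (4) routing through a constructible universal $\lambda$-graded digraph---namely $(\Hg\restrict\lambda)^{L_\lambda}$---whose collapse is carried out definably over $L_\lambda$ and lands in $L_{\lambda+\lambda}$.

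The one presentational difference is that where you build an auxiliary extensional supergraph $G^+$ by adjoining a tagging apparatus $N$ and then apply the \emph{standard} Mostowski collapse, the paper folds the tag directly into the recursion via the single formula
\[
\pi(x)=\{\pi(y)\mid y\toward x\}\cup\{\{\emptyset,x\}\},
\]
checking directly that $\pi(y)\neq\{\emptyset,x\}$ (since $\emptyset\notin\pi(y)$) and that $\pi$ is injective (since $\{\emptyset,x\}\in\pi(x)$ recovers $x$). This buys uniformity: the same recursion handles all four cases without choosing a new $N$ each time, and in particular for (4) it sidesteps your worry about making $U$ a set. The paper leaves $U=(\Hg\restrict\lambda)^{L_\lambda}$ as a non-set-like class in $L_\lambda$ and simply observes that $\pi(x)$ is a subset of $L_{\lambda+1+\alpha}$ definable from $x$ and $\Lambda$, hence $\pi(x)\in L_{\lambda+1+\alpha+1}$ when $x$ has value $\alpha$, giving $A\subseteq L_{\lambda+\lambda}$. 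Your diagnosis of the obstacle (the collapse cannot be completed inside $L_\lambda$ because each node has $\lambda$ many predecessors) and of the resolution (carry it out level by level, definably, up to $L_{\lambda+\lambda}$) is exactly right.
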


\begin{proof}
Suppose that $(G,\toward)$ is a well-founded digraph. Define
 $$\pi(x)=\set{\pi(y)\st y\toward x}\union\set{\set{\emptyset,x}},$$
and let $A=\set{\pi(x)\st x\in G}$. This definition is well-defined by recursion on the well-founded relation $\toward$, since the value of $\pi(x)$ is determined by the values of $\pi(y)$ for earlier $y\toward x$. I claim that
it is an isomorphism of $(G,\toward)$ with $(A,{\in})$. It is essentially similar, of course, to the Mostowski collapse, but modified by
the inclusion of the extra element $\set{\emptyset,x}$, which is added in order to distinguish the nodes sufficiently. (In the case that
the original graph is extensional, then the nodes are already distinguished by their predecessors and this modification is unnecessary, for
the Mostowski collapse is already an isomorphism.) It is clear from the definition that $y\toward x$ implies $\pi(y)\in\pi(x)$. For the
converse direction, note first that no $\pi(x)$ is empty and since $\set{\emptyset,x}$ is nonempty, it follows also that
$\emptyset\notin\pi(y)$ for any $y$. In particular, $\pi(y)\neq\set{\emptyset,x}$ for any $x$ and $y$. Thus, if $\pi(y)\in\pi(x)$, it must
be that $\pi(y)=\pi(z)$ for some $z\toward x$. Since $\set{\emptyset,y}\in \pi(y)$ and $\set{\emptyset,z}\in\pi(z)$, but these sets are not
$\pi(u)$ for any $u$, it follows that $\set{\emptyset,y}=\set{\emptyset,z}$ and consequently $y=z$ and thus actually $y\toward x$. So I have
established for all $x,y\in G$ that
$$y\toward x\qquad\text{ if and only if }\qquad\pi(y)\in\pi(x),$$
and this is precisely what it means for $\pi$ to be an isomorphism of $(G,\toward)$ with $(A,{\in})$. This establishes the basic claim of the lemma.

I now consider the further claims by analyzing the nature of $A$. If $G$ is finite, then we may assume that the underlying nodes of $G$ are
natural numbers (or some other hereditary finite sets), and in particular, the sets $\set{\emptyset,x}$ arising in the definition of $\pi$
are all hereditarily finite. It follows inductively that every $\pi(x)$ is hereditarily finite, and so $A$ is a finite set of hereditarily
finite sets and thus hereditarily finite itself, establishing (1). Similarly, for (2), if $G$ is countable, then we may again assume that
the nodes of $G$ are natural numbers and consider the isomorphism $\pi$ as defined above. By induction on values in the grading, it follows
for any node $x$ with value $\alpha$ that $\pi(x)\in V_{\omega+\alpha}$, since $\set{\emptyset,x}\in V_\omega$ and $\pi(x)$ consists
otherwise of $\pi(y)$, where $y$ has some value $\beta<\alpha$ and hence by induction $\pi(y)\in V_{\omega+\beta}$. So $A\of
V_{\omega+\lambda}$. If $\omega^2\leq\lambda$, then $\omega+\lambda=\lambda$, and so in this case $A\of V_\lambda$.

A similar argument works in the case of (3). Assume that $G$ is $\lambda$-graded and the nodes of $G$ come from $V_\beta$. In this case,
$\set{\emptyset,x}\in V_{\beta+1}$ at worst, and inductively $\pi(x)\in V_{\beta+2+\alpha}$ when $x$ has value $\alpha$. And so $A\of
V_{\beta+2+\lambda}$. If $\beta^2\leq\lambda$, then $\beta+2+\lambda=\lambda$, and so we have $A\of V_\lambda$, as desired.

Assertion (4) is subtle. As in theorem \ref{Theorem.HypnagogicLgradedDigraph}, let $\Lambda=(\Hg\restrict\lambda)^{L_\lambda}$ be the hypnagogic $\lambda$-graded digraph as defined in $L_\lambda$. Since this graph is universal for all countable $\lambda$-graded digraphs, the original digraph $G$ embeds into $\Lambda$, and so it will suffice just to handle $\Lambda$. This is a definable class in $L_\lambda$. Although the edge relation $\toward$ of $\Lambda$ is definable in $L_\lambda$, we will not be able to carry out the modified Mostowski collapse of $\Lambda$ inside $L_\lambda$, because $\Lambda$ has $\lambda$ many nodes of each given value, and indeed, every node in $\Lambda$ has $\lambda$ many predecessors there. Thus, the sets arising at every step of the
modified Mostowski collapse will have size $\lambda$, and consequently will not necessarily be elements of $L_\lambda$. Another way to say
it is that the edge relation $\toward$ of $\Lambda$ is not set-like in $L_\lambda$, and this is why if we want to use this graph directly, we have to build sets on top of $L_\lambda$, stretching up to $L_{\lambda+\lambda}$. Specifically, let $\pi:\Lambda\to A$ be the modified Mostowski collapse as
defined above. I claim that $A\of L_{\lambda+\lambda}$, by arguing that any node $x\in\Lambda$ of value $\alpha$ has $\pi(x)\in
L_{\lambda+1+\alpha+1}$. This is true if $x$ has value $0$, since $x\in L_\lambda$ and $\pi(x)=\set{\set{\emptyset,x}}\in L_{\lambda+2}$ at
worst (although if $\lambda$ is a limit ordinal, one achieves $L_\lambda$ here). More generally, if the claim is true for all values below $\alpha$ and $x$ has value $\alpha$, then $\pi(x)=\set{\pi(y)\st y\toward
x}\union\set{\set{\emptyset,x}}$, where for $y\toward x$ we have $\pi(y)\in L_{\lambda+1+\beta}$ for some $\beta<\alpha$, and so $\pi(x)$
is a subset of $L_{\lambda+1+\alpha}$, which is definable from $\Lambda,x\in L_\lambda$, and so $\pi(x)\in L_{\lambda+1+\alpha+1}$. Thus,
altogether, $A\of L_{\lambda+\lambda}$. So we have found a set $A\of L_{\lambda+\lambda}$ such that $(A,{\in})$ is the countable random
$\lambda$-graded digraph. It now follows by the universality property of this digraph identified in theorem
\ref{Theorem.CountableRandomL-gradedDigraph} that any countable $\lambda$-graded digraph is isomorphic to a subset of this particular $A$,
establishing (4).
\end{proof}

It will follow as a consequence of the main theorem, theorem \ref{Theorem.MisSubmodelOfL^M}, that we may actually improve statements (2) and (4) of lemma \ref{Lemma.RealizingWellfoundedDigraphsAsSets} to the following assertion:
\begin{enumerate}
 \item[(5)] If $G$ is countable and $\lambda$-graded with $\lambda$ infinite, then $\<G,{\toward}>\iso\<A,{\in}>$ for some $A\of L_\lambda$.
\end{enumerate}
Indeed, this will be the key point of the proof of the main theorem.

The proof of lemma \ref{Lemma.RealizingWellfoundedDigraphsAsSets} also establishes the following, illustrating that there is an enormous difference between the embedding concept and that of an elementary embedding or even a $\Sigma_1$-elementary embedding in the context of models of set theory. The Kunen inconsistency \cite{Kunen1971:ElementaryEmbeddingsAndInfinitaryCombinatorics} (see also \cite{HamkinsKirmayerPerlmutter2012:GeneralizationsOfKunenInconsistency}) shows that there can be no $\Sigma_1$-elementary embedding from the universe to itself.

\begin{theorem}\label{Theorem.j:VtoV}
 There is a definable nontrivial embedding $j:V\to V$ from the universe to itself.
\end{theorem}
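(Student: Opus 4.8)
The plan is to apply the modified Mostowski collapse construction of Lemma~\ref{Lemma.RealizingWellfoundedDigraphsAsSets} not to an external digraph but to the universe $V$ itself, viewed as a well-founded acyclic class digraph under the membership relation $\in$. That is, I define $j:V\to V$ by the recursion
$$j(x)=\set{j(y)\st y\in x}\union\singleton{\set{\emptyset,x}},$$
exactly as in the proof of that lemma, where now $x$ ranges over all sets. This is a legitimate definition by $\in$-recursion, and the verification that it is an embedding---that $y\in x\iff j(y)\in j(x)$---proceeds verbatim as in the lemma: the forward direction is immediate, and for the converse one uses that $\emptyset\notin j(u)$ for any $u$, that each $\set{\emptyset,x}$ is nonempty and hence distinct from every value $j(u)$, and that $\set{\emptyset,x}=\set{\emptyset,z}$ forces $x=z$. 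Thus $j$ is an injective function on $V$ with $x\in y\iff j(x)\in j(y)$, i.e.\ a nontrivial embedding of the $\in$-structure $\<V,\in>$ into itself; it is nontrivial because, for instance, $\emptyset$ is not in the range of $j$, or more concretely because $j(\emptyset)=\singleton{\set{\emptyset,\emptyset}}=\singleton{\singleton\emptyset}\neq\emptyset$.

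The only real point to address is that $j$ is \emph{definable} (as a class) and \emph{total} on $V$: both follow because the defining recursion is set-like---the value $j(x)$ depends only on the values $j(y)$ for $y\in x$, a set---so the recursion theorem for $\in$-recursion applies and produces a definable class function with domain all of $V$. One should also remark, as the paper's surrounding discussion emphasizes, that $j$ is wildly non-elementary: it is not even $\Sigma_1$-elementary, since $\<V,\in>$ satisfies ``there is an empty set'' while its image under $j$ does not contain any set that $j$ identifies as empty---indeed the range of $j$ contains no transitive set at all, as every $j(x)$ has the non-transitive element $\set{\emptyset,x}$. This is consistent with the Kunen inconsistency precisely because that theorem rules out $\Sigma_1$-elementary self-embeddings, not arbitrary ones.

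I do not anticipate a genuine obstacle here; the theorem is essentially a corollary of the construction already carried out in Lemma~\ref{Lemma.RealizingWellfoundedDigraphsAsSets}, the one subtlety being the bookkeeping to confirm that the recursion is set-like and hence yields a total definable class function rather than a proper-class-sized partial object. If one wants the sharper statement that $j$ restricts to a nontrivial embedding $V_\alpha\to V_{\omega+\alpha}$ for each $\alpha$ (and in particular maps each rank-initial segment into a rank-initial segment), that follows from the rank computation in part~(2) of the lemma, but it is not needed for the bare assertion of the theorem.
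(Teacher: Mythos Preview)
Your proposal is correct and follows essentially the same approach as the paper: define $j$ by the $\in$-recursion $j(x)=\set{j(y)\st y\in x}\cup\singleton{\set{\emptyset,x}}$ and invoke the verification already carried out in Lemma~\ref{Lemma.RealizingWellfoundedDigraphsAsSets}. Your additional remarks on set-likeness of the recursion and the failure of $\Sigma_1$-elementarity are accurate elaborations, though the paper's own proof is terser and omits them.
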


\begin{proof}
Let $j(x)=\set{j(y)\st y\in x}\union\set{\set{\emptyset,x}}$, which is well-defined by recursion on the well-founded relation $\in$. The proof of lemma \ref{Lemma.RealizingWellfoundedDigraphsAsSets} shows that $y\in x\iff j(y)\in j(x)$, and so this is an embedding from $V$ to $V$, and it is clearly nontrivial.
\end{proof}

The same idea produces embeddings $j:L\to L$, regardless of the existence of $0^\sharp$, simply by applying theorem \ref{Theorem.j:VtoV} inside $L$.

\section{Warming up to the main theorems}

In this section I explain how lemma \ref{Lemma.RealizingWellfoundedDigraphsAsSets} can be used to prove several approximations to main theorem \ref{MainTheorem.MsubmodelL^M}, which I give here as a warm-up, since the proof of main theorem \ref{MainTheorem.MsubmodelL^M} will introduce several complications.

\begin{proposition}\label{Proposition.MisSubmodelOfL_lambda+lambda}
 If $M$ is any countable transitive model of set theory and $\lambda=\Ord^M$, then $\<M,{\in}>$ is isomorphic to a submodel of $\<L_{\lambda+\lambda},{\in}>$.
\end{proposition}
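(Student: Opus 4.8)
The plan is to view $\<M,{\in^M}>$ purely as a digraph and apply the machinery already assembled. Since $M$ is a countable model of set theory, observation \ref{Observation.ModelOfSetTheoryAsOrdGradedDigraph} tells us that $\<M,{\in^M}>$ is naturally a $\lambda$-graded digraph, where $\lambda=\Ord^M$ and the grading is by von Neumann rank. Moreover, because $M$ satisfies at least \KP, every set in $M$ is well-founded in $M$; but we need more, namely that the digraph $\<M,{\in^M}>$ is \emph{genuinely} well-founded as an external relation. When $M$ is transitive this is automatic: $M$ is a transitive set, so $\in^M$ is the true membership relation restricted to $M$, which is well-founded. Thus $\<M,{\in^M}>$ is a countable, well-founded, $\lambda$-graded digraph.

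Next I would simply feed this digraph into lemma \ref{Lemma.RealizingWellfoundedDigraphsAsSets}. Taking $G=\<M,{\in^M}>$, which is countable, well-founded, and $\lambda$-graded with $\lambda$ infinite (indeed $\lambda$ is a limit ordinal, being $\Ord^M$ for a model of \KP), clause (4) of that lemma applies and yields a set $A\of L_{\lambda+\lambda}$ together with an isomorphism $\pi:\<M,{\in^M}>\to\<A,{\in}>$. Since $A\of L_{\lambda+\lambda}\of V_{\lambda+\lambda}$ and $A$ is a subset of $L_{\lambda+\lambda}$, the structure $\<A,{\in}>$ is precisely a submodel of $\<L_{\lambda+\lambda},{\in}>$ in the sense defined in the introduction — an induced subgraph of that digraph. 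Composing, $\pi$ witnesses that $\<M,{\in}>$ is isomorphic to a submodel of $\<L_{\lambda+\lambda},{\in}>$, as desired. (One could equally well route through the universality of the countable random $\lambda$-graded digraph $\Gamma\restrict\lambda$, embedding $M$ into it first and then collapsing, but this is unnecessary here since we only need to realize the one digraph $M$ itself.)

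The only genuinely substantive point is the well-foundedness of $\in^M$ as an external relation, and this is exactly where the transitivity hypothesis on $M$ is doing its work: for a transitive model, internal well-foundedness and external well-foundedness coincide, so the ranks computed inside $M$ give an honest $\lambda$-grading and lemma \ref{Lemma.RealizingWellfoundedDigraphsAsSets}(4) is applicable verbatim. The entire content of this proposition thus reduces to the observation that a countable transitive model of set theory is, qua digraph, a countable well-founded $\Ord^M$-graded digraph, plus an invocation of lemma \ref{Lemma.RealizingWellfoundedDigraphsAsSets}(4). I expect the main obstacle in the eventual proof of the full main theorem \ref{MainTheorem.MsubmodelL^M} to be precisely the removal of this transitivity hypothesis: for ill-founded $M$, or even for well-founded but non-transitive $M$, the relation $\in^M$ need not be well-founded externally, so lemma \ref{Lemma.RealizingWellfoundedDigraphsAsSets} no longer applies directly and one must instead work with the acyclic (rather than well-founded) universal digraph machinery and arrange the target to land inside $L^M$ rather than in the true $L$. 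That complication is deferred; for the transitive case treated here, the argument is immediate.
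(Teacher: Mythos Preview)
Your proof is correct and follows essentially the same approach as the paper: view $\<M,{\in}>$ as a countable $\lambda$-graded digraph via observation \ref{Observation.ModelOfSetTheoryAsOrdGradedDigraph}, then invoke lemma \ref{Lemma.RealizingWellfoundedDigraphsAsSets}(4). Your additional remarks on why transitivity is needed are accurate; one small correction to your forward-looking commentary is that the main obstacle for theorem \ref{Theorem.MisSubmodelOfL^M} is not primarily the removal of transitivity but rather landing inside $L^M$ itself (height $\lambda$) rather than $L_{\lambda+\lambda}$, which is blocked by the non-set-like nature of the universal digraph (observation \ref{Observaton.NoSetLikeOrdGradedDigraphWithFPP}).
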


\begin{proof}
This proposition is a quick corollary to statement (4) of lemma \ref{Lemma.RealizingWellfoundedDigraphsAsSets}, since if we view $\<M,{\in^M}>$ as a $\lambda$-graded digraph as in observation \ref{Observation.ModelOfSetTheoryAsOrdGradedDigraph}, then lemma \ref{Lemma.RealizingWellfoundedDigraphsAsSets} statement (4) says that it is isomorphic to a submodel of $\<L_{\lambda+\lambda},{\in}>$, as desired.
%
%
%
\end{proof}

The need to go to $L_{\lambda+\lambda}$ in proposition \ref{Proposition.MisSubmodelOfL_lambda+lambda} and in lemma \ref{Lemma.RealizingWellfoundedDigraphsAsSets} statement (4) is directly connected with the fact that the hypnagogic $\lambda$-graded digraph $\Lambda$ as computed in $L_\lambda$ is not set-like in $L_\lambda$, since every node has $\lambda$ many predecessors of each smaller value. Perhaps one might hope to overcome this difficulty by finding a $\lambda$-graded digraph in $L_\lambda$ that was both set-like and had the finite-pattern property, for in this case, it would be universal for all $\lambda$-graded binary relations and we would be able to perform the modified Mostowski collapse of lemma \ref{Lemma.RealizingWellfoundedDigraphsAsSets} inside $L_\lambda$ itself. This would prove that $M$ is isomorphic to a submodel of $L_\lambda=L^M$, which is the main goal here. Unfortunately, however, the following result shows that there is no such digraph in any model of \ZF.

\begin{observation}\label{Observaton.NoSetLikeOrdGradedDigraphWithFPP}
No model of \ZF\ has an $\Ord$-graded digraph class $\Gamma$ with the finite-pattern property, such that $\Gamma$ is set-like.
\end{observation}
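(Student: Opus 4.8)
The plan is to show that the finite-pattern property together with set-likeness forces $\Gamma$ to be a \emph{set}, which contradicts the fact that an $\Ord$-graded digraph with the finite-pattern property necessarily has vertices of every ordinal value and is therefore a proper class. The mechanism is a \emph{separation} consequence of the finite-pattern property: any two vertices of $\Gamma$ can be told apart by their edge-relationships to suitable auxiliary vertices, and this lets us inject large pieces of $\Gamma$ into power sets of small pieces. Recall that in $\ZF$ there is no injective class function from a proper class into a set (the image is a set by Separation, and then the class is a set by Replacement), so any such injection will be the source of the contradiction. Note that set-likeness will be used only through the single fact that $\set{u\st u\toward b}$ is a set for every vertex $b$, and that the modified Mostowski collapse and the axiom of choice are not needed.

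First I would record two preliminary observations. An $\Ord$-graded digraph is automatically acyclic, and the finite-pattern property, applied with $A=B=C=\emptyset$, produces a vertex of value exactly $\alpha$ for every ordinal $\alpha$. Consequently, for any fixed ordinal $\alpha_0$, the class $\Gamma_{>\alpha_0}$ of vertices of value greater than $\alpha_0$ surjects, via the value function, onto the proper class of ordinals above $\alpha_0$, and hence is itself a proper class. Second, I would isolate the separation fact: given an ordinal $\delta$, fix (using the first observation) a vertex $b$ of value greater than $\delta$; then for any two distinct vertices $a,a'$ of value less than $\delta$, the finite-pattern property applied with value $\delta$, with $A=\set{a}$, $B=\set{b}$ and $C=\set{a'}$, yields a vertex $v$ of value exactly $\delta$ with $a\toward v$, with $v\toward b$, and with no edge between $v$ and $a'$. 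Thus the map sending a vertex $a$ of value less than $\delta$ to $\set{v\st v\toward b,\ v\text{ has value }\delta,\ a\toward v}$ is injective on $\Gamma_{<\delta}$, and by set-likeness its range lies in the power set of the set $\set{u\st u\toward b}$. Therefore $\Gamma_{<\delta}$ is a set, for every ordinal $\delta$; in particular the level $N_{\alpha_0}$ of vertices of value $\alpha_0$ is a set, since $N_{\alpha_0}\of\Gamma_{<\alpha_0+1}$.

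The proof then concludes by applying the same separation idea one level ``upward.'' For distinct vertices $b,b'$ of $\Gamma_{>\alpha_0}$, the finite-pattern property applied with value $\alpha_0$, with $A=\emptyset$, $B=\set{b}$ and $C=\set{b'}$, produces a vertex $v$ of value $\alpha_0$ with $v\toward b$ and with no edge between $v$ and $b'$, so that $v\toward b$ but $v$ does not point to $b'$. Hence the map $b\mapsto\set{v\in N_{\alpha_0}\st v\toward b}$ is an injection of $\Gamma_{>\alpha_0}$ into $\mathcal{P}(N_{\alpha_0})$, which is a set by the previous paragraph. This makes $\Gamma_{>\alpha_0}$ a set, contradicting the first observation.

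The only delicate point is the bookkeeping in the invocations of the finite-pattern property: one must check in each case that the value constraints (every vertex of $A$ below the chosen value, every vertex of $B$ above it) and the disjointness of $A$, $B$, $C$ genuinely hold. This is routine here, since throughout, the vertices playing the role of $A$ have strictly smaller value than the vertices playing the role of $B$, and the vertices involved are distinct because they sit at different values or are explicitly chosen to differ. I expect the main conceptual hurdle to be recognizing that the finite-pattern property alone — with no homogeneity and no appeal to the collapse — already suffices to separate arbitrary vertices and thereby to ``shrink'' $\Gamma$, which is precisely what is incompatible with $\Gamma$ being a proper class while remaining set-like.
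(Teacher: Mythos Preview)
Your argument is correct. Both your proof and the paper's work by showing that the finite-pattern property lets one inject a proper class of vertices into the power set of some set of vertices, yielding a contradiction; the key separation step is the same.

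The paper's proof, however, is considerably shorter because it skips your first paragraph entirely. Rather than first proving that each level $N_{\alpha_0}$ (or each $\Gamma_{<\delta}$) is a set, the paper simply fixes one node $p$ of nonzero value and uses $A_p=\set{u\st u\toward p}$, which is a set \emph{immediately} by the set-like hypothesis, as the separating set. A pigeonhole argument on $P(A_p)$ then gives two higher nodes $q,r$ with the same predecessors in $A_p$, and a single application of the finite-pattern property (asking for $v$ with $v\toward p$, $v\toward q$, $v\perp r$) produces the contradiction. Your step 1 is thus unnecessary: in your step 2 you could replace $N_{\alpha_0}$ by the predecessor set of any single vertex and be done. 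What your longer route buys is the side information that every bounded piece $\Gamma_{<\delta}$ is a set, which is true and mildly interesting but not needed for the observation at hand.
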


\begin{proof}
Suppose that $\Gamma$ is a set-like $\Ord$-graded digraph class with the finite-pattern property. Consider any fixed node $p$ with nonzero
value, and let $A_p$ be the set of predecessors of $p$ in $\Gamma$, which is a set precisely because we assumed that $\Gamma$ is set-like.
Since $A_p$ has only a set number of subsets, but there are a proper class of higher levels, it must be that there are two distinct nodes
$q,r$ with value above $p$, but with the same pattern of predecessors on $A_p$. That is, for $v\in A_p$, we have $v\toward q$ if and only if $v\toward r$. This violates the finite-pattern property, which would require the existence of a node $v$ with $v\toward p$ and $v\toward q$ but $v\perp r$. Such a vertex $v$ would reveal that $q$ and $r$ have different predecessors below $p$, contrary to assumption.
\end{proof}

The complications introduced into the proof of the main theorem, using the surrogate parent concept, are aimed specifically at overcoming this difficulty. Meanwhile, we can also overcome the difficulty by relaxing to a weaker but still commonly considered theory. Let ``$V=H_{\kappa^+}$'' be the theory asserting that $\kappa$ is the largest cardinal and that every set has hereditary size at most $\kappa$. For any cardinal $\kappa$, the collection $H_{\kappa^+}$ is a model of $\ZFC^-+V=H_{\kappa^+}$, and these structures and their elementary substructures are very commonly considered in set theory.

\begin{theorem}\label{Theorem.LambdaInModelOfV=H_kappa+}
Suppose $M$ is a countable transitive model of $\ZFC^-+V=H_{\kappa^+}$, where $\kappa^\ltkappa=\kappa$ in $M$ and $\lambda=\Ord^M$. Then there is a $\lambda$-graded digraph $\Lambda\of M$, which is set-like in $M$ and which obeys the $\ltkappa$-pattern property in $M$.
\end{theorem}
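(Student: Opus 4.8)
The plan is to build $\Lambda$ as the $\kappa$-sized analogue of the countable random $\Q$-graded digraph, constructed internally inside $M$. Here the grading order is $\lambda = \Ord^M$ itself, which inside $M$ is a class linear order but not a set; the key difference from Observation~\ref{Observaton.NoSetLikeOrdGradedDigraphWithFPP} is that we do not demand the full finite-pattern property, only the $\ltkappa$-pattern property — that is, for any node $p$ we are allowed $\leq\kappa$ many predecessors below any given value, and we ask only that every pattern of connectivity of size $\ltkappa$ (with appropriately positioned values) be realized. This weaker demand is exactly what the counting argument of Observation~\ref{Observaton.NoSetLikeOrdGradedDigraphWithFPP} fails to refute: below a fixed node $p$ there are now $\kappa^\ltkappa = \kappa$ many possible patterns of $\ltkappa$-sized pieces, and since $M \satisfies V = H_{\kappa^+}$ there are only $\kappa^+$ many levels, so nothing forces a repetition pattern that would violate the $\ltkappa$-pattern property.

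First I would do the construction. Work inside $M$ and recurse on ordinals $\alpha < \lambda$. Having built the digraph $\Lambda\restrict\alpha$ on all nodes of value $<\alpha$, form $\Lambda\restrict(\alpha+1)$ by adjoining, for every value $\beta \leq \alpha$ and every triple $(A,B,C)$ of pairwise disjoint subsets of $\Lambda\restrict\alpha$ each of size $\ltkappa$ with every node of $A$ having value $<\beta$ and every node of $B$ having value $>\beta$, a new node $v$ of value $\beta$ with $a\toward v$ for $a\in A$, $v\toward b$ for $b\in B$, and no edge between $v$ and any node of $C$. At limit stages take unions. Because $M \satisfies V = H_{\kappa^+}$, at stage $\alpha$ the digraph $\Lambda\restrict\alpha$ has size $\leq\kappa$ (an induction: the union of $\leq|\alpha|\cdot\kappa^+$-many... actually one must be slightly careful — see below), so there are only $\kappa^\ltkappa=\kappa$ many triples $(A,B,C)$ to handle at each value, each contributing one node, hence $\Lambda\restrict(\alpha+1)$ again has size $\leq\kappa$. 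Each $\Lambda\restrict\alpha$ is a set of $M$ since $M$ models enough of $\ZFC^-$ to carry out this recursion of length $<\lambda$, and the resulting class $\Lambda = \bigcup_{\alpha<\lambda}\Lambda\restrict\alpha$ is definable over $M$; crucially $\Lambda$ is set-like in $M$ because the predecessors of any node $v$ of value $\beta$ all lie in $\Lambda\restrict\beta$, which is a set. That $\Lambda$ is $\lambda$-graded is immediate from the construction, and the $\ltkappa$-pattern property holds by design: any instance of the $\ltkappa$-pattern property with parameters $(A,B,C,\beta)$ involves only $\ltkappa$-many nodes, hence all lie in some $\Lambda\restrict\alpha$ with $\alpha \geq \beta$, and the node realizing that pattern was explicitly added at the next relevant stage.

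The main obstacle is the bookkeeping that keeps $|\Lambda\restrict\alpha| \leq \kappa$ through all $\lambda$ stages, and in particular what happens at limit stages of cofinality $\kappa$ or at stage $\alpha$ when $|\alpha| = \kappa^+$: naively, $\bigcup_{\xi<\alpha}\Lambda\restrict\xi$ could have size $\kappa^+$. The fix is to be careful about which values appear: since at each value $\beta$ we add only $\kappa$-many nodes total (pooled over all the stages $\alpha \geq \beta$ that look back at value $\beta$ — one must phrase the recursion so that value $\beta$ gets its full complement of $\kappa$-many nodes "once" rather than re-adding at every later stage), the set of nodes of value $<\beta$ has size $|\beta|\cdot\kappa$. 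When $|\beta|\leq\kappa$ this is $\kappa$, as wanted; but $\lambda=\Ord^M$ may well exceed $\kappa^+$, so for nodes of value $\beta$ with $|\beta|^M = \kappa^+$ (which exists in $M$ since $M\satisfies V=H_{\kappa^+}$ and so $\kappa^+$ is the top cardinal, meaning such $\beta$ are cofinal in $\lambda$), the predecessor set has size $\kappa^+$. This is still a \emph{set} in $M$, so $\Lambda$ remains set-like — set-likeness, not size $\leq\kappa$, is what the theorem claims — but one must re-examine whether the $\ltkappa$-pattern-property counting still goes through when $A_p$ has size $\kappa^+$. It does: $A_p$ has $(\kappa^+)^\ltkappa = \kappa^+$ many $\ltkappa$-sized subsets in $M$ (using $\kappa^\ltkappa=\kappa$), which matches the number of available higher levels, and the construction explicitly realized each needed pattern rather than relying on a pigeonhole, so no contradiction arises. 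I would therefore organize the proof around two claims — (a) the recursion is legitimate in $\ZFC^-$ and produces a set-like definable $\Lambda$, and (b) $\Lambda$ has the $\ltkappa$-pattern property by construction — and flag the size estimate as the delicate point requiring the hypothesis $\kappa^\ltkappa=\kappa$ and $V=H_{\kappa^+}$.
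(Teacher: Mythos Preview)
Your proposal has real gaps, and the paper's approach is quite different.

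First, two misconceptions. You say the $\ltkappa$-pattern property is weaker than the finite-pattern property, and that this is why Observation~\ref{Observaton.NoSetLikeOrdGradedDigraphWithFPP} does not apply. In fact it is \emph{stronger}: every finite pattern has size $\ltkappa$, so any $\Lambda$ with the $\ltkappa$-pattern property also has the finite-pattern property. The actual reason the Observation fails here is that $M$ satisfies only $\ZFC^-$, so $P(A_p)$ need not be a set in $M$; the pigeonhole in that proof does not go through. Second, you repeatedly invoke ordinals $\beta$ with $|\beta|^M=\kappa^+$, but the theory $V=H_{\kappa^+}$ asserts that $\kappa$ is the \emph{largest} cardinal, so inside $M$ every ordinal has size at most $\kappa$. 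There is no $\kappa^+$ in $M$; your size worries at such stages are based on a misreading of the hypothesis.

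More seriously, your recursion is incoherent as written. You define $\Lambda\restrict\alpha$ to be ``all nodes of value $<\alpha$'', yet at stage $\alpha+1$ you add new nodes of every value $\beta\leq\alpha$, so nodes of value $0$ keep appearing at every later stage and $\Lambda\restrict 1$ is not, after all, the set of value-$0$ nodes. You acknowledge this and promise to ``phrase the recursion so that value $\beta$ gets its full complement of $\kappa$-many nodes once,'' but you never do so---and this is exactly where the difficulty lies. If each value level is populated once and for all, then when a value-$\alpha$ node $v$ is created the set $B$ of higher-value nodes does not yet exist, so $v$'s outgoing edges cannot be chosen to match an arbitrary later $B$. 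One must instead argue that the $\kappa$-saturated $\beta$-graded digraph can be end-extended to the $\kappa$-saturated $\gamma$-graded digraph without adding any new nodes of value $<\beta$; this is true, but it is the crux and you have not addressed it.

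The paper avoids all of this by working \emph{externally}. Inside $M$ one constructs, for each $\beta<\lambda$, the $\kappa$-saturated $\beta$-graded digraph $\Lambda_\beta$ of size $\kappa$, and observes that any such $\Lambda_\beta$ extends to a $\Lambda_\gamma$ with $\Lambda_\gamma\restrict\beta=\Lambda_\beta$. Then, using the countability of $M$, one picks an external cofinal $\omega$-sequence $\lambda_0<\lambda_1<\cdots$ and sets $\Lambda=\bigcup_n\Lambda_{\lambda_n}$. Set-likeness holds because each node lives in some $\Lambda_{\lambda_n}\in M$; the $\ltkappa$-pattern property holds because any $\ltkappa$-sized subset of $\Lambda$ lying in $M$ is a set in $M$, hence has bounded values, hence sits inside some $\Lambda_{\lambda_n}$ where the property is already known. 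The paper explicitly notes that this $\Lambda$ is only amenable to $M$, not a definable class---so your attempt to build $\Lambda$ as a definable class, even if repaired, would be a genuinely different and more ambitious argument.
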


\begin{proof}
I shall modify the construction of the hypnagogic $\lambda$-graded digraph so that it will become set-like, by restricting it to have $\kappa$ many nodes of any given value. Specifically, for each $\beta<\lambda$ of size $\kappa$, consider the construction in $M$ of the $\beta$-graded hypnagogic digraph as the \Fraisse\ limit of the finite $\beta$-graded digraphs. This graph is an element of $M$ and has the $\ltkappa$-pattern property in $M$. Furthermore, for any $\beta<\gamma<\lambda$ and any copy of the $\beta$-graded hypnagogic digraph $\Lambda_\beta$, it may be extended to the hypnagogic $\gamma$-graded digraph $\Lambda_\gamma$ as constructed in $M$, and furthermore extended in such a way that $\Lambda_\beta$ is precisely $\Lambda_\gamma\restrict\beta$. This is simply because $\Lambda_\gamma\restrict\beta$ is $\kappa$-homogeneous and has the $\ltkappa$-pattern property, and hence is isomorphic to $\Lambda_\beta$. Continuing the main argument, now, I select an increasing sequence $\<\lambda_n\st n<\omega>$ cofinal in $\lambda$ and form the limit graph $\Lambda=\Union_n \Lambda_{\lambda_n}$, where the $\Lambda_n$ form a coherent sequence of induced subgraphs, with $\Lambda_{\lambda_n}=\Lambda\restrict\lambda_n$. The graph $\Lambda$ is set-like in $M$, because every node lives in some $\Lambda_n$, which is an element of $M$, and all its children are also in $\Lambda_n$. Furthermore, $\Lambda$ is $\lambda$-graded and has the finite-pattern property. Moreover, each $\Lambda_n$ and consequently also $\Lambda$ has the $\ltkappa$-pattern property with respect to $M$, since every subset of $\Lambda$ that is in $M$ is contained in some $\Lambda_n$.
\end{proof}

Note that the final construction of $\Lambda$ given in theorem \ref{Theorem.LambdaInModelOfV=H_kappa+} was external to $M$, using the sequence $\<\lambda_n\st n<\omega>$, although the graph was built as the union of initial segments that were elements of $M$ and consequently $\Lambda$ is at least amenable to $M$. But there seems little reason to expect for this construction that $\<M,\in,\Lambda>$ satisfies $\ZFC^-(\Lambda)$. Notice also that since $\lambda$ is countable, the digraph $\Lambda$ of theorem \ref{Theorem.LambdaInModelOfV=H_kappa+} is exactly the countable random $\lambda$-graded digraph. The important part of the theorem is that this digraph is set-like in $M$, which allows us to perform the modified Mostowski collapse of lemma \ref{Lemma.RealizingWellfoundedDigraphsAsSets}.

\begin{corollary}
 If $M$ is a countable transitive model of the theory $\ZFC^-+V=H_{\kappa^+}$ and $\kappa^\ltkappa=\kappa$ in $M$ and $\lambda=\Ord^M$, then there is a submodel $A\of M$ such that $\<A,{\in}>$
 is the countable random $\lambda$-graded digraph. Consequently, every countable $\lambda$-graded digraph is isomorphic to a submodel of $\<A,{\in}>$. In particular, every countable transitive model of set theory of height at most $\lambda$ is isomorphic to a submodel of $\<M,{\in}^M>$.
\end{corollary}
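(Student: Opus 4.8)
The plan is to read this off from theorem~\ref{Theorem.LambdaInModelOfV=H_kappa+} together with the modified Mostowski collapse of lemma~\ref{Lemma.RealizingWellfoundedDigraphsAsSets}. First I would apply theorem~\ref{Theorem.LambdaInModelOfV=H_kappa+} to the model $M\satisfies\ZFCm+V=H_{\kappa^+}$ (with $\kappa^\ltkappa=\kappa$ in $M$) to obtain the $\lambda$-graded digraph $\Lambda\of M$, which is set-like in $M$ and has the finite-pattern property, where $\lambda=\Ord^M$. Since $M$ is countable, $\lambda$ is a countable linear order and $\Lambda$ is a countable digraph; being a countable $\lambda$-graded digraph with the finite-pattern property, $\Lambda$ is isomorphic to the countable random $\lambda$-graded digraph by theorem~\ref{Theorem.FinitePatternPropertyIFF}. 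So everything reduces to showing that $\Lambda$ is realized, up to isomorphism, as a submodel $A\of M$: since $M$ is transitive we have $\in^M={\in}\restriction M$, so once $A\of M$ realizes $\Lambda$ the structure $\<A,{\in^M}>=\<A,{\in}>$ will be the countable random $\lambda$-graded digraph, as desired.

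To realize $\Lambda$ inside $M$, I would use the concrete description of $\Lambda$ from the proof of theorem~\ref{Theorem.LambdaInModelOfV=H_kappa+}, namely $\Lambda=\Union_n \Lambda_{\lambda_n}$, where $\<\lambda_n\st n<\omega>$ is increasing and cofinal in $\lambda$, each $\Lambda_{\lambda_n}$ is an element of $M$, and $\Lambda_{\lambda_n}=\Lambda\restrict\lambda_n$. Because the grading strictly increases along edges, each $\Lambda_{\lambda_n}$ is closed downward under the edge relation, i.e.\ $y\toward x\in\Lambda_{\lambda_n}$ implies $y\in\Lambda_{\lambda_n}$, and each $\Lambda_{\lambda_n}$ is a well-founded digraph that is genuinely an element of $M$. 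Hence $M$ may carry out internally the modified Mostowski collapse $\pi_n(x)=\set{\pi_n(y)\st y\toward x}\union\set{\set{\emptyset,x}}$ of lemma~\ref{Lemma.RealizingWellfoundedDigraphsAsSets}, producing a set $A_n=\set{\pi_n(x)\st x\in\Lambda_{\lambda_n}}\in M$; by absoluteness of recursion on well-founded set relations this $\pi_n$ is also the externally computed collapse. Since $\Lambda_{\lambda_n}$ is an induced subgraph of $\Lambda_{\lambda_{n+1}}$ closed under $\toward$-predecessors, the recursion clauses agree and $\pi_{n+1}\restrict\Lambda_{\lambda_n}=\pi_n$, so $\pi=\Union_n \pi_n$ is well defined and $A=\Union_n A_n\of M$. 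By the basic claim of lemma~\ref{Lemma.RealizingWellfoundedDigraphsAsSets}, $\pi$ is an isomorphism of $\<\Lambda,{\toward}>$ with $\<A,{\in}>=\<A,{\in^M}>$.

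I expect the one delicate point to be exactly this last maneuver. The digraph $\Lambda$ need not be a set, nor even a definable class, in $M$---theorem~\ref{Theorem.LambdaInModelOfV=H_kappa+} only makes it amenable to $M$---so one cannot simply ``run the modified Mostowski collapse inside $M$'' on $\Lambda$ as a whole; the external union of the internally computed collapses of the genuine elements $\Lambda_{\lambda_n}\in M$ is what sidesteps this, and it is precisely the set-likeness of $\Lambda$ in $M$ that keeps every stage $A_n$ inside $M$. Once $A$ is in hand the remaining claims are immediate: by theorem~\ref{Theorem.CountableRandomL-gradedDigraph} the countable random $\lambda$-graded digraph is universal for all countable $\lambda$-graded digraphs, so every such digraph is isomorphic to an induced subgraph of $\<A,{\in^M}>$, hence to a submodel of $\<M,{\in^M}>$. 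In particular, if $\<N,{\in^N}>$ is any countable transitive model of set theory of height $\Ord^N\leq\lambda$, then by observation~\ref{Observation.ModelOfSetTheoryAsOrdGradedDigraph} it is an $\Ord^N$-graded, hence $\lambda$-graded, digraph, and so it too is isomorphic to a submodel of $\<M,{\in^M}>$.
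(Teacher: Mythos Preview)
Your proposal is correct and follows essentially the same route as the paper: obtain $\Lambda$ from theorem~\ref{Theorem.LambdaInModelOfV=H_kappa+}, use the modified Mostowski collapse of lemma~\ref{Lemma.RealizingWellfoundedDigraphsAsSets} (exploiting set-likeness to keep the collapse inside $M$), and then invoke the finite-pattern characterization and universality of the countable random $\lambda$-graded digraph. Your treatment of the ``delicate point''---collapsing each $\Lambda_{\lambda_n}$ internally and taking the external union, with coherence following from downward closure---is exactly the content the paper compresses into the phrase ``whose initial segments are sets in $M$,'' so you have simply made explicit what the paper leaves implicit.
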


\begin{proof}
By theorem \ref{Theorem.LambdaInModelOfV=H_kappa+}, there is a set-like $\lambda$-graded digraph $\Lambda$ with the finite-pattern property, whose initial segments are sets in $M$. From this it follows that the modified Mostowski collapse of $\Lambda$ as defined in lemma \ref{Lemma.RealizingWellfoundedDigraphsAsSets} is an isomorphism of $\<\Lambda,{\toward}>\iso\<A,{\in}>$ for some $A\of M$. Since $\Lambda$ has the finite-pattern property, it and hence also $\<A,\in>$ are the countable random $\lambda$-graded digraph, which is universal for all countable $\lambda$-graded digraphs. Thus, every countable $\lambda$-graded digraph is isomorphic to a submodel of $\<A,\in>$ and hence of $\<M,\in>$. By observation \ref{Observation.ModelOfSetTheoryAsOrdGradedDigraph}, this includes all countable models of set theory of well-founded height at most $\lambda$.
\end{proof}

The key obstacle identified in the proof of observation \ref{Observaton.NoSetLikeOrdGradedDigraphWithFPP} is the existence of more ordinals
than the size of any given power set. The following theorem shows that this obstacle is the only obstacle, for if we are willing to give up
the power set by adding many Cohen reals, then we can find the desired universal digraph.

\begin{theorem}\label{Theorem.Add(omega,Ord)CreatesSetLikeDigraph}
Assume $V\satisfies\ZFC$. If $G\of\Add(\omega,\Ord)$ is $V$-generic for the forcing to add $\Ord$ many Cohen reals, then $V[G]\satisfies\ZFC^-$ has a submodel $A\of V[G]$ such that $\<A,\in>$ is an $\Ord$-graded digraph with the finite-pattern property.
\end{theorem}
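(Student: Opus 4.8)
The plan is to build the desired submodel $A$ of $V[G]$ directly, by imitating the construction of the hypnagogic $\Ord$-graded digraph inside $V$ but using the Cohen reals in $G$ to encode the edge relation, thereby making the resulting graph set-like in $V[G]$. First I would recall that $V[G]\satisfies\ZFC^-$: the forcing $\Add(\omega,\Ord)$ is a proper class Easton-style product of Cohen forcing, and while it destroys the power set axiom (there is no power set of $\omega$ in $V[G]$, since cofinally many of the generic reals are added later), it is tame enough that all the other $\ZFC$ axioms, in particular replacement and collection, survive; this is a standard computation about class forcing with the appropriate definability and ``cofinality-preserving'' properties, and I would cite it rather than reprove it.

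Next I would fix a definable bijection in $V$ between $\Ord$ and $\Ord\times\Ord\times\omega$, so that the $\beta$-th block of Cohen reals, for $\beta$ an ordinal, can be thought of as indexed by pairs $\langle\gamma,n\rangle$ with $\gamma<\beta$ — or more flexibly, I would index the generic by triples and use $G$ to decide, for each candidate node, its pattern of edges to all previously-constructed nodes. Concretely, the nodes of $A$ will be certain ordinals (or hereditarily-ordinal-indexed objects of $V$), laid out in $\Ord$-many levels with $\omega$-many, or set-many, nodes per level; for a node $v$ at level $\alpha$, its set of predecessors among the nodes at levels $<\alpha$ is read off from a single Cohen real $c_v$ attached to $v$. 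The crucial point is that each node has only set-many predecessors — namely those coded by the single real $c_v$, restricted to earlier nodes — so the edge relation is set-like, and hence the modified Mostowski collapse of Lemma \ref{Lemma.RealizingWellfoundedDigraphsAsSets} can be carried out inside $V[G]$ to realize this digraph as an actual subset $A\subseteq V[G]$ with the membership relation. Then I would verify the finite-pattern property by a density argument: given finite disjoint $P,Q,R$ of already-existing nodes and a target value $\alpha$ above all values in $P$ and below all values in $Q$, the set of conditions forcing that some node $v$ of value $\alpha$ has $c_v$ coding exactly ``edges to $P$, edges to $Q$, no edges to $R$'' is dense, so genericity of $G$ supplies the witness.

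The main obstacle I expect is the bookkeeping that makes the edge relation genuinely set-like while still allowing the finite-pattern property with witnesses at \emph{every} value $\alpha\in\Ord$. One has to arrange that for each ordinal $\alpha$ there are enough ``slots'' — nodes of value $\alpha$ each carrying their own independent Cohen real — so that no matter how late $\alpha$ sits relative to a given finite configuration, a fresh witness is available; this is exactly the tension that Observation \ref{Observaton.NoSetLikeOrdGradedDigraphWithFPP} shows is impossible without sacrificing the power set, and the role of $\Add(\omega,\Ord)$ is precisely to provide a proper class of mutually generic reals so that the witnesses never run out even though each node individually has only set-many predecessors. Checking that the density sets used are genuinely dense (and, for the verification that $A$ is a set, that the relevant recursion is well-founded and set-like in $V[G]$) is routine once the indexing is set up correctly, so I would present the indexing scheme carefully and then treat the density arguments and the Mostowski collapse as essentially the same computations already carried out for $\Gamma$ in Theorem \ref{Theorem.TheCountableRandomQgradedDigraph} and Lemma \ref{Lemma.RealizingWellfoundedDigraphsAsSets}.
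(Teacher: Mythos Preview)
Your overall plan is exactly the paper's: use the Cohen generic to determine the edge relation on a fixed set-like vertex set (the paper takes the vertices to be $\Ord\times\omega$ with value the first coordinate), observe that the resulting $\Ord$-graded digraph is set-like since the predecessors of any node at level $\alpha$ lie in $\alpha\times\omega$, obtain the finite-pattern property by a straightforward density argument, and then apply the modified Mostowski collapse of Lemma~\ref{Lemma.RealizingWellfoundedDigraphsAsSets} to realize the graph as $\<A,{\in}>$ inside $V[G]$.

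Two points in your write-up need tightening. First, the phrase ``a single Cohen real $c_v$'' for each node is misleading: a node at level $\alpha$ has $|\alpha\times\omega|$ many potential predecessors, and an element of $2^\omega$ cannot code an arbitrary subset of that. What you really want---and what your bijection $\Ord\cong\Ord\times\Ord\times\omega$ already sets up---is to regard the whole generic as a function deciding, for each ordered pair of nodes with increasing values, whether there is an edge; the paper simply identifies $\Add(\omega,\Ord)$ with this ``finite partial edge-assignment'' forcing. Second, in your density argument the edges $v\toward b$ for $b\in Q$ are not coded by $c_v$ but by the portion of the generic attached to the pairs $(v,b)$; the correct move is to pick a node $v=(\alpha,n)$ not mentioned by the given condition and then extend the condition to specify all the finitely many relevant edges involving $v$ in both directions. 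Once you phrase it this way the argument goes through cleanly and coincides with the paper's.
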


\begin{proof}
The idea is to build a set-like $\Ord$-graded digraph $\Lambda$ inside $V[G]$. The nodes of $\Lambda$ will be precisely the elements of $\Ord\times\omega$, and $(\alpha,n)$ will have value $\alpha$. What remains is to specify the edges, which will be done generically. Let $\P$ be the class partial order consisting of a finite digraph $G$, whose vertices are contained in $\Ord\times\omega$, ordered by the (reverse) induced subgraph relation $G\leq H\iff H\of G$, so that as usual, stronger conditions are lower in the partial order.

I claim that this partial order is isomorphic to the forcing $\Add(\omega,\Ord)$ to add $\Ord$ many Cohen reals, which is the same as the
forcing to add a generic class function $g:\Ord\to 2$ by finite conditions. One can see this simply by observing that the forcing $\P$ is
deciding generically and independently for each pair of nodes whether to place an edge $(\alpha,n)\toward (\beta,k)$, where $\alpha<\beta$,
or not. Thus, $\P$ is adding, by finite support, a generic function that decides of each possible edge, whether it is there or not. Thus,
this forcing resonates with the random characterization of the countable random digraphs we saw earlier, but using genericity in place of
randomness.

Suppose that $\Lambda$ is the resulting $V$-generic $\Ord$-grade digraph. It is clear that we will achieve the finite-pattern property,
since if $A$, $B$ and $C$ are finite subsets of the nodes and $\alpha$ is an ordinal such that every node in $A$ has value less than
$\alpha$ and every node in $B$ has value larger than $\alpha$, then it is a dense requirement that there is a node with value $\alpha$ that
is above all nodes in $A$, below all nodes in $B$ and with no edges in $C$. The reason is that any condition mentions only finitely many
additional nodes, and there will be an unmentioned node on level $\alpha$ that we may simply add in so as to satisfy the desired pattern.

Finally, the graph $\Lambda$ is set-like in $V[G]$, because the predecessors of $(\alpha,n)$ in $\Lambda$ are amongst $\alpha\times\omega$,
which is a set. It follows that we may perform the modified Mostowski collapse of $\Lambda$ as in lemma \ref{Lemma.RealizingWellfoundedDigraphsAsSets}, and find $A\of L[G]$ such that $\<\Lambda,{\toward}>\iso\<A,\in>$.
\end{proof}

\begin{corollary}
 If $M$ is a countable transitive model of set theory of height $\lambda$,  then $M$ is isomorphic to a submodel of $\<L_\lambda[G],\in>$, a forcing extension of $L_\lambda$ obtained by adding $\lambda$ many Cohen reals.
\end{corollary}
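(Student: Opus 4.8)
The plan is to apply Theorem \ref{Theorem.Add(omega,Ord)CreatesSetLikeDigraph} not in the background universe but \emph{inside} the countable transitive model $L^M$. Since $M$ is a countable transitive model of set theory, its constructible universe is $L^M = L_\lambda$ where $\lambda = \Ord^M$, and---assuming, as I shall, that $M$ satisfies $\ZFC$, so that $L^M$ does too---$L_\lambda$ is itself a countable transitive model of $\ZFC$ of height $\lambda$. Thus Theorem \ref{Theorem.Add(omega,Ord)CreatesSetLikeDigraph} is available with $L_\lambda$ playing the role of $V$.

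First I would fix a filter $G$ on the class forcing $\Add(\omega,\Ord)^{L_\lambda}$ generic over $L_\lambda$ in the relevant sense. Although $\Add(\omega,\Ord)$ is a proper class from the standpoint of $L_\lambda$, the proof of Theorem \ref{Theorem.Add(omega,Ord)CreatesSetLikeDigraph} invokes only class-many dense requirements---one for each finite configuration $(X,Y,Z)$ of nodes in $\lambda\times\omega$ together with a target value $\alpha<\lambda$, along with those forcing $\ZFC^-$ in the extension---and since $\lambda$ is countable there are only countably many of these; an $\omega$-length construction over the countable model $L_\lambda$ therefore produces a $G$ meeting them all. Relativizing the conclusion of the theorem into $L_\lambda$ yields $L_\lambda[G]\satisfies\ZFC^-$ together with a submodel $A\of L_\lambda[G]$ such that $\<A,{\in}>$ is an $\Ord^{L_\lambda[G]}$-graded digraph with the finite-pattern property; and since this forcing adds no ordinals, $\Ord^{L_\lambda[G]} = \lambda$, so $\<A,{\in}>$ is $\lambda$-graded. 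Because $L_\lambda[G]$ is a forcing extension of a countable model it is countable, so by Theorem \ref{Theorem.FinitePatternPropertyIFF} the digraph $\<A,{\in}>$ is isomorphic to the countable random $\lambda$-graded digraph and hence is universal for all countable $\lambda$-graded digraphs. By Observation \ref{Observation.ModelOfSetTheoryAsOrdGradedDigraph}, the digraph $\<M,{\in^M}>$ is itself $\Ord^M$-graded, that is, $\lambda$-graded, via von Neumann rank, and it is countable; hence it embeds into $\<A,{\in}>$, and composing with the inclusion $A\of L_\lambda[G]$ gives an embedding $\<M,{\in^M}>\into\<L_\lambda[G],{\in}>$. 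Finally, $\Ord^{L_\lambda} = \lambda$, so $\Add(\omega,\Ord)^{L_\lambda}$ is exactly the forcing to add $\lambda$ many Cohen reals over $L_\lambda = L^M$, which is the stated conclusion.

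The only point requiring real care---and the only nonroutine step---is the legitimacy of the class forcing $\Add(\omega,\Ord)$ over the countable model $L_\lambda$: one must pin down the correct notion of genericity (meeting precisely the dense classes used in the proof of Theorem \ref{Theorem.Add(omega,Ord)CreatesSetLikeDigraph}), verify that such a $G$ exists by the countability of $L_\lambda$, and confirm that $L_\lambda[G]$ satisfies enough set theory for the modified Mostowski collapse of Lemma \ref{Lemma.RealizingWellfoundedDigraphsAsSets} to be carried out internally. All three are already handled within the proof of that theorem, so the work here reduces to relativizing that argument into $L_\lambda$ and observing that the countability of $\lambda$ turns the class-many density requirements into a mere $\omega$-sequence that a single construction can meet.
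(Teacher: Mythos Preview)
Your proof is correct and follows essentially the same approach as the paper: apply Theorem~\ref{Theorem.Add(omega,Ord)CreatesSetLikeDigraph} with $L_\lambda$ in the role of $V$, obtain $A\subseteq L_\lambda[G]$ that is $\lambda$-graded with the finite-pattern property, invoke Theorem~\ref{Theorem.FinitePatternPropertyIFF} to conclude that $\langle A,\in\rangle$ is the countable random $\lambda$-graded digraph, and then embed $M$ via Observation~\ref{Observation.ModelOfSetTheoryAsOrdGradedDigraph}. The paper's proof is terser and does not spell out the existence of the generic $G$ or the class-forcing subtleties you discuss, but the argument is the same.
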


\begin{proof}
By theorem \ref{Theorem.Add(omega,Ord)CreatesSetLikeDigraph}, there is $A\of L[G]$ such that $\<A,\in>$ is $\lambda$-graded and has the finite-pattern property. By theorem \ref{Theorem.FinitePatternPropertyIFF}, it follows that $\<A,\in>$ is the countable random $\lambda$-graded digraph, which is universal for all countable $\lambda$-graded digraphs. Since $\<M,\in>$ is a $\lambda$-graded digraph by observation \ref{Observation.ModelOfSetTheoryAsOrdGradedDigraph}, it follows that $\<M,\in>$ is isomorphic to a submodel of $\<A,\in>$ and hence to a submodel of $\<L[G],\in>$.
\end{proof}

This corollary will be improved by the main theorem, but it is already surprising, because the model $M$ may have many large cardinals or other complex objects such as $0^\sharp$ that are fundamentally incompatible with $V=L$ or $V=L[G]$. Nevertheless, the main theorem will omit the need for $G$ and obtain the even more surprising result that $M$ actually embeds as a submodel of $L_\lambda$ itself.

\section{Proving the main theorems}

Finally, I am ready to prove the main theorems \ref{MainTheorem.MsubmodelL^M} and \ref{MainTheorem.CountableModelsLinearlyOrdered}.

\begin{theorem}[Main Theorem 1]\label{Theorem.MisSubmodelOfL^M}
Every countable model of set theory $\<M,{\in^M}>$ is isomorphic to a submodel of its own constructible universe $\<L^M,{\in^M}>$. In other words, there is an embedding $$j:\<M,{\in^M}>\to\<L^M,{\in^M}>.$$
\end{theorem}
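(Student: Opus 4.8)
The plan is to reduce the theorem to a statement about realizing the $\Ord^M$-graded digraph $\<M,{\in^M}>$ as a collection of sets inside $L^M$, and the heart of the matter is to carry out the modified Mostowski collapse of Lemma~\ref{Lemma.RealizingWellfoundedDigraphsAsSets} \emph{internally} to $L^M$, despite the obstruction of Observation~\ref{Observaton.NoSetLikeOrdGradedDigraphWithFPP}. First I would split into the two cases already foreshadowed in the warm-up section. If $M$ is $\omega$-nonstandard, then in particular $L^M$ is $\omega$-nonstandard, and Theorem~\ref{Theorem.NonstandardModelsAreUniversal} (applied internally: $L^M$ thinks it contains a nonstandard model of $\PA$, namely the nonstandard $\omega$ of $M$) shows $\<\HF^{L^M},{\in^{L^M}}>$ is already universal for all countable acyclic binary relations, so $M$ embeds into $\HF^{L^M}\subseteq L^M$ and we are done. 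The remaining, genuinely hard case is when $M$ is $\omega$-standard; here the ordinals of $M$ form a (possibly ill-founded) linear order $\lambda=\Ord^M$ which is nonetheless $\omega$-standard at its bottom, and the goal is to find inside $L^M$ a set-like $\lambda$-graded digraph with the finite-pattern property, against which $\<M,{\in^M}>$ embeds by Theorem~\ref{Theorem.FinitePatternPropertyIFF} and Observation~\ref{Observation.ModelOfSetTheoryAsOrdGradedDigraph}.

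The key device for defeating Observation~\ref{Observaton.NoSetLikeOrdGradedDigraphWithFPP} is the ``surrogate parent'' idea advertised in the text. The point of that observation is that a set-like class digraph with the finite-pattern property cannot exist in a model of $\ZF$, because a single node $p$ has only a set of predecessors, yet a proper class of higher nodes must eventually repeat their predecessor-pattern on that set, violating the pattern property relative to $p$. The remedy is to \emph{not} demand the finite-pattern property of the class digraph $\Lambda\subseteq L^M$ itself, but only a weaker, localized version adequate to embed \emph{countable} structures: when building an embedding of a countable $\lambda$-graded digraph $G$ one node at a time, at each stage one has placed only finitely (or countably) many nodes, and one needs a fresh node of the prescribed value realizing the prescribed connectivity with \emph{those} already-placed images. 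I would arrange $\Lambda$ so that each node $v$ of value $\alpha$, instead of literally having all the lower nodes as possible predecessors, has a private ``surrogate parent'' structure coding enough information, so that the collapse stays set-like in $L^M$; concretely, one builds $\Lambda$ as an increasing union $\Lambda=\bigcup_{\alpha<\lambda}\Lambda_\alpha$ of pieces $\Lambda_\alpha\in L^M$ where $\Lambda_\alpha$ is (internally) a copy of the hypnagogic $\alpha$-graded digraph restricted to $L^M_\alpha$-sized approximations, coherently so that $\Lambda_\alpha=\Lambda\restrict\alpha$, exactly as in Theorem~\ref{Theorem.LambdaInModelOfV=H_kappa+} but now without the crutch ``$V=H_{\kappa^+}$''---the surrogate-parent coding is what replaces that hypothesis. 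Since every node of $\Lambda$ lives in some $\Lambda_\alpha\in L^M$ together with all its predecessors, $\Lambda$ is set-like in $L^M$, so the modified Mostowski collapse of Lemma~\ref{Lemma.RealizingWellfoundedDigraphsAsSets} runs inside $L^M$ and produces $A\subseteq L^M$ with $\<\Lambda,{\toward}>\cong\<A,{\in^M}>$.

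It then remains to check that this $\Lambda$, though not satisfying the full finite-pattern property in $L^M$, is still universal for all \emph{countable} $\lambda$-graded digraphs as seen externally---which is all we need, since $M$ is countable. This is where the $\omega$-standardness of $M$ (hence of $\lambda$ at its base) and the countability of $M$ are used: externally, $\lambda$ is a countable linear order, and a back-and-forth/forth-only construction, enumerating the nodes of $\<M,{\in^M}>$ and at each finite stage invoking the localized pattern property of some $\Lambda_\alpha$, produces the embedding $\<M,{\in^M}>\hookrightarrow\<\Lambda,{\toward}>\cong\<A,{\in^M}>\hookrightarrow\<L^M,{\in^M}>$. I expect the main obstacle to be the precise design of the surrogate-parent mechanism so that simultaneously (i) $\Lambda$ is genuinely set-like and definable (or at least amenable) inside $L^M$, (ii) the coherence $\Lambda_\alpha=\Lambda\restrict\alpha$ holds through limit stages including the ``top'' limit behavior when $\lambda$ is ill-founded, and (iii) enough of the pattern property survives to push through the external forth-construction; reconciling these three with the Observation~\ref{Observaton.NoSetLikeOrdGradedDigraphWithFPP} obstruction is the whole game, and the rest (the collapse, the universality bookkeeping, the $\omega$-nonstandard case) is comparatively routine given the lemmas already established.
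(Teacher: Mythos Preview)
Your outline is on target but, as you yourself concede in the final paragraph, the actual surrogate mechanism---which you call ``the whole game''---is absent. What you have written is a description of the constraints the construction must satisfy, not the construction. Two specific remarks:

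\medskip

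First, your proposed shape $\Lambda=\bigcup_{\alpha<\lambda}\Lambda_\alpha$ with $\Lambda_\alpha\in L^M$ and $\Lambda_\alpha=\Lambda\restrict\alpha$, indexed over all ordinals $\alpha<\lambda$, is the wrong template. If each $\Lambda_\alpha$ has the finite-pattern property for values below $\alpha$ and the coherence $\Lambda_\alpha=\Lambda\restrict\alpha$ holds, you have rebuilt the hypnagogic $\Ord^M$-graded digraph, which is not set-like. The paper instead uses an \emph{external} cofinal $\omega$-sequence $\langle\lambda_n\mid n<\omega\rangle$ in $\Ord^M$, and the nodes of the surrogate digraph $\Theta$ are finite sequences $\langle v_0,\ldots,v_n\rangle$ (of external, standard length) where $v_k$ lives on the top layer $\lambda_k$ of the $(\lambda_k{+}1)$-graded hypnagogic digraph $\Gamma_k$ built in $L^M_{\lambda_{k+1}}$ for $k<n$, and $v_n\in\Gamma_n$ determines the value. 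The edge relation $\langle w_0,\ldots,w_m\rangle\ttoward\langle v_0,\ldots,v_n\rangle$ holds iff $m\leq n$ and $w_m\toward v_m$ in $\Gamma_m$: each earlier coordinate $v_k$ acts as a surrogate parent for children born in the $k$th layer. This is set-like because $\Theta_n=\Theta\restrict(\lambda_n{+}1)$ is a set in $L^M$, but $\Theta$ itself is only amenable to $L^M$, not a definable class (indeed one can recover the external sequence $\langle\lambda_n\rangle$ from it).

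Second, $\Theta$ does \emph{not} satisfy the finite-pattern property, even externally. It satisfies only the \emph{surrogate} finite-pattern property, where the sets $A,B,C$ are required to be ``completely disjoint'' (no individual node recurs across any two of the surrogate sequences involved). This weakening is exactly enough to run the forth construction embedding $\<M,{\in^M}>$ into $\<\Theta,{\ttoward}>$, provided one inductively maintains that the images chosen so far are completely disjoint. Your proposal gestures at a ``localized'' pattern property but does not identify this disjointness condition, and without it the inductive step cannot be verified.

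\medskip

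Finally, the case split on $\omega$-standardness is unnecessary: the paper's argument is uniform, since the external cofinal $\omega$-sequence exists for any countable $M$. (Your treatment of the $\omega$-nonstandard case is essentially right, though the phrase ``applied internally'' is misleading---$L^M$ does not think its own $\omega$ is nonstandard; rather, Theorem~\ref{Theorem.NonstandardModelsAreUniversal} applies externally to the genuinely nonstandard structure $\omega^M$.)
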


\begin{proof}
Suppose that $M$ is a countable model of \ZFC, not necessarily transitive or well-founded. Let $\lambda_n\in\Ord^M$ be an increasing cofinal sequence of limit ordinals from $M$. By the reflection theorem, I may assume without loss of generality that $V_{\lambda_n}^M\prec_{\Sigma_n} M$, although the argument will not really use much of this. Let $\Gamma_n$ be the hypnagogic $(\lambda_n+1)$-graded digraph as constructed in $L_{\lambda_{n+1}}^M$, running the construction for $\lambda_{n+1}$ many birthdays. Thus, each value level of $\Gamma_n$ has $\lambda_{n+1}$ many elements, and $\Gamma_n$ has a top layer consisting of the nodes with value $\lambda_n$. Furthermore, each $\Gamma_n$ has the finite-pattern property, each $\Gamma_n$ is an induced subgraph of $\Gamma_{n+1}$ and the union $\Gamma=\Union_n\Gamma_n$ of the chain is exactly the hypnagogic $\Ord$-graded digraph of $L^M$. Although each $\Gamma_n$ is a set in $L^M$, the union digraph $\Gamma$ is not set-like in $L^M$, since every node in $\Gamma_n$ gains $\lambda_{n+2}$ many new children in $\Gamma_{n+1}$.

I shall presently use the graphs $\Gamma_n$ in order to construct a somewhat more elaborate graph, which I call the surrogate digraph, which will be set-like and which will exhibit a kind of finite-pattern property that will be sufficient for universality. Define that $\<v_0,\ldots,v_n>$ is a {\df surrogate sequence} of its final node $v_n$, if $v_n\in\Gamma_n$, the value of $v_n$ is at least $\sup_{k<n}\lambda_k$ and each $v_k$ for $k<n$ is a node  in $\Gamma_k$ of value $\lambda_k$.
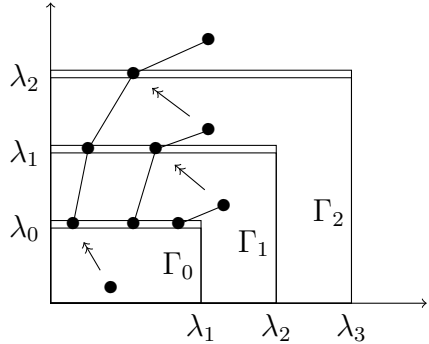
\begin{figure}[hc]
\begin{tikzpicture}
\draw[->] (0,0) to (0,4);
\draw[->] (0,0) to (5,0);
\draw (0,0) rectangle (2,1);
\draw (0,0) rectangle (2,1.1);
\draw (0,0) rectangle (3,2);
\draw (0,0) rectangle (3,2.1);
\draw (0,0) rectangle (4,3);
\draw (0,0) rectangle (4,3.1);
\node at (1.7,.5) {$\Gamma_0$};
\node at (2.7,.8) {$\Gamma_1$};
\node at (3.7,1.2) {$\Gamma_2$};
\node[anchor=east] at (0,1) {$\lambda_0$};
\node[anchor=east] at (0,2) {$\lambda_1$};
\node[anchor=east] at (0,3) {$\lambda_2$};
\node[anchor=north] at (2,0) {$\lambda_1$};
\node[anchor=north] at (3,0) {$\lambda_2$};
\node[anchor=north] at (4,0) {$\lambda_3$};
\draw[bend right] (.3,1.05) --(.5,2.05)--(1.1,3.05)--(2.1,3.5);
\node at (.3,1.05) (f) {$\bullet$};
\node at (.5,2.05) (h) {$\bullet$};
\node at (1.1,3.05) (b) {$\bullet$};
\node at (2.1,3.5) {$\bullet$};
\draw (1.1,1.05) -- (1.4,2.05) -- (2.1,2.3);
\node at (1.1,1.05) {$\bullet$};
\node at (1.4,2.05) (d) {$\bullet$};
\node at (2.1,2.3) (a) {$\bullet$};
\draw (1.7,1.05) -- (2.3,1.3);
\node at (1.7,1.05) {$\bullet$};
\node at (2.3,1.3) (c) {$\bullet$};
\draw[->>] (a) to (b);
\draw[->>] (c) to (d);
\node at (.8,.2) (e) {$\bullet$};
\draw[->>] (e) to (f);
\end{tikzpicture}
\caption{Surrogate sequences in the surrogate digraph}
\end{figure}
For surrogate sequences $v=\<v_0,\ldots,v_n>$ and $w=\<w_0,\ldots,w_m>$, define the surrogate edge relation $w\ttoward v$ to hold if and only if $m\leq n$ and $w_m\toward v_m$. The idea here is to split the parent and child roles in $\Gamma$, so that only the final node $w_m$ of a surrogate sequence $\<w_0,\ldots,w_m>$ acts as a child, while the prior nodes $w_k$ act as surrogate parents, as far as gaining new children below value $\lambda_k$ is concerned. It may be helpful to think of the relation that a set $x$ has with its projections $x\intersect V_\alpha$ for $\alpha$ of much smaller von Neumann rank than $x$, for we can tell if $y\in x$ or not, when $y$ has rank less than $\alpha$, merely by knowing whether or not $y\in x\intersect V_\alpha$. In our final analysis, the surrogates $v_k$ will be something like (but not exactly like) these projections, to capture the elements of rank less than $\lambda_k$, and to do so without leaving $\Gamma_k$. This surrogate maneuver will allow us to surmount the obstacle of observation \ref{Observaton.NoSetLikeOrdGradedDigraphWithFPP}.

Let $\Theta_n$ be the $(\lambda_n+1)$-graded digraph consisting of all surrogate sequences of length at most $n$, using $\ttoward$ as the edge relation, and giving the surrogate sequence the value of its final node in $\Gamma$. I shall refer to the union digraph $\Theta=\Union_n\Theta_n$ as the $\Ord^M$-graded {\df surrogate} digraph arising from $\<\lambda_n\st n<\omega>$. Because we can define the sequence $\<\lambda_n\st n<\omega>$ from $\Theta$, we cannot expect that it is a class in $M$. Nevertheless, each $\Theta_n$ is a set in $M$ and an induced subgraph of $\Theta$, the restriction of $\Theta$ to surrogate sequences having value at most $\lambda_n$. It follows that the full surrogate  digraph $\Theta$ is an amenable class over $M$ and indeed over $L^M$. In particular, $\Theta$ is set-like in $L^M$, since any particular surrogate sequence $\vec w$ lies in some $\Theta_n$, which is a set in $L^M$ and is the $(\lambda_n+1)$-initial segment of $\Theta$.

Let's define that two subsets of $\Theta$ are {\df completely disjoint} if they are disjoint and furthermore any two surrogate sequences from either of them have no nodes in common. That is, not only are they disjoint as sets of sequences, but the individual nodes on those sequences do not recur.

\begin{sublemma}
The surrogate digraph $\Theta$ enjoys the {\df surrogate finite-pattern property}: if $A$, $B$ and $C$ are completely disjoint finite subsets of $\Theta$ and $\alpha$ is an ordinal of $M$, such that every element of $A$ has value less than $\alpha$ and every element of $B$ has value greater than $\alpha$, then there is $v\in\Theta$ with value $\alpha$ such that $a\ttoward v$ and $v\ttoward b$ for all $a\in A$ and $b\in B$, and there are no $\ttoward$ relations between $v$ and elements of $C$, and furthermore, such that no nodes of $v$ arise on any sequence from $A$, $B$ or $C$.
\end{sublemma}

\begin{proof}
Let $n$ be least such that $\alpha\leq\lambda_n$. I shall first choose the terminal node $v_n$ of the desired surrogate sequence $v=\<v_0,\ldots,v_n>$. Let $B_n$ consist of the nodes arising in the sequences of $B$ with value in the interval $(\alpha,\lambda_n]$. These nodes arise either as the terminal nodes of sequences in $B$ that terminate with a node of value at most $\lambda_n$, or else they arise as the surrogate nodes at level $\lambda_n$ of a longer sequence. But in any case, $B_n\of\Gamma_n$, and furthermore, for $v$ to exhibit the correct $\ttoward$ relation to the elements of $B$, it will suffice that $v_n\toward b$ for each $b\in B_n$; and the main point of surrogates is that we shall be able to find such a $v_n$ inside $\Gamma_n$, without care for the much larger value nodes that may appear later on in the sequences of $B$. But we must also ensure the correct $\ttoward$ relation to $A$ and $C$, so let $A_n$ be the terminal nodes of any sequence in $A$ having value at least $\sup_{k<n}\lambda_k$, and let $C_n$ be the nodes occuring on a sequence in $C$ that are in $\Gamma_n$. Since $A$, $B$ and $C$ are completely disjoint, it follows that $A_n$, $B_n$ and $C_n$ are disjoint subsets of $\Gamma_n$, and every node in $A_n$ has value below $\alpha$ and every node in $B_n$ has value above $\alpha$. Thus, by the finite-pattern property of $\Gamma_n$, there is a new node $v_n\in\Gamma_n$ such that $a\toward v_n$ and $v_n\toward b$ for every $a\in A_n$ and $b\in B_n$ and such that $v_n$ has no $\toward$ relation with any node of $C_n$. Next, to define $v_k$ for $k<n$, let $A_k$ be the nodes arising from the sequences in $A$ in the $k^\th$ layer, that is, with value in the interval $[\sup_{i<k}\lambda_i,\lambda_k)$, and similarly let $C_k$ be any node on any sequence in $C$ in $\Gamma_k$. (We need not consider $B$ for defining the surrogates $v_k$ for $k<n$, since these surrogates act only as parents and not as children with respect to $\ttoward$.) By the finite-pattern property of $\Gamma_k$, there is a new node $v_k$ such that $a\toward v_k$ for each $a\in A_k$ and $v_k$ has no $\toward$ relation with any node in $C_k$. This defines $v=\<v_0,\ldots,v_n>$, which has value $\alpha$ and which consists of entirely new nodes not arising in $A$, $B$ and $C$. Furthermore, by design, $a\ttoward v$ for each $a\in A$ and $v\ttoward b$ for each $b\in B$, whilst $v$ has no $\ttoward$ relation with any sequence in $C$, and so we have fulfilled this instance of the surrogate finite-pattern property.
\end{proof}

So what we have is a set-like $\Ord^M$-graded digraph $\Theta\of L^M$, which does not have the finite-pattern property (and cannot by observation \ref{Observation.NoHomogeneousDigraphWithoutGrading}), but which has the surrogate finite-pattern property, and this is sufficient to carry out the universality construction. Specifically, we assign to each $x\in M$ a surrogate sequence $v_x$, with the same value as $\rank(x)$, ensuring that $x\in y\Iff v_x\ttoward v_y$. This can be achieved by enumerating $M=\set{x_n\st n<\omega}$, and choosing each $v_{x_n}$ so as to relate via $\ttoward$ to the previous $v_{x_k}$ for $k<n$ in exactly the same way that $x_n$ relates to $x_k$ via $\in$, while also having the right value and inductively maintaining that the sequences $v_{x_n}$ have no individual nodes in common. The surrogate finite-pattern property of the lemma exactly ensures that this recursive construction may proceed, and thus we build an embedding of $\<M,{\in^M}>$ into an induced subgraph of $\<\Theta,{\ttoward}>$.

Finally, I argue that $\<\Theta,{\ttoward}>\iso\<A,{\in^M}>$ for some $A\of L^M$, using the fact that $\ttoward$ is set-like in $L^M$. Specifically, since each $\Theta_n$ is a set in $L^M$, where $\ttoward$ is well-founded, we may inside $L^M$ carry out the modified Mostowski collapse of lemma \ref{Lemma.RealizingWellfoundedDigraphsAsSets}, defining $\pi(v)=\set{\pi(w)\st w\ttoward v}\union\set{\set{\emptyset,v}}$ to find $\<\Theta_n,{\ttoward}>\iso\<A_n,{\in^M}>\in L^M$. Furthermore, since each $\Theta_n$ is a $\ttoward$ initial segment of $\Theta$, it follows that these various isomorphisms cohere, and so $\<\Theta,{\ttoward}>\iso\<A,{\in^M}>$, where $A=\Union_n A_n\of L^M$.

Combining the two previous paragraphs, observe that $M$ is isomorphic to an induced subgraph of $\Theta$, which is isomorphic to a submodel of $L^M$, and so $M$ is isomorphic to a submodel of $L^M$, as desired.
\end{proof}

\begin{corollary}
 Every countable model $\<M,{\in^M}>$ of set theory is universal for all countable $\Ord^M$-graded binary relations.
\end{corollary}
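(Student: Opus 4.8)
The plan is to read off this corollary directly from the proof of Theorem~\ref{Theorem.MisSubmodelOfL^M}, extracting the universality claim that is already implicit there. Recall that the proof constructed, inside $L^M$, a set-like $\Ord^M$-graded surrogate digraph $\<\Theta,{\ttoward}>$ satisfying the surrogate finite-pattern property, then showed $\<\Theta,{\ttoward}>\iso\<A,{\in^M}>$ for some $A\of L^M$, and finally embedded $\<M,{\in^M}>$ into $\<\Theta,{\ttoward}>$ by a recursive back-and-forth-style construction using only the surrogate finite-pattern property. The key observation is that this last embedding step never used any feature of $M$ beyond the fact that $\<M,{\in^M}>$, viewed via von Neumann rank, is a countable $\Ord^M$-graded binary relation; so exactly the same argument applies verbatim to any countable $\Ord^M$-graded digraph in place of $M$.

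So, first I would fix an arbitrary countable $\Ord^M$-graded binary relation $\<R,E>$, with grading values $\alpha_r\in\Ord^M$ assigned to each $r\in R$ so that $rEs$ implies $\alpha_r<\alpha_s$. Next, I would invoke the surrogate digraph $\Theta\of L^M$ built in the proof of Theorem~\ref{Theorem.MisSubmodelOfL^M} from the cofinal sequence $\<\lambda_n\st n<\omega>$, together with its surrogate finite-pattern property. Then I would run the same recursion: enumerate $R=\set{r_n\st n<\omega}$ and choose surrogate sequences $v_{r_n}\in\Theta$ of value $\alpha_{r_n}$, relating to the previously chosen $v_{r_k}$ via $\ttoward$ exactly as $r_n$ relates to $r_k$ via $E$, while inductively keeping all the $v_{r_n}$ node-disjoint; the surrogate finite-pattern property guarantees each step can be carried out. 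This yields an embedding of $\<R,E>$ into an induced subgraph of $\<\Theta,{\ttoward}>$, which is isomorphic to a submodel of $\<A,{\in^M}>\of\<L^M,{\in^M}>\of\<M,{\in^M}>$. Hence $\<R,E>$ is isomorphic to a submodel of $\<M,{\in^M}>$, which is what universality for countable $\Ord^M$-graded binary relations means.

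There is essentially no obstacle here beyond making sure the grading values genuinely lie in $\Ord^M$ and that the value of the surrogate sequence chosen for $r_n$ can be made exactly $\alpha_{r_n}$ — but this is precisely the clause ``there is $v\in\Theta$ with value $\alpha$'' in the surrogate finite-pattern lemma, so it is covered. One minor point worth noting in passing is that when $M$ is ill-founded, its ordinal height $\Ord^M$ is itself an ill-founded linear order, so the class of ``$\Ord^M$-graded binary relations'' is correspondingly large; but nothing in the argument cared whether the grading order was well-founded, since the surrogate digraph $\Theta$ was built as an $\Ord^M$-graded digraph in the first place and its finite-pattern property was stated for arbitrary ordinals $\alpha$ of $M$. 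Thus the corollary holds uniformly for well-founded and ill-founded $M$ alike, and in the ill-founded case it subsumes the statement that $M$ is universal for all countable acyclic binary relations, since every such relation can be strictly $\Q$-graded by Lemma~\ref{Lemma.CycleFreeImpliesQgraded} and $\Q$ embeds into $\Ord^M$ whenever $M$ is ill-founded.
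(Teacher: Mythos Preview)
Your proposal is correct and takes essentially the same approach as the paper: the paper's proof is a two-sentence remark that ``this is what the proof of theorem~\ref{Theorem.MisSubmodelOfL^M} establishes,'' pointing to the surrogate finite-pattern property of $\Theta$ as the source of universality, and you have simply unpacked that remark explicitly. Your additional observations about the ill-founded case are accurate and in fact anticipate the paper's next corollary.
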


\begin{proof}
This is what the proof of theorem \ref{Theorem.MisSubmodelOfL^M} establishes. The surrogate finite-pattern property of $\Theta$ ensures that it is universal for all countable $\Ord^M$-graded binary relations.
\end{proof}

\begin{corollary}\label{Corollary.MintoNiffOrd^MintoOrd^N}
 A countable model $\<M,{\in^M}>$ of set theory embeds into another model $\<N,{\in^N}>$ of set theory if and only if the ordinals of $M$ map order-preservingly into the ordinals of $N$.
\end{corollary}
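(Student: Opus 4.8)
The plan is to extract both directions from the grading of a model of set theory by von Neumann rank, as in Observation \ref{Observation.ModelOfSetTheoryAsOrdGradedDigraph}, together with the universality of models of set theory for countable graded binary relations established in the proof of Theorem \ref{Theorem.MisSubmodelOfL^M}.

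For the forward direction, I would start with an embedding $j:\<M,{\in^M}>\to\<N,{\in^N}>$. The wrinkle to keep in mind is that $j$ is merely an $\in$-embedding and in no way elementary, so it need not send ordinals of $M$ to ordinals of $N$; the fix is to compose with the rank function. Concretely, I claim $\alpha\mapsto\rank^N(j(\alpha))$ is a strictly order-preserving map of $\Ord^M$ into $\Ord^N$: if $\alpha<^M\beta$ are ordinals of $M$ then $\alpha\in^M\beta$, hence $j(\alpha)\in^N j(\beta)$, and hence $\rank^N(j(\alpha))<\rank^N(j(\beta))$ because membership strictly raises rank. Being strictly order-preserving, this map is in particular injective, which is exactly the desired order-preserving map of the ordinals.

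For the converse, I would start with an order-preserving map $e:\Ord^M\to\Ord^N$ and use it to re-grade $\<M,{\in^M}>$, now viewed simply as a digraph: assign to the node $x$ the value $e(\rank^M(x))\in\Ord^N$. Since $x\in^M y$ implies $\rank^M(x)<^M\rank^M(y)$ and therefore $e(\rank^M(x))<^N e(\rank^M(y))$, this is a genuine $\Ord^N$-grading, so $\<M,{\in^M}>$ is a countable $\Ord^N$-graded binary relation. The preceding corollary---that every countable model of set theory, here $\<N,{\in^N}>$, is universal for all countable $\Ord^N$-graded binary relations---then delivers an isomorphism of $\<M,{\in^M}>$ onto a submodel of $\<N,{\in^N}>$.

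I do not expect any serious obstacle here, since all the real work resides in Theorem \ref{Theorem.MisSubmodelOfL^M}; the one point demanding care is the one already noted, namely that embeddings of models of set theory are very far from elementary, which is why the rank function must be interposed in the forward direction. (The appeal to universality in the converse requires $N$ to be a countable model, which is the ambient assumption; the forward direction needs no countability of $N$.)
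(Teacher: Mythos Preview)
Your proof is correct and essentially identical to the paper's: the forward direction composes the embedding with $\rank^N$, and the converse re-grades $M$ by $x\mapsto e(\rank^M(x))$ and then invokes the preceding universality corollary. One small correction: the statement assumes only $M$ is countable, so countability of $N$ is not the ambient assumption; the paper handles this by first passing to a countable (elementary) submodel of $N$---one containing the range of $e$ will do---after which your argument goes through verbatim.
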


\begin{proof}
We needn't assume $N$ is countable here, since we may simply pass to a countable submodel. For the forward implication, if $j:M\to N$ is an $\in$-embedding, then $f(\alpha)=\rank(j(\alpha))$ is an order-preserving map from $\Ord^M$ to $\Ord^N$. Conversely, if the ordinals of $M$ map into the ordinals of $N$, then we may externally view $\<M,{\in}^M>$ as an $\Ord^N$-graded digraph, and the previous corollary shows that $\<N,{\in^N}>$ is universal for such relations.
\end{proof}

\begin{corollary}\label{Corollary.IllfoundedModelsAreUniversal}
 Every ill-founded countable model of set theory $\<M,{\in^M}>$ is universal for all countable acyclic binary relations.
\end{corollary}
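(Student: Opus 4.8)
The plan is to reduce the statement to two ingredients already in hand: the corollary to Theorem~\ref{Theorem.MisSubmodelOfL^M} that every countable model of set theory is universal for all countable $\Ord^M$-graded binary relations, and Lemma~\ref{Lemma.CycleFreeImpliesQgraded}, that every countable acyclic binary relation admits a strict $\Q$-grading. Given a countable acyclic binary relation $\<N,E>$, fix a strict $\Q$-grading $v\colon N\to\Q$, so that $aEb$ implies $v(a)<v(b)$. If $e\colon\Q\to\Ord^M$ is any order-preserving map, then $e\compose v$ exhibits $\<N,E>$ as an $\Ord^M$-graded binary relation, and hence $\<N,E>$ embeds into $\<M,{\in^M}>$ by that universality corollary. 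So the whole task comes down to producing an order-preserving copy of $\Q$ inside $\Ord^M$, using only that $M$ is ill-founded.

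First, ill-foundedness transfers to the ordinals: an externally infinite $\in^M$-descending chain has strictly $<^M$-descending von Neumann ranks, so $\<\Ord^M,{<^M}>$ is an ill-founded linear order, and we may fix an externally infinite descending sequence $\delta_0>^M\delta_1>^M\cdots$ of ordinals of $M$, each lying in its ill-founded part. The core step is to show this sequence spawns a copy of $\Q$ in $\Ord^M$. If $M$ has no limit ordinal, then $M\satisfies\neg\mathrm{Inf}$ and every ordinal of $M$ is a (possibly nonstandard) natural number, so $\<\Ord^M,{<^M}>$ is the domain of a countable nonstandard model of $\PA$ --- here one uses that the $\in$-induction scheme of $\KP$ restricts to full arithmetic induction on the ordinals --- and hence has order type $\omega+(\omega^*+\omega)\cdot\eta$ for a countable dense linear order $\eta$ without endpoints, which contains a copy of $\Q$. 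Otherwise $M$ has a limit ordinal, whence $\omega^M$ exists (as $\omega^M$ is $\Delta_0$-definable inside any limit ordinal), so $M$ can carry out ordinal exponentiation and Cantor normal form by $\Sigma_1$-recursion. Let $P$ be the set of ordinals of $M$ whose Cantor normal form has the shape $\omega^{\delta_{n_0}}k_0+\cdots+\omega^{\delta_{n_r}}k_r$ with $n_0<n_1<\cdots<n_r$ natural numbers and each $k_i$ a nonzero natural number; since $\<\delta_n>$ is descending these are genuine normal forms, so distinct data give distinct ordinals and the $<^M$-order on $P$ is the lexicographic order on the normal-form data. A short check shows $P$ has no greatest and no least element and is dense: given $p<^M q$ in $P$, the ordinal difference $q\ominus p$ has leading exponent $\delta_k$ for some index $k$ appearing in $p$ or $q$, so for $N$ larger than $k$ and than every index occurring in $p$ we get $p<^M p+\omega^{\delta_N}<^M q$ with $p+\omega^{\delta_N}\in P$. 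Thus $P$ is a countable dense linear order without endpoints, so $P\iso\Q$, and $P\of\Ord^M$ supplies the needed copy of $\Q$.

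The main obstacle I anticipate is exactly this last step --- confirming that an ill-founded $\Ord^M$ is non-scattered rather than, say, of order type $\omega\cdot\omega^*$ --- and in particular managing the case split according to whether $\omega^M$ exists and whether it is standard. The Cantor-normal-form construction handles every case in which $\omega^M$ exists (whether or not it is standard, since when $M$ is $\omega$-nonstandard the descending sequence may be taken within $\omega^M$ itself), and the remaining $\neg\mathrm{Inf}$ case reduces to the classical order-type analysis of nonstandard models of arithmetic; everything else is bookkeeping on top of the established universality of $M$ for $\Ord^M$-graded binary relations. Finally, this also re-derives that any two ill-founded countable models of set theory are bi-embeddable via Corollary~\ref{Corollary.MintoNiffOrd^MintoOrd^N}, since the ordinals of each then embed order-preservingly into those of the other through its copy of $\Q$.
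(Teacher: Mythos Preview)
Your argument is correct, but it follows a genuinely different route from the paper's. The paper splits into two cases based on whether $M$ is an $\omega$-model: if $\omega^M$ is nonstandard, it simply invokes Theorem~\ref{Theorem.NonstandardModelsAreUniversal} on $\HF^M$ and is done; if $\omega^M$ is standard but $\Ord^M$ is ill-founded, it cites Friedman's theorem that $\Ord^M\cong\lambda\cdot(1+\Q)$ for an admissible $\lambda$, which immediately supplies the copy of $\Q$ needed to apply Lemma~\ref{Lemma.CycleFreeImpliesQgraded} and the $\Ord^M$-graded universality. By contrast, you unify both cases under a single strategy---find $\Q$ inside $\Ord^M$ and then invoke graded universality---and you replace the Friedman citation with an explicit, self-contained Cantor normal form construction using the descending sequence of exponents. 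What your approach buys is independence from the external reference and a pleasantly elementary density argument; what the paper's approach buys is brevity, and in the $\omega$-nonstandard case it bypasses the search for $\Q$ altogether by going straight through $\HF^M$.

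One small caution: your assertion that in the $\neg\mathrm{Inf}$ case $\Ord^M$ is a model of full \PA\ leans on \KP\ having the full $\in$-induction scheme, which not every axiomatization of \KP\ grants. This does not damage your proof, however, since the order-type conclusion $\omega+(\omega^*+\omega)\cdot\eta$ only requires enough arithmetic to halve $a+b$, and that much is available from $\Sigma$-recursion and $\Delta_0$-separation in any standard formulation of \KP.
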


\begin{proof}
If $M$ is not an $\omega$-model, then already the hereditary finite sets $\HF^M$ of $M$ are universal by theorem \ref{Theorem.NonstandardModelsAreUniversal}. If $M$ has a standard $\omega$, but the ordinals are ill-founded, then by results in \cite{Friedman1973:CountableModelsOfSetTheories}, it follows that $\Ord^M\iso \lambda\cdot(1+\Q)$ for some admissible ordinal $\lambda$. In particular, $\Ord^M$ contains a countable dense order, and so by lemma \ref{Lemma.CycleFreeImpliesQgraded} every acyclic binary relation can be $\Ord^M$-graded. Thus, $\<M,{\in^M}>$ is universal for all such relations.
\end{proof}

\begin{theorem}[Main Theorem 2]\label{Theorem.CountableModelsLinearlyOrdered}
 The countable models of set theory are linearly pre-ordered by embeddability: for any two countable models of set theory $\<M,{\in^M}>$ and $\<N,{\in^N}>$, either $M$ is isomorphic to a submodel of $N$ or conversely. Indeed, the countable models of set theory are pre-well-ordered by embeddability in order type exactly $\omega_1+1$.
\end{theorem}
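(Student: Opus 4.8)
The plan is to reduce the entire statement to the order-theoretic behaviour of the ordinals of the models, via corollary~\ref{Corollary.MintoNiffOrd^MintoOrd^N}, which says that a countable model $\<M,{\in^M}>$ embeds into $\<N,{\in^N}>$ precisely when $\<\Ord^M,{<^M}>$ maps order-preservingly into $\<\Ord^N,{<^N}>$. First I would establish linearity by splitting into cases according to well-foundedness. If $M$ and $N$ are both well-founded, then $\Ord^M$ and $\Ord^N$ are genuine countable ordinals, say $\alpha\leq\beta$; since $\alpha$ is an initial segment of $\beta$, the identity order-embeds $\Ord^M$ into $\Ord^N$, so $M$ is isomorphic to a submodel of $N$. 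If $M$ is well-founded and $N$ is ill-founded, I would use that a model of \KP\ has no membership cycles (Foundation applied to the finite set of vertices of a putative cycle), hence is a countable acyclic binary relation; by corollary~\ref{Corollary.IllfoundedModelsAreUniversal}, the ill-founded $N$ is universal for all such relations, so $M$ embeds into $N$. If $M$ and $N$ are both ill-founded, the same observation applied in both directions makes them bi-embeddable. In every case the models are comparable, so the pre-ordering is linear.

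Next I would read off the order type of the induced well-ordering on bi-embeddability classes. Among well-founded countable models, the model of height $\alpha$ embeds into the model of height $\beta$ exactly when $\alpha\leq\beta$ (one direction is the initial-segment map; the other holds because an order-embedding of the ordinal $\alpha$ into the ordinal $\beta$ forces $\alpha\leq\beta$), and two such models are bi-embeddable exactly when their heights agree. So the well-founded classes are order-isomorphic to the set $H\of\omega_1$ of ordinals arising as heights of countable well-founded models of \KP; by the standard fact that a transitive model of \KP\ has admissible height, and that $L_\beta\satisfies\KP$ whenever $\beta$ is admissible, $H$ is exactly the set of countable admissible ordinals. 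In any event $H$ is unbounded in $\omega_1$, and since $\omega_1$ is regular an unbounded subset of $\omega_1$ has order type exactly $\omega_1$. Hence the well-founded countable models are pre-well-ordered by embeddability in order type exactly $\omega_1$.

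Finally I would situate the ill-founded models on top. All ill-founded countable models of set theory form a single bi-embeddability class, by the both-ill-founded case above, and this class lies strictly above every well-founded class: a well-founded model embeds into any ill-founded one (it is a countable acyclic binary relation and corollary~\ref{Corollary.IllfoundedModelsAreUniversal} applies), whereas no ill-founded model $N$ embeds into a well-founded $M$, since an embedding $j\colon N\to M$ would yield the order-preserving map $\alpha\mapsto\rank(j(\alpha))$ from the ill-founded linear order $\Ord^N$ into the genuine ordinal $\Ord^M$, which is impossible. Thus the bi-embeddability classes are order-isomorphic to $\omega_1$ with a single maximum element adjoined, that is, to $\omega_1+1$; in particular this is a well-order, so the pre-order is a pre-well-order of order type exactly $\omega_1+1$.

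The substantive work is all upstream, in corollary~\ref{Corollary.MintoNiffOrd^MintoOrd^N} (which rests on main theorem~\ref{Theorem.MisSubmodelOfL^M} and the surrogate-digraph construction) and in the universality of ill-founded models (corollary~\ref{Corollary.IllfoundedModelsAreUniversal}, theorem~\ref{Theorem.NonstandardModelsAreUniversal}, and Friedman's structure theory for $\omega$-standard ill-founded models \cite{Friedman1973:CountableModelsOfSetTheories}); granting those, the remainder is elementary order arithmetic. The only delicate points, and the ones I would write out carefully, are the three already flagged: that a model of \KP\ is acyclic and so counts among the countable acyclic binary relations, the identification of the realized well-founded heights with the countable admissible ordinals, and the observation that an unbounded subset of $\omega_1$ has order type exactly $\omega_1$ --- which is precisely what pins the order type down to $\omega_1+1$ rather than merely being ``$\omega_1$-like''.
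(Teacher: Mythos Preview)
Your proposal is correct and follows essentially the same route as the paper's proof: both reduce to corollaries~\ref{Corollary.MintoNiffOrd^MintoOrd^N} and~\ref{Corollary.IllfoundedModelsAreUniversal}, identify the well-founded bi-embeddability classes with the countable admissible ordinals, and place the single ill-founded class on top to obtain order type $\omega_1+1$. You supply a few details the paper leaves implicit (notably the explicit argument that no ill-founded model embeds into a well-founded one, and the regularity argument pinning down the order type), but the overall structure is identical.
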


\begin{proof}
Corollary \ref{Corollary.IllfoundedModelsAreUniversal} shows that the models having ill-founded ordinals are all universal, so that every countable model of set theory embeds into any of them. In particular, they are all bi-embeddable with each other. What remains are the well-founded models, which by corollary \ref{Corollary.MintoNiffOrd^MintoOrd^N} are ordered the same as their ordinals. In particular, any two well-founded models with the same ordinals are bi-embeddable, and any shorter model is embeddable into any taller model.

Thus, the bi-embeddability classes of the models of set theory are in one-to-one correspondence with the collection of ordinals $\lambda$ such that $L_\lambda$ is a model of set theory, plus one more bi-embeddability class at the top for the ill-founded models. Since there are $\omega_1$ many countable admissible ordinals, it follows that the order-type of the bi-embeddability classes of the countable models of set theory (understood as models of \KP) is precisely $\omega_1+1$.
\end{proof}

If one wants to consider only the countable models of \ZFC, or only the models of \ZFC\ plus large cardinals, then the argument shows that they are pre-well-ordered in order type $\eta+1$, where $\eta$ is the number of countable ordinals $\lambda$ for which there is a model of the theory having height $\lambda$. If there is an inaccessible cardinal, for example, then it follows that $\eta=\omega_1$ for \ZFC\ models and the countable models of \ZFC\ are pre-well-ordered by embeddability in order type $\omega_1+1$.

Finally, let me observe that one definitely doesn't have linearity for the embeddability relation in the context of uncountable models.

\begin{observation}
If \ZFC\ is consistent, then there are two models $M,N\satisfies\ZFC$, such that neither is isomorphic to a submodel of the other.
\end{observation}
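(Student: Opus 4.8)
The plan is to reduce the statement to an order-theoretic separation of the two models' ordinal parts. Note first that the forward direction of corollary~\ref{Corollary.MintoNiffOrd^MintoOrd^N} needs no countability hypothesis: if $j\colon\<M,{\in^M}>\to\<N,{\in^N}>$ is any $\in$-embedding, then $\alpha\mapsto\rank^N(j(\alpha))$ is a strictly increasing, hence injective, map of $\<\Ord^M,{<}>$ into $\<\Ord^N,{<}>$. So it suffices to produce two models $M,N\satisfies\ZFC$ whose ordinal orders are incomparable as linear orders, with neither order-embedding into the other. We have only $\Con(\ZFC)$ available, so in particular we may not assume that \ZFC\ has a well-founded, or even an $\omega$-standard, model; all we start with is a countable model $M_0\satisfies\ZFC$ furnished by the completeness theorem.

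For $M$ I would take a model of size $\aleph_1$ whose ordinals carry a strictly descending $\omega_1$-chain. Adjoin to the language constants $c_\alpha$ for $\alpha<\omega_1$, together with the sentences asserting that each $c_\alpha$ is an ordinal and that $c_\beta\in c_\alpha$ whenever $\alpha<\beta$. Finitely many of these sentences are satisfiable in $M_0$, since finite strictly descending sequences of ordinals are available there, so the whole theory is consistent; passing to an elementary submodel of size $\aleph_1$ that contains all the $c_\alpha$ yields a model $M\satisfies\ZFC$ of size $\aleph_1$ in which $\<c_\alpha\st\alpha<\omega_1>$ is a copy of $\omega_1^{*}$ inside $\<\Ord^M,{<}>$; in particular $|\Ord^M|=\aleph_1$.

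For $N$ I would build a model of \ZFC\ of size $\aleph_2$ whose ordinals contain \emph{no} descending $\omega_1$-chain, by taking $N=\Union_{\xi<\omega_2}N_\xi$ for a continuous elementary chain of proper elementary \emph{end-extensions}: $N_0$ a countable model of \ZFC, and $N_{\xi+1}$ a proper elementary end-extension of $N_\xi$ --- such exist over countable models by the Keisler--Morley theorem, and over the larger stages one must further arrange that the extension (taken of size at most $\aleph_1$ by a L\"owenheim--Skolem cut-down) adds no descending $\omega_1$-chain among the freshly born ordinals --- with unions at limits. Each such step does add at least one new ordinal, lying above all the old ones, so the successive new ordinals form a strictly increasing $\omega_2$-chain and $|\Ord^N|=\aleph_2$. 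Because an end-extension adds no ordinal below an old one, each $\Ord^{N_\xi}$ is an initial segment of $\Ord^N$ and of every later $\Ord^{N_\zeta}$; hence any descending $\omega_1$-chain in $\Ord^N$ would lie, from its first term onward, inside some $\Ord^{N_\xi}$. An induction on $\xi$ then rules this out: the countable stages cannot contain such a chain for cardinality reasons; at a successor stage the tail of a putative chain below its first term is again an initial segment, so the chain is trapped inside the previous $\Ord^{N_\xi}$ unless it consists entirely of ordinals added at that step, which the construction precludes; and limit stages reduce at once to an earlier one.

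Granting $M$ and $N$, neither is isomorphic to a submodel of the other: $\Ord^N$ cannot order-embed into $\Ord^M$ because $|\Ord^N|=\aleph_2>\aleph_1=|\Ord^M|$, and $\Ord^M$ cannot order-embed into $\Ord^N$ because the image of its descending $\omega_1$-chain would be a descending $\omega_1$-chain in $\Ord^N$, of which there are none; the first paragraph then gives the conclusion. The step I expect to be the main obstacle is precisely the construction of $N$: verifying that the elementary end-extensions past $\aleph_1$ can be taken at all (this is the more delicate theory of end-extensions of uncountable models of set theory), and, above all, that they can be chosen so as never to introduce a descending $\omega_1$-chain among the newly born ordinals. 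The countable portion of the construction is routine; the uncountable portion is where the work lies, and any alternative source of a model of \ZFC\ of size $\aleph_2$ whose ordinal order is ``$\aleph_1$-well-founded'' would do just as well.
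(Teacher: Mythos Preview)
Your reduction in the first paragraph is correct and useful: an $\in$-embedding $j:M\to N$ yields the order-preserving map $\alpha\mapsto\rank^N(j(\alpha))$ on ordinals, with no countability needed. The construction of $M$ via compactness is also fine.

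The genuine gap is exactly where you locate it, and it is more serious than you indicate. The Keisler--Morley theorem is specifically a theorem about \emph{countable} models; uncountable models of set theory need not admit proper elementary end-extensions at all (Kaufmann's rather classless $\omega_1$-like models are the standard obstruction). So the chain may simply halt at stage $\omega_1$, and there is no evident way to push it to length $\omega_2$ from $\Con(\ZFC)$ alone. Even granting the extensions, arranging that the block of newly born ordinals at each uncountable stage carries no descending $\omega_1$-chain is a further non-trivial demand; what you are really asking for is a model of size $\aleph_2$ whose ordinal order is $\aleph_1$-well-founded, and it is not clear such a thing follows from mere consistency of \ZFC.

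The paper avoids this entirely by staying at size $\aleph_1$ and working directly with the $\in$-structure rather than passing through ordinal embeddability. It takes $N$ to be an $\omega_1$-like model (uncountable, every element with only countably many $\in$-predecessors, hence cofinality $\omega_1$), obtained by the routine $\omega_1$-length Keisler--Morley iteration over a countable model. It takes $M$ to be an uncountable model in which every cut has cofinality $\omega$. Then $M$ cannot embed into $N$ because $M$ has elements with uncountably many $\in$-predecessors while no element of $N$ does; and $N$ cannot embed into $M$ because the supremum of the image would be a cut of cofinality $\omega_1$ in $M$, of which there are none. Both models live at $\aleph_1$, both constructions are standard, and no end-extension of an uncountable model is ever required.
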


\begin{proof}
If \ZFC\ is consistent, then on the one hand, there is an uncountable model $M\satisfies\ZFC$ with cofinality $\omega$ and with every cut
of cofinality $\omega$. On the other hand, one may also build an $\omega_1$-like model
$N\satisfies\ZFC$, so $N$ is uncountable, but every initial segment is countable. It follows that $N$ has cofinality $\omega_1$.

Note that $M$ cannot embed into $N$, since there are elements of $M$ with uncountably many predecessors, but every element of $N$ has only
countably many predecessors. And $N$ cannot embed into $M$, since the supremum of the image would be a cut of uncountable cofinality.
\end{proof}

\section{Questions}

Although the main theorem shows that every countable model of set theory embeds into its own constructible universe
 $$j:M\to L^M,$$
this embedding $j$ is constructed completely externally to $M$ and there is little reason to expect that $j$ could be a class in $M$ or otherwise amenable to $M$.  To what extent can we prove or refute the possibility that $j$ is a class in $M$? This amounts to considering the matter internally as a question about $V$. Surely it would seem strange to have a class embedding $j:V\to L$ when $V\neq L$, even if it is elementary only for quantifier-free assertions, since such an embedding is totally unlike the sorts of embeddings that one usually encounters in set theory. Nevertheless, I haven't been able to refute hypothesis, and the possibility that there could be such an embedding is intriguing.

\begin{question}\label{Question.j:VtoL?}
 Can there be an embedding $j:V\to L$ when $V\neq L$?
\end{question}

By embedding, I mean an isomorphism from $\<V,{\in}>$ to its range in $\<L,{\in}>$, a map $j:V\to L$ for which $x\in y\Iff j(x)\in j(y)$. (Note that if $V=L$, then we have some easily defined non-identity embeddings $j:L\to L$ arising as in theorem \ref{Theorem.j:VtoV}.) Question \ref{Question.j:VtoL?} is most naturally formalized in \Godel-Bernays set theory, asking whether there can be a \GB-class $j$ forming such an embedding. If one wants $j:V\to L$ to be a definable class, then this of course implies $V=\HOD$, since the definable $L$-order can be pulled back to $V$, via $x\leq y\Iff j(s)\leq_L j(y)$. More generally, if $j$ is merely a class in \Godel-Bernays set theory, then the existence of an embedding $j:V\to L$ implies global choice, since from the class $j$ we can pull back the $L$-order. For these reasons, we cannot expect every model of \ZFC\ or of \GB\ to have such embeddings. Can they be added generically? Do they have some large cardinal strength? Are they outright refutable?

It they are not outright refutable, then it would seem natural that these questions might involve large cardinals; perhaps $0^\sharp$ is relevant. But I am unsure which way the answers will go. The existence of large cardinals provides extra strength, but may at the same time make it harder to have the embedding, since it pushes $V$ further away from $L$. For example, it is conceivable that the existence of $0^\sharp$ will enable one to construct the embedding, using the Silver indiscernibles to find a universal submodel of $L$; but it is also conceivable that the non-existence of $0^\sharp$, because of covering and the corresponding essential closeness of $V$ to $L$, may make it easier for such a $j$ to exist. Or perhaps it is simply refutable in any case. The first-order analogue of the question is:

\begin{question}
 Can every set $A$ admit an embedding $j:\<A,{\in}>\to \<L,{\in}>$, even when $V\neq L$?
\end{question}

The main theorem shows that every countable set $A$ embeds into $L$. What about uncountable sets? Let us make the question extremely concrete:

\begin{question}
 Can $\<V_{\omega+1},{\in}>$ embed into $\<L,{\in}>$ when there are non-constructible reals? How about $\<P(\omega),{\in}>$ or $\<\HC,{\in}>$?
\end{question}

It is also natural to inquire about the nature of $j:M\to L^M$ even when it is not a class in $M$. For example, can one find such an embedding for which $j(\alpha)$ is an ordinal whenever $\alpha$ is an ordinal?  The embedding arising in the proof of theorem \ref{Theorem.MisSubmodelOfL^M} definitely does not have this feature, since every set in $M$ is mapped to a node in $\Theta$, which is mapped ultimately to $\pi(v)=\set{\pi(w)\st w\ttoward v}\union\singleton{\singleton{\emptyset,v}}$, and this latter set is never an ordinal.

\begin{question}
 Does every countable model $\<M,{\in^M}>$ of set theory admit an embedding $j:M\to L^M$ that takes ordinals to ordinals?
\end{question}

Probably one can arrange this simply by being a bit more careful with the modified Mostowski procedure in lemma \ref{Lemma.RealizingWellfoundedDigraphsAsSets}, as it is applied to $\Theta$ in the proof of the main theorem, theorem \ref{Theorem.MisSubmodelOfL^M}. And if this is correct, then numerous further questions immediately come to mind, concerning the extent to which we ensure more attractive features for the embeddings $j$ that arise in the main theorems. This will be particularly interesting in the case of well-founded models, as well as in the case of $j:V\to L$, as in question \ref{Question.j:VtoL?}, if that should be possible.

\begin{question}
 Can there be a nontrivial embedding $j:V\to L$ that takes ordinals to ordinals?
\end{question}

Finally, I inquire about the extent to which the main theorems of this article can be extended from the countable models of set theory to the $\omega_1$-like models:

\begin{question}\label{Question.Omega1Like}
 Does every $\omega_1$-like model of set theory $\<M,{\in^M}>$ admit an embedding $j:M\to L^M$ into its own constructible universe? Are the $\omega_1$-like models of set theory linearly pre-ordered by embeddability?
\end{question}

Some of these questions are now answered in part by an observation of Menachem Magidor, who proved that if $x^\sharp$ exists for every set $x$, then there is no nontrivial embedding $j:V\to L$. In further work, we've observed that in the forcing extension $V[G]$ obtained by adding $\kappa^+$ many Cohen subsets to $\kappa$, there is no embedding $j:V[G]\to L$, and indeed, no embedding $j:P(\kappa)\to L$. In particular, it is consistent with \ZFC\ that there is no embedding of $\<P(\omega),{\in}>$ into $\<L,{\in}>$. Answering the latter part of question \ref{Question.Omega1Like}, Victoria Gitman and I have now constructed $\omega_1$-like models of \ZFC\ that are incomparable by embeddability.

\bibliographystyle{alpha}
\bibliography{MathBiblio,HamkinsBiblio}

\begin{thebibliography}{HKP12}

\bibitem[Acz88]{Aczel1988:Non-well-foundedSets}
Peter Aczel.
\newblock {\em Non-well-founded sets}.
\newblock Center for the Study of Language and Information, Stanford, CA, 1988.

\bibitem[Del11]{MSE64095Delanoy:ComparingCountableModelsOfZFC}
Ewan Delanoy.
\newblock Comparing countable models of {ZFC}.
\newblock Mathematics, 2011.
\newblock URL:http://math.stackexchange.com/q/64095 (version: 2011-09-13).

\bibitem[ESV11]{EnayatSchmerlVisser2011:OmegaModelsOfFiniteSetTheory}
A.~Enayat, J.~Schmerl, and A.~Visser.
\newblock $\omega$-models of finite set theory.
\newblock In J.~Kennedy and R.~Kossak, editors, {\em Set theory, Arithmetic,
  and Foundations of Mathematics: Theorems, Philosophies}, number~36 in Lecture
  Notes in Logic, chapter~4. Cambridge University Press, 2011.

\bibitem[Fri73]{Friedman1973:CountableModelsOfSetTheories}
Harvey Friedman.
\newblock Countable models of set theories.
\newblock {\em Cambridge Summer School in Mathematical Logic (1971), Lecture
  Notes in Math.}, 337:pp. 539--573, 1973.

\bibitem[Hau02]{Hausdorff2002:GesammelteWerke}
Felix Hausdorff.
\newblock {\em Felix {H}ausdorff---gesammelte {W}erke. {B}and {II}}.
\newblock Springer-Verlag, Berlin, 2002.
\newblock ``Grundz{\"u}ge der Mengenlehre''. [``Foundations of set theory''],
  Edited and with commentary by E. Brieskorn, S. D. Chatterji, M. Epple, U.
  Felgner, H. Herrlich, M. Hu{\v{s}}ek, V. Kanovei, P. Koepke, G. Preu{\ss}, W.
  Purkert and E. Scholz.

\bibitem[HKP12]{HamkinsKirmayerPerlmutter2012:GeneralizationsOfKunenInconsistency}
Joel~David Hamkins, Greg Kirmayer, and Norman~Lewis Perlmutter.
\newblock Generalizations of the {Kunen} inconsistency.
\newblock {\em Annals of Pure and Applied Logic}, 163(12):1872 -- 1890, 2012.

\bibitem[Hod93]{Hodges1993:ModelTheory}
Wilfrid Hodges.
\newblock {\em Model theory}, volume~42 of {\em Encyclopedia of Mathematics and
  its Applications}.
\newblock Cambridge University Press, Cambridge, 1993.

\bibitem[Kir10]{Kirby2010:SubstandardModelsOfFiniteSetTheory}
Laurence Kirby.
\newblock Substandard models of finite set theory.
\newblock {\em Mathematical Logic Quarterly}, 56(6):631--642, 2010.

\bibitem[Kun71]{Kunen1971:ElementaryEmbeddingsAndInfinitaryCombinatorics}
Kenneth Kunen.
\newblock Elementary embeddings and infinitary combinatorics.
\newblock {\em Journal of Symbolic Logic}, 36:407--413, 1971.

\bibitem[KW07]{KayeWong2007:OnInterpreationsOfArithmeticAndSetTheory}
Richard Kaye and Tin~Lok Wong.
\newblock On interpretations of arithmetic and set theory.
\newblock {\em Notre Dame J. Formal Logic}, 48(4):497--510, 2007.

\bibitem[McA82]{McAloon1982:OnTheComplexityOfModelsOfArithmetic}
Kenneth McAloon.
\newblock On the complexity of models of arithmetic.
\newblock {\em The Journal of Symbolic Logic}, 47(2):pp. 403--415, 1982.

\bibitem[Par84]{Paris1984:OnTheStructureOfBoundedE1Induction(Cech)}
Jeff~B. Paris.
\newblock On the structure of models of bounded {$E_1$}-induction.
\newblock {\em \v Casopis P\v est. Mat.}, 109(4):372--379, 1984.

\bibitem[Res86]{Ressayre1983:IntroductionToRecursivelySaturatedModels}
J.~P. Ressayre.
\newblock Introduction aux mod\`eles r\'ecursivement satur\'es.
\newblock In {\em S\'eminaire {G}\'en\'eral de {L}ogique 1983--1984 ({P}aris,
  1983--1984)}, volume~27 of {\em Publ. Math. Univ. Paris VII}, pages 53--72.
  Univ. Paris VII, Paris, 1986.

\end{thebibliography}
\end{document}